\newtheorem{thm}{Theorem}[section]
\newtheorem{prop}[thm]{Proposition}
\newtheorem{lem}[thm]{Lemma}
\newtheorem{cor}[thm]{Corollary}
\theoremstyle{definition}
\theoremstyle{claim}
\theoremstyle{remark}
\newtheorem{remark}[thm]{Remark}
\numberwithin{equation}{section}
\DeclareMathOperator{\dist}{dist} 
\begin{document}


\title{Mountain Pass Energies Between Homotopy Classes of Maps}

\author{Daniel Stern}
\address{Department of Mathematics, Princeton University, 
Princeton, NJ 08544}
\email{dls6@math.princeton.edu}

\begin{abstract} For non-homotopic maps $u,v\in C^{\infty}(M,N)$ between closed Riemannian manifolds, we consider the smallest energy level $\gamma_p(u,v)$ for which there exist paths $u_t\in W^{1,p}(M,N)$ connecting $u_0=u$ to $u_1=v$ with $\|du_t\|_{L^p}^p\leq \gamma_p(u,v)$. When $u$ and $v$ are $(k-2)$-homotopic, work of Hang and Lin shows that $\gamma_p(u,v)<\infty$ for $p\in [1,k)$, and using their construction, one can obtain an estimate of the form $\gamma_p(u,v)\leq \frac{C(u,v)}{k-p}$. When $M$ and $N$ are oriented, and $u$ and $v$ induce different maps on real cohomology in degree $k-1$, we show that the growth $\gamma_p(u,v)\sim \frac{1}{k-p}$ as $p\to k$ is sharp, and obtain a lower bound for the coefficient $\liminf_{p\to k}(k-p)\gamma_p(u,v)$ in terms of the min-max masses of certain non-contractible loops in the space of codimension-$k$ integral cycles in $M$. In the process, we establish lower bounds for a related smaller quantity $\gamma_p^*(u,v)\leq\gamma_p(u,v)$, for which there exist critical points $u_p\in W^{1,p}(M,N)$ of the $p$-energy functional satisfying $\gamma_p^*(u,v)\leq \|du_p\|_{L^p}^p\leq \gamma_p(u,v).$

\end{abstract}

\maketitle



\section{Introduction}

\hspace{3mm} Let $M^n$ and $N$ be closed Riemannian manifolds, with $N\subset \mathbb{R}^L$ isometrically embedded in some higher-dimensional Euclidean space. For $p\geq 1$, the space $W^{1,p}(M,N)$ of Sobolev maps from $M$ to $N$ is defined by
$$W^{1,p}(M,N):=\{u\in W^{1,p}(M,\mathbb{R}^L)\mid u(x)\in N\text{ for almost every }x\in M\}.$$

\hspace{3mm} In \cite{HangLin}, Hang and Lin characterized the path components of $W^{1,p}(M,N)$ in the low regularity setting $p\in [1,n)$, showing that two maps $u$ and $v$ lie in the same component of $W^{1,p}(M,N)$ if and only if they are $\lfloor p-1\rfloor$-homotopic--i.e., if their restrictions to the $\lfloor p-1\rfloor$-skeleton of a generic triangulation of $M$ are homotopic in the usual sense. The results of \cite{HangLin} build on earlier work of White, who showed in \cite{Wh2} that maps in $W^{1,p}(M,N)$ have well-defined $\lfloor p-1\rfloor$-homotopy classes that are closed under bounded weak convergence, and deduced that each $\lfloor p-1\rfloor$-homotopy class contains a representative minimizing the $p$-energy
$$E_p(u):=\int_M|du|^p.$$

\hspace{3mm} As a corollary of the results in \cite{HangLin}, Hang and Lin confirm a conjecture of Brezis and Li (\cite{BL}, Conjecture 2), which posits that, as $p$ varies in $[1,n)$, the spaces $W^{1,p}(M,N)$ undergo a ``change of topology"--i.e., a change in the number of path components--only when $p$ passes through integer thresholds. The results of the present paper are motivated by a desire to obtain a more quantitative understanding of this ``change of topology" phenomenon, with the aim of gaining new insight into how the topology of $M$ and $N$ influences the variational landscape of the $p$-energy functionals.

\hspace{3mm} Fix two maps $u,v\in C^{\infty}(M,N)$, and let $2\leq k\leq n=\dim M$ be the largest integer such that $u$ and $v$ are homotopic on the $(k-2)$-skeleton of some--hence, any (see \cite{HangLin}, Section 2.2)--triangulation of $M$. By the results of \cite{HangLin} and \cite{Wh2}, we then see that $u$ and $v$ lie in a common path component of $W^{1,p}(M,N)$ if and only if $p<k$. For $p$ close to $k$, we would like to characterize those energy levels $c>0$ for which the maps $u$ and $v$ lie in a common path component of the energy sublevel set
$$E_p^c:=\{w\in W^{1,p}(M,N)\mid \|dw\|_{L^p}^p\leq c\}.$$
That is, we are interested in estimating the mountain-pass energies 
\begin{equation}
\gamma_p(u,v):=\inf\{c>0\mid \exists\text{ a path }u_t\in E_p^c\text{ connecting }u\text{ to }v\};
\end{equation}
in the limit $p\to k$ where the change of topology occurs.

\hspace{3mm} First, we observe that a careful examination of the path constructed by Hang and Lin in the proof of (\cite{HangLin}, Theorem 1.1) yields the following upper bound. (Since these estimates are not explicitly addressed in \cite{HangLin}, we provide a proof in Section \ref{upperbds} of the appendix.)
\begin{thm}\label{ubdsthm} There exists a constant $C=C(u,v)<\infty$ such that
\begin{equation}
\gamma_p(u,v)\leq\frac{C}{k-p}
\end{equation}
for every $p\in [1,k)$.
\end{thm}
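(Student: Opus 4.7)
The plan is to revisit the path construction in Hang--Lin's proof of their Theorem 1.1 in \cite{HangLin} and carefully track how the $p$-energy along the path depends on $p$ near $k$. The $1/(k-p)$ blow-up stems entirely from the elementary scaling estimate
\[
\int_{B_r^k} \bigl|d(\phi(x/|x|))\bigr|^p \, dx \leq C \int_0^r s^{k-1-p} \, ds = \frac{Cr^{k-p}}{k-p},
\]
valid for any smooth $\phi: S^{k-1} \to N$, which controls the $p$-energy of ``bubble insertions'' in $k$-dimensional normal slices.

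The path is assembled in three stages. First, I would fix a fine smooth triangulation $T$ of $M$ and use smooth cellular approximations to connect $u, v$ to normal-form cellular representatives $\tilde u, \tilde v$; being smooth, this stage contributes $p$-energy bounded uniformly in $p\in [1,k]$. Second, using the given $(k-2)$-homotopy $H: M_{k-2}\times [0,1]\to N$, extended by nearest-point retraction to a tubular neighborhood $U$ of the $(k-2)$-skeleton $M_{k-2}$, I would construct paths from $\tilde u, \tilde v$ to auxiliary maps $u^*, v^*$ that agree on $U$; since $M_{k-2}$ has codimension at least $2$ in $M$, the $p$-energy contributed here is likewise bounded uniformly in $p$ close to $k$.

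The essential third stage takes place on the complement of $U$, which decomposes as a disjoint union of small tubular neighborhoods $V\cong D^{k-1}\times D^{n-k+1}$ of the $(k-1)$-cells of $T$. On each such $V$, the restrictions of $u^*$ and $v^*$ can be arranged to differ by a bubble $\beta \in \pi_{k-1}(N)$ localized at a single interior point of a $k$-dimensional normal slice. To bridge them I would insert a one-parameter family built from a radial representative $\phi(x/|x|)$ of $\beta$, cut off outside a ball of fixed small radius $r$ and interpolated continuously in a scale parameter $\lambda \in (0, r]$. The displayed identity then bounds the $p$-energy of each bubble insertion by $Cr^{k-p}/(k-p)$. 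Summing over the finite set of $(k-1)$-cells of $T$ and choosing $r$ small but $p$-independent gives the claimed bound $\gamma_p(u,v) \leq C(u,v)/(k-p)$.

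The principal technical obstacle is ensuring continuity of the assembled path in the strong $W^{1,p}$ topology at \emph{every} intermediate time, not merely at the endpoints, while keeping the maximum $p$-energy bounded by $C/(k-p)$. This requires a concrete cutoff interpolation --- most naturally in the scale parameter $\lambda$, matched smoothly with the ambient map --- that continuously glues each bubble insertion to $u^*$ (or $v^*$) while preserving the $N$-valued constraint. Once such a parametrization is fixed along the lines of \cite{HangLin}, the radial scaling estimate above controls the energy at each time and yields the upper bound directly.
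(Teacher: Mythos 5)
Your plan follows the same route as the paper's appendix proof (Proposition \ref{goodpath}): reduce via the $(k-2)$-homotopy to a cell-by-cell modification on the $(k-1)$-cells of a cubeulation of $M$, and extract the $\frac{1}{k-p}$ bound from the same radial scaling identity you display (this is exactly (\ref{enercomp}) in the proof of Lemma \ref{philem}). One small wrinkle to keep in mind: for $n>k$ the singular set created by each ``bubble insertion'' is $(n-k)$-dimensional rather than a single point, and the paper accommodates this automatically by performing the deformation on the $k$-dimensional star neighborhoods of $(k-1)$-cells inside the $k$-skeleton and then composing with the retraction $\Phi_{k+1}$ to the $k$-skeleton, which propagates the construction to all of $M$ with only a uniformly bounded multiplicative loss in the $p$-energy.
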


\hspace{3mm} Our main result shows that, when $M$ and $N$ are oriented, and $u$ and $v$ induce different maps on the real cohomology $u^*,v^*: H^{k-1}(N;\mathbb{R})\to H^{k-1}(M;\mathbb{R})$, the growth $\gamma_p(u,v)\sim \frac{1}{k-p}$ is in fact optimal, with an explicit lower bound on the coefficient 
$$\liminf_{p\to k}(k-p)\gamma_p(u,v).$$
In fact, we establish a lower bound for a possibly smaller quantity $\gamma_p^*(u,v)\leq \gamma_p(u,v)$, defined roughly as the smallest energy level $c>0$ for which $u$ and $v$ can be connected by sequences $u=u_0,u_1,\ldots,u_{r-1},u_r=v$ in the sub-level set $E_p^c$ for which adjacent maps $u_i,u_{i+1}$ are arbitrarily close in $L^p$ norm. (See Section \ref{lbdspf} below for a careful definition.)

\hspace{3mm} To state the lower bound precisely, we briefly introduce some relevant notation (to be defined in greater detail in Sections \ref{prelims}-\ref{lbdssec}). First, we denote by $\mathcal{A}^{k-1}(N)$ the space of closed $(k-1)$-forms on $N$ with the property that
$$\langle \alpha,\Sigma\rangle\in \mathbb{Z}$$
for every integral $(k-1)$-cycle $\Sigma$ in $N$, and for $\alpha\in \mathcal{A}^{k-1}(N)$, we use $S_{\alpha}(v)-S_{\alpha}(u)$ to denote the dual $(n+1-k)$-current associated to $v^*(\alpha)-u^*(\alpha)$ by
$$\langle S_{\alpha}(v)-S_{\alpha}(u),\zeta\rangle:=\int_M(v^*(\alpha)-u^*(\alpha))\wedge \zeta\text{\hspace{2mm} for }\zeta\in \Omega^{n+1-k}(M).$$

\hspace{3mm} Next, we recall from Almgren's dissertation \cite{A1} that there exists an isomorphism
$$\Phi: \pi_1(\mathcal{Z}_m(M;\mathbb{Z}),\{0\})\to H_{m+1}(M;\mathbb{Z})$$
relating loops in the space of integral $m$-cycles (with the flat topology) to integral $(m+1)$-homology classes in $M$. In Section \ref{almsubsec} below, we define for each $\xi\in H_{m+1}(M;\mathbb{Z})$ a min-max width ${\bf L}_m(\xi)>0$, which corresponds roughly to the min-max mass
$$\inf\{\sup_{t\in S^1}\mathbb{M}(\gamma(t))\mid \gamma: S^1\to \mathcal{Z}_m(M;\mathbb{Z}),\text{ }\Phi(\gamma)=\xi\}$$
associated to the class $\Phi^{-1}(\xi)\in \pi_1(\mathcal{Z}_m(M;\mathbb{Z}),\{0\})$. For any \emph{real} homology class $\overline{\xi}\in H_{m+1}(M;\mathbb{R})$ that can be represented by integral cycles, we then define 
$${\bf L}_{m,\mathbb{R}}(\overline{\xi}):=\min\{{\bf L}_m(\xi)\mid \xi \in H_{m+1}(M;\mathbb{Z}),\text{ }\xi\equiv \overline{\xi}\text{ in }H_{m+1}(M;\mathbb{R})\}.$$
Our main theorem then reads as follows.

\begin{thm}\label{lbdsthm} For any $\alpha\in \mathcal{A}^{k-1}(N)$ and maps $u,v\in C^{\infty}(M,N)$ such that 
$$[u^*(\alpha)-v^*(\alpha)]\neq 0\in H^{k-1}_{dR}(M),$$
there is a constant $\lambda(\alpha)<\infty$ such that
\begin{equation}
\lambda(\alpha)^{\frac{k}{k-1}}\liminf_{p\to k}(k-p)\gamma_p^*(u,v)\geq \sigma_{k-1}{\bf L}_{n-k,\mathbb{R}}([S_{\alpha}(v)-S_{\alpha}(u)]).
\end{equation}
\end{thm}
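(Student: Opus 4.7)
The plan is to convert an admissible near-optimal path for $\gamma_p^*(u,v)$---in practice a discrete sequence $u=u_0,u_1,\ldots,u_r=v$ in $E_p^c$ with consecutive $L^p$-distances arbitrarily small and $c$ slightly exceeding $\gamma_p^*(u,v)$---into a continuous loop $\gamma_p\colon S^1\to \mathcal{Z}_{n-k}(M;\mathbb{Z})$ of integer codimension-$k$ cycles whose Almgren homology class $\Phi(\gamma_p)$ projects onto $[S_\alpha(v)-S_\alpha(u)]\in H_{n-k+1}(M;\mathbb{R})$, and whose maximum cycle mass is controlled by the $p$-energy bound $c$. Applying the definition of $\mathbf{L}_{n-k,\mathbb{R}}$ and letting $c\to\gamma_p^*(u,v)$ and then $p\to k$ will produce the asserted lower bound.

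To build the loop, I interpolate between consecutive $u_i,u_{i+1}$ using the straight-line homotopy in $\mathbb{R}^L$ composed with the nearest-point projection onto $N$, valid wherever the interpolant lies in a fixed tubular neighborhood of $N$. On the ``bad sets'' where the projection is undefined (small in measure, thanks to the $L^p$-closeness of $u_i$ to $u_{i+1}$ together with the uniform $W^{1,p}$-bound) I insert ``bubble'' corrections to resolve the topology. Concatenating these interpolations with a reference smoothing that returns $v$ to $u$ without affecting the degree-$(k-1)$ cohomological information produces a map $H\colon M\times S^1\to N$. Representing the integer cohomology class $[\alpha]\in H^{k-1}(N;\mathbb{Z})$ by a Lipschitz geometric model (e.g.\ a map to $K(\mathbb{Z},k-1)$) with Lipschitz scale $\lambda(\alpha)$, the Thom--Pontryagin preimage of a regular value of $(f\circ H, \pi_{S^1})$ is an integer $(n{+}1{-}k)$-chain in $M\times S^1$; slicing by the $S^1$-factor yields the desired loop $\gamma_p(s)$. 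The identification of $\Phi(\gamma_p)$ with $[S_\alpha(v)-S_\alpha(u)]$ in real homology follows from the naturality of the Thom--Pontryagin construction under Poincar\'e duality, using that the reference smoothing contributes no discrepancy in degree $k-1$.

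The mass of each slice is estimated from the pointwise inequality $|u_i^*\alpha|\leq \lambda(\alpha)|du_i|^{k-1}$ via a coarea slicing inequality, giving $\mathbb{M}(\gamma_p(s))\leq \lambda(\alpha)\|du_{i(s)}\|_{L^{k-1}}^{k-1}$, which by H\"older's inequality is bounded by $\lambda(\alpha)|M|^{(p-k+1)/p}c^{(k-1)/p}$. In the limit $p\to k$, the remaining contribution from the bubble corrections enters with coefficient $\sigma_{k-1}/(k-p)$ per unit of topology, reflecting the fact that the minimum $p$-energy of a ``hedgehog'' bubble $\phi(x)=x/|x|$ from $B^k$ into an $S^{k-1}$-class in $N$ scales precisely as $\sigma_{k-1}/(k-p)$ as $p\to k$. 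Combining these ingredients with the bound $\mathbf{L}_{n-k,\mathbb{R}}([S_\alpha(v)-S_\alpha(u)])\leq \max_s\mathbb{M}(\gamma_p(s))$ and carrying out the limit yields the inequality. The principal obstacle is the bubble analysis: one must verify that the singular interpolations yield bona fide integer slices assembling into a flat-continuous loop with the correct Almgren class, and must match the bubble mass contribution to the sharp $\sigma_{k-1}(k-p)^{-1}$ asymptotic. Since the entire $(k-p)^{-1}$ dependence of the inequality is driven by these bubbles, any slack in their energy accounting would degrade the coefficient in the lower bound.
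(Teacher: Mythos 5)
Your overall skeleton—turn a near-optimal $\delta$-fine discrete path into a family of codimension-$k$ integral cycles, identify the associated Almgren class with $[S_\alpha(v)-S_\alpha(u)]$ in real homology, and compare maximal cycle mass to $\mathbf{L}_{n-k,\mathbb{R}}$—is the same as the paper's, and the qualitative observation that the $p$-energy of a radial bubble scales like $\sigma_{k-1}/(k-p)$ is exactly the heuristic driving the sharp constant. The approach of interpolating on $M\times S^1$ and slicing is also spiritually close to what happens in the paper's Lemma \ref{hilem}. However, there is a genuine and fatal gap in the key quantitative step: the mass estimate.

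Your bound $\mathbb{M}(\gamma_p(s))\leq \lambda(\alpha)\|du_{i(s)}\|_{L^{k-1}}^{k-1}\leq \lambda(\alpha)|M|^{(p-k+1)/p}c^{(k-1)/p}$ diverges as $p\to k$: with $c\sim \Lambda/(k-p)$, the right-hand side is of order $(k-p)^{-(k-1)/k}\to\infty$. You therefore do not have a uniform mass bound for the loop of cycles, and cannot compare against $\mathbf{L}_{n-k,\mathbb{R}}$. The essential point—and the bulk of the paper's proof—is that the mass of the homological singularity $T_\alpha(u_i)$ (or rather of its integral-cycle approximation $T_i$) is controlled by $(k-p)E_p(u_i)$ with the \emph{sharp} constant $\lambda(\alpha)^{k/(k-1)}/\sigma_{k-1}$, not by $\|du_i\|_{L^{k-1}}^{k-1}$. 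This is precisely the content of Propositions \ref{jerrprop1}, \ref{abodgest}, \ref{epapprox}, and \ref{massbd} in the paper: a Jerrard--Soner/Sandier--Serfaty-style ball-growth argument (Lemma \ref{ballgrowth}) shows that topological degree on $k$-dimensional slices costs at least $\sim \sigma_{k-1}\lambda(\alpha)^{-p/(k-1)}(k-p)^{-1}$ in $p$-energy, and a blow-up argument upgrades this to the sharp density comparison $\sigma_{k-1}D_\mu|T|\leq\lambda(\alpha)^{k/(k-1)}$ for the limiting cycle and energy measure. The coarea/H\"older estimate you propose does not see the $1/(k-p)$ concentration cost and cannot be repaired by the informal ``bubble enters with coefficient $\sigma_{k-1}/(k-p)$'' remark without the full lower-bound machinery; that remark is the guess, not the proof. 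Separately, the Thom--Pontryagin slicing of a smooth(ish) $H:M\times S^1\to N$ produces Poincar\'e duals of $[u_s^*\alpha]$ rather than the distributional singularities $T_\alpha(u_s)$, and controlling their masses and their flat-continuity enough to identify the Almgren class (as the paper does via Lemma \ref{hilem} and Lemma \ref{fillinlem}) requires a careful accounting of the radial-extension singularities and the mass defect between $T_\alpha(u_i)$ and its integral replacement $T_i$; this is also glossed over.
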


\begin{remark} Here, $\sigma_{k-1}$ denotes the $(k-1)$-volume of the standard unit $(k-1)$-sphere. The definition of $\lambda(\alpha)$ is given in Section \ref{degests}; for now we only remark that $\lambda(\alpha)$ is easy to estimate for specific choices of target $N$ and $\alpha\in \mathcal{A}^{k-1}(N)$. When $N=S^{k-1}$ is the standard $(k-1)$-sphere and $\alpha=\frac{dvol}{\sigma_{k-1}}$, for example, one has $\lambda(\alpha)=(k-1)^{\frac{1-k}{2}}$.
\end{remark}

\hspace{3mm} Though the details of the proof are somewhat delicate, the intuition underlying Theorem \ref{lbdsthm} is relatively straightforward. For any map $w\in W^{1,p}(M,N)$, $p\in (k-1,k)$, the pullback $w^*(\alpha)$ is well-defined as a $(k-1)$-form with coefficients in $L^1$; as in (\cite{GMS2}, Section 5.4.2), we can then define an $(n-k)$-current $T_{\alpha}(w)$ corresponding to the distributional exterior derivative of $w^*(\alpha)$. In Section \ref{limthms}, we develop a compactness theory for the so-called homological singularities $T_{\alpha}(w_p)$ for families of maps with $(k-p)E_p(w_p)\leq \Lambda$ as $p\to k$ (based largely on ideas from the $\Gamma$-convergence results of \cite{ABOvar} and \cite{JSgl} for functionals of Ginzburg-Landau type), showing that the currents $T_{\alpha}(w_p)$ converge subsequentially in $(C^1)^*$ to an integral $(n-k)$-cycle, whose mass we can bound explicitly in terms of the limiting energy $\liminf_{p\to k}(k-p)E_p(w_p)$. (In fact, a much more careful description of the convergence of $T_{\alpha}(w_p)$ is necessary for our applications.)

\hspace{3mm} To an $L^p$-fine sequence $u=u_0,u_1,\ldots,u_r=v$ of maps with $p$ close to $k$ and energy bounded above by $\gamma_p^*(u,v)+\epsilon$, we can then associate a family of integral $(n-k)$-cycles $0=T_0,T_1,\ldots,T_r=0$, with mass bounded in terms of $(k-p)\gamma_p^*(u,v)$. Using results of Sections \ref{homsingbasics} and \ref{limthms}, and some additional technical lemmas, we then show that the difference of adjacent cycles $T_{i-1},T_i$ in this family can be written $T_{i}-T_{i-1}=\partial S_i$ for integral $(n+1-k)$-currents $S_i$ of small mass, such that
$$[\Sigma_{i=1}^rS_i]\equiv S_{\alpha}(v)-S_{\alpha}(u)\text{ in }H_{n+1-k}(M;\mathbb{R}).$$
The conclusion of Theorem \ref{lbdsthm} follows from these observations.

\hspace{3mm} We suspect that the lower bound
\begin{equation}\label{maybebd}
\liminf_{p\to k}(k-p)\gamma_p^*(u,v)>0
\end{equation}
holds for non-$(k-1)$-homotopic maps $u,v\in C^{\infty}(M,N)$ in much greater generality. If $N$ is simply connected and $k>2$, for example, one might try to show this by associating to each path $u_t\in W^{1,p}(M,N)$ from $u$ to $v$ a loop of ``topological singularities" $T(u_t)$ given by flat $(n-k)$-cycles with coefficients in $\pi_{k-1}(N)$, with limiting behavior as $p\to k$ similar to that described in Section \ref{limthms} for the homological singularities considered here. The non-contractibility of the resulting loops in $\mathcal{Z}_{n-k}(M;\pi_{k-1}(N))$ may then be related to the nontriviality of classical cohomological obstructions to extending a given $(k-2)$-homotopy to a $(k-1)$-homotopy (see, e.g., \cite{Hu}, Sections VI.4 and VI.8). If the target $N$ is $(k-2)$-connected--and $\pi_1(N)$ is abelian, if $k=2$--the analysis of topological singularities pursued by Pakzad-Rivi\`{e}re \cite{PR} and Canevari-Orlandi \cite{CO} may provide a useful starting point for investigations in this direction. For general $N$ and $k$, however, addressing these questions will require the introduction of some new machinery.

\begin{remark}\label{s1case} In the case $N=S^1$, we observe that the general statement (\ref{maybebd}) follows immediately from Theorem \ref{lbdsthm}, since the homotopy classes $[u]\in [M:S^1]$ are determined by the pullback $[u^*(\alpha)]\in H^1_{dR}(M)$ of the generator $\alpha=\frac{1}{2\pi}d\theta\in \mathcal{A}^1(S^1)$ of $H^1_{dR}(S^1)$. In this case, Theorem \ref{lbdsthm} is closely related to questions raised by the author in \cite{St17} concerning the mountain-pass energies for complex Ginzburg-Landau functionals $E_{\epsilon}:W^{1,2}(M,\mathbb{C})\to \mathbb{R}$ over paths in $W^{1,2}(M,\mathbb{C})$ connecting distinct classes in $[M:S^1]$. On a two-dimensional annulus $\Omega$, these questions had previously been studied by Almeida \cite{Alme1},\cite{Alme2}, who obtained precise estimates for the $E_{\epsilon}$-mountain pass energies separating the components of $W^{1,2}(\Omega,S^1)$ in $W^{1,2}(\Omega,\mathbb{C})$. As discussed in \cite{Alme2}, these results have some physical relevance, since the presence of high energy walls separating local minimizers of Ginzburg-Landau energies on $\Omega$ is related to the appearance of permanent currents in superconducting cylinders.
\end{remark}

\hspace{3mm} Finally, in Section \ref{pharms}, we apply standard mountain pass arguments to the generalized Ginzburg-Landau functionals $E_{p,\epsilon}$ studied by Wang in \cite{Wang} to demonstrate the existence of critical points for the $p$-energy functional with energy lying between $\gamma_p^*(u,v)$ and $\gamma_p(u,v)$. In particular, we obtain the following result.

\begin{thm}\label{pharmexist}
If $u,v\in C^{\infty}(M,N)$ are $(k-2)$-homotopic, $p\in (k-1,k)$, and
$$\gamma_p^*(u,v)>\max\{E_p(u),E_p(v)\},$$
then there exists a stationary $p$-harmonic map $w\in W^{1,p}(M,N)$ with
$$\gamma_p^*(u,v)\leq E_p(w)\leq \gamma_p(u,v).$$
\end{thm}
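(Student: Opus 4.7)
The plan is to apply the classical mountain pass theorem to the Ginzburg--Landau relaxations
$$E_{p,\epsilon}(w) := \int_M |dw|^p + \frac{1}{\epsilon^p}\int_M F(w)$$
of the $p$-energy, studied by Wang in \cite{Wang}, where $F : \mathbb{R}^L \to [0,\infty)$ is a smooth potential vanishing exactly on $N$. For each fixed $\epsilon > 0$, $E_{p,\epsilon}$ is $C^1$ on the Banach space $W^{1,p}(M,\mathbb{R}^L)$ and satisfies the Palais--Smale condition (see \cite{Wang}), so the classical mountain pass theorem applies.

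Letting $\Gamma$ denote the set of continuous paths in $W^{1,p}(M,\mathbb{R}^L)$ from $u$ to $v$, I would consider the mountain pass level
$$c_{p,\epsilon} := \inf_{\gamma \in \Gamma}\sup_{t \in [0,1]} E_{p,\epsilon}(\gamma(t)).$$
The upper bound $c_{p,\epsilon} \leq \gamma_p(u,v)$ is immediate, since any continuous path in $W^{1,p}(M,N)$ lies in $\Gamma$ and has vanishing potential term. For the reverse inequality $\liminf_{\epsilon \to 0} c_{p,\epsilon} \geq \gamma_p^*(u,v)$, I would sample a near-optimal continuous path $\gamma \in \Gamma$ at a sufficiently fine partition $0 = t_0 < \cdots < t_r = 1$ so that adjacent $\gamma(t_i)$ are arbitrarily $L^p$-close by uniform continuity, and then modify each $\gamma(t_i)$ to take values in $N$ via the nearest-point projection $\pi_N$ on a tubular neighborhood of $N$, combined with a cut-off on the (small) set where $\gamma(t_i)$ strays far from $N$. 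The potential bound $\int F(\gamma(t_i)) \leq c_{p,\epsilon}\epsilon^p$ forces this bad set to shrink as $\epsilon \to 0$, and a careful modification along the lines of \cite{Wang} yields an $L^p$-fine sequence in $W^{1,p}(M,N)$ of $p$-energy at most $c_{p,\epsilon} + o_\epsilon(1)$, giving the claimed lower bound on $c_{p,\epsilon}$.

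Since by hypothesis $\gamma_p^*(u,v) > \max\{E_p(u),E_p(v)\} = \max\{E_{p,\epsilon}(u), E_{p,\epsilon}(v)\}$, for all $\epsilon$ sufficiently small we obtain $c_{p,\epsilon} > \max\{E_{p,\epsilon}(u),E_{p,\epsilon}(v)\}$, and the mountain pass theorem produces a critical point $w_\epsilon \in W^{1,p}(M,\mathbb{R}^L)$ of $E_{p,\epsilon}$ with $E_{p,\epsilon}(w_\epsilon) = c_{p,\epsilon}$. Letting $\epsilon \to 0$ and applying the convergence theory of \cite{Wang} for critical points of $E_{p,\epsilon}$ with uniformly bounded energy, a subsequence converges weakly in $W^{1,p}$ and strongly in $L^p$ to a stationary $p$-harmonic map $w : M \to N$. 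Lower semicontinuity of the $p$-energy and vanishing of the potential term in the limit yield $E_p(w) \leq \liminf_{\epsilon\to 0} E_{p,\epsilon}(w_\epsilon) \leq \gamma_p(u,v)$, while the sharp lower bound $c_{p,\epsilon} \geq \gamma_p^*(u,v) - o_\epsilon(1)$, combined with the absence of energy loss for the mountain pass sequence in the subcritical regime $p < k$, gives $E_p(w) \geq \gamma_p^*(u,v)$.

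The principal difficulty I anticipate is the lower bound $c_{p,\epsilon} \geq \gamma_p^*(u,v) - o_\epsilon(1)$: converting a continuous path in $W^{1,p}(M,\mathbb{R}^L)$ with small potential energy into an $L^p$-fine sequence in $W^{1,p}(M,N)$ of nearly the same $p$-energy requires a projection/cut-off onto $N$ that is delicate in the regions where the path strays from $N$, and the resulting discrete sequence must be interpolated (only in the $L^p$-fine sense) in a way that does not inflate the $p$-energy by more than $o_\epsilon(1)$. Controlling the passage $\epsilon \to 0$ to rule out concentration that would depress $E_p(w)$ strictly below $\lim c_{p,\epsilon}$ is a secondary but closely related point, which should be handled by Wang's strong convergence/no-neck results in the subcritical range $p \in (k-1,k)$.
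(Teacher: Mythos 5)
Your proposal follows essentially the same route as the paper: regularize via Wang's Ginzburg--Landau functionals $E_{p,\epsilon}$, apply the classical mountain pass theorem for each fixed $\epsilon$, pass to a stationary $p$-harmonic limit via Wang's strong convergence (Corollary B of \cite{Wang}, valid for $p\notin\mathbb{N}$), and sandwich the limit energy between $\gamma_p^*$ and $\gamma_p$. The one place where you anticipate more difficulty than actually arises is the lower bound $\gamma_p^*(u,v)\leq\sup_{\epsilon>0}c_{p,\epsilon}$: rather than the "delicate" quantitative projection-and-cutoff you sketch, the paper proves a soft compactness statement (Lemma \ref{dcomplem}) by contradiction -- a sequence $w_j$ with $E_{p,\epsilon_j}(w_j)$ bounded and $\epsilon_j\to 0$ has a $W^{1,p}$-weak, $L^p$-strong subsequential limit $w\in W^{1,p}(M,N)$ with $E_p(w)\leq\liminf E_{p,\epsilon_j}(w_j)$ -- which immediately gives, for $\epsilon$ small, an $N$-valued $w'$ that is $L^p$-close to any $w$ with bounded $E_{p,\epsilon}$-energy and satisfies $E_p(w')\leq E_{p,\epsilon}(w)+\eta$; sampling a near-optimal $E_{p,\epsilon}$-path and applying this lemma pointwise then produces the required $L^p$-fine $N$-valued sequence with no explicit cutoff construction needed.
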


\hspace{3mm} As a consequence, whenever (\ref{maybebd}) holds--as it does if $u$ and $v$ induce different maps on real cohomology, by Theorem \ref{lbdsthm}--we obtain for some $q=q(u,v)\in (k-1,k)$ a family of stationary $p$-harmonic maps 
$$(q,k)\ni p\mapsto u_p\in W^{1,p}(M,N)$$
whose energy grows like $\frac{1}{k-p}$ as $p\to k$. In the case $N=S^1$, the asymptotic behavior as $p\to 2$ of stationary $p$-harmonic maps with $E_p(u_p)=O(\frac{1}{2-p})$ was studied by the author in \cite{St18}, with the conclusion that the singular sets $Sing(u_p)$ converge subsequentially--in the Hausdorff sense--to the support of a stationary, rectifiable $(n-2)$-varifold in $M$. Though we expect the situation for general $N$ and $k$ to be considerably more complicated, it would likewise be interesting to understand the asymptotic behavior of stationary $p$-harmonic maps $u_p\in W^{1,p}(M,N)$ with $E_p(u_p)=O(\frac{1}{k-p})$ as $p\to k$.

\section*{Acknowledgements} I am grateful to Fernando Cod\'{a} Marques for his encouragement and constant support. During the completion of this project, the author was partially supported by NSF grants DMS-1502424 and DMS-1509027.

\section{Preliminaries}\label{prelims}

\subsection{Cubeulations, Slicing, and Retraction to Skeleta}\label{slices}\hspace{30mm}

\hspace{3mm} Throughout the paper, we will make repeated use of the fact that any compact Riemannian manifold $M$ admits a cubeulation (see, for instance, \cite{Wh1}). That is, we can find an $n$-dimensional cubical complex $K$ whose $n$-faces are isometric to $[-1,1]^n$, and a bi-Lipschitz map
$h: |K|\to M$ from the underlying space $|K|$ onto $M$. A key tool in the study of Sobolev maps (see, e.g., \cite{Wh1}, \cite{Wh2}, \cite{Bet}, \cite{HangLin}) is the ability to ``slice" a given Sobolev function $f$ by the skeleta of a well-chosen cubeulation $h:|K|\to M$, in such a way that the composition $f\circ h$ is well-behaved on every lower-dimensional skeleton $|K^j|$ of $K$. 

\hspace{3mm} To make this idea precise, we review some notation from \cite{HangLin}. For an $n$-dimensional cubical complex $K$ and a Lipschitz map $f\in Lip(|K|,\mathbb{R})$, define the $\mathcal{W}^{1,p}(K)$ norm of $f$ by
\begin{equation}
\|f\|_{\mathcal{W}^{1,p}(K)}^p:=\Sigma_{\sigma\in K}\int_{\sigma}(|f|^p+|df|^p)d\mathcal{H}^{\dim(\sigma)},
\end{equation}
and denote by $\mathcal{W}^{1,p}(K,\mathbb{R})$ the completion of  $Lip(|K|,\mathbb{R})$ with respect to this norm. Note that the functions in $\mathcal{W}^{1,p}(K,\mathbb{R})$ then lie in the usual Sobolev space $W^{1,p}(\Delta,\mathbb{R})$ for every cell $\Delta\in K$ of any dimension. Following arguments from Section 3 of \cite{HangLin} (see also the proof of Lemma 2.2 in \cite{Hang}), we obtain the following slicing lemma for Sobolev functions. 

\begin{lem}\label{slicelemma} Given a closed Riemannian manifold $M$, there exists a constant $C(M)<\infty$ and for each $\delta\in (0,1]$, there exists a cubical complex $K_{\delta}$ whose $n$-cells are isometric to $[-\delta,\delta]^n$, such that for any $f\in W^{1,p}(M,\mathbb{R})$, we can find a cubeulation $h:|K_{\delta}|\to M$ for which $f\circ h\in \mathcal{W}^{1,p}(K_{\delta},\mathbb{R})$, satisfying
\begin{equation}
Lip(h)+Lip(h^{-1})\leq C,
\end{equation}
\begin{equation}
\int_{|K_{\delta}^j|}|f\circ h|^pd\mathcal{H}^j\leq C\delta^{j-n}\int_M|f|^p,
\end{equation}
and
\begin{equation}
\int_{|K_{\delta}^j|}|d(f\circ h)|^pd\mathcal{H}^j\leq C\delta^{j-n}\int_M|df|^p
\end{equation}
for every $0\leq j\leq n$.
\end{lem}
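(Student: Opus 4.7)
The plan is to construct $K_\delta$ as a refinement of a fixed bi-Lipschitz cubeulation of $M$, and to select the map $h$ by Fubini-averaging over translations of the refinement. First, I would invoke the existence of cubeulations \cite{Wh1} to fix a bi-Lipschitz $h_*\colon|K_*|\to M$ whose $n$-cells are isometric to $[-1,1]^n$, with Lipschitz bounds on $h_*^{\pm 1}$ depending only on $M$. For $\delta\in(0,1]$, I would let $K_\delta$ be the abstract cubical complex obtained by subdividing each $n$-cell of $K_*$ into cubes isometric to $[-\delta,\delta]^n$; rounding effects when $1/\delta$ is not an integer multiple of the reference scale can be absorbed into $C(M)$ via a bi-Lipschitz correction near one wall of each reference cell.

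Next, I would introduce a family of cubeulations $\{h_\tau\colon|K_\delta|\to M\}_{\tau\in Q_\delta}$, with $Q_\delta=[-\delta,\delta]^n$, where $h_\tau$ is obtained from $h_*$ by shifting the $\delta$-subdivision within each reference $n$-cell of $K_*$ by $\tau$ and then reconciling the shifted subdivisions of adjacent cells along shared $(n-1)$-faces by a bi-Lipschitz interpolation supported in a thin collar. The bi-Lipschitz constants of $h_\tau$ then remain uniformly bounded in $\tau$ and $\delta$. In each reference $n$-cell the shifted $j$-skeleton consists of $j$-dimensional pieces parallel to coordinate directions, and a direct Fubini computation (modeled on the flat-torus case) yields, for every $f\in W^{1,p}(M,\mathbb{R})$ and $0\le j\le n$,
\begin{equation*}
\frac{1}{|Q_\delta|}\int_{Q_\delta}\int_{|K_\delta^j|}|f\circ h_\tau|^p\,d\mathcal{H}^j\,d\tau \;\le\; C(M)\,\delta^{j-n}\int_M|f|^p,
\end{equation*}
together with the analogous inequality for $|d(f\circ h_\tau)|$ on the left and $|df|$ on the right (using the uniform Lipschitz bounds on $h_\tau$). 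Applying Chebyshev's inequality to the sum of the resulting $2(n+1)$ quantities then produces a single $\tau\in Q_\delta$ for which all of the desired bounds hold simultaneously, with $h:=h_\tau$, after enlarging $C(M)$ by a fixed multiplicative constant. The top-dimensional cases $j=n$ follow immediately from the uniform bi-Lipschitz constants of $h_\tau$, independent of the Fubini argument.

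The main obstacle is not the Fubini computation itself, which reduces in each reference cell to the standard one-dimensional slicing identity, but rather the construction of the family $\{h_\tau\}$ as a family of genuine bi-Lipschitz cubeulations of $M$, since a translation of the $\delta$-subdivision in one reference $n$-cell will generically produce a conflicting subdivision on each neighboring cell across the shared $(n-1)$-face. I expect to address this by localizing the shift to the interior of each reference $n$-cell and using a bi-Lipschitz interpolation supported in a thin collar of each face of $K_*$ to produce compatible subdivisions globally; verifying that the contribution from the collar regions is dominated by the bulk term (with multiplicative constants depending only on $M$) should then leave a purely cell-by-cell Fubini computation, completing the argument.
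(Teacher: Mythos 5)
Your overall strategy (fix a unit-scale cubeulation, subdivide to scale $\delta$, generate a family of cubeulations by a translation parameter, and select a good one by Fubini/Chebyshev averaging) is in the same spirit as the paper's argument, but the specific mechanism you use to generate the family is where the approach breaks down. The paper (following Hang--Lin and the proof of Lemma 2.2 in Hang) does not shift the $\delta$-grid inside each coarse cell; instead it fixes the combinatorial complex $K_\delta$ once and for all and perturbs the cubeulation \emph{ambiently}: after isometrically embedding $M\subset\mathbb{R}^L$, it sets $H(x,\xi):=\Pi_M(h(x)+\xi)$ for $\xi$ in a Euclidean ball $B^L_\epsilon(0)$ of radius $\epsilon(M)$ independent of $\delta$, and averages over $\xi$ via the coarea formula. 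The crucial effect is that $h_\xi(|K_\delta^j|)$ sweeps through $M$ for \emph{every} $j$-cell of $K_\delta$, including those sitting in the coarse $(n-1)$-skeleton.

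Your intra-cell shift does not have this property, and this is a genuine gap. Because a general cubeulation of $M$ (e.g.\ the boundary of an $(n+1)$-cube viewed as a cubeulation of $S^n$) glues adjacent $n$-cells by maps that are not translations, the shift by $\tau$ in one coarse cell cannot be made compatible with the shift in a neighbor across their shared face; the interpolation you invoke in a thin collar necessarily pins the coarse skeleton $|K_*^{n-1}|$ (and hence all $j$-cells of $K_\delta$ lying in it) to a fixed position in $M$, independent of $\tau$. Consequently, for $j\le n-1$ the quantity $\int_{|K_\delta^j|}|f\circ h_\tau|^p\,d\mathcal{H}^j$ decomposes into a $\tau$-dependent bulk part plus a $\tau$-\emph{independent} part coming from $|K_\delta^j|\cap|K_*^{n-1}|$, and the latter is the $L^p$ norm of (a representative of) $f$ restricted to a fixed lower-dimensional set, which for general $f\in W^{1,p}(M)$ is not finite, let alone bounded by $C\delta^{j-n}\int_M|f|^p$. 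No Chebyshev selection over $\tau$ can remove a contribution that does not depend on $\tau$. The step you flag as ``verifying that the contribution from the collar regions is dominated by the bulk term'' is not a technical estimate to be checked but the place where the argument actually fails; fixing it requires the ambient-perturbation idea (so that the coarse skeleton itself moves), which is precisely the extra ingredient the paper's proof supplies.
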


\begin{remark}\label{combrk} In Section \ref{slicelemapp} of the appendix, we indicate how the methods of Sections 3 and 4 of \cite{HangLin} can be used to find such cubeulations. It is useful to note that the complexes $K_{\delta}$ are obtained (up to minor rescalings) by subdividing the faces of an initial unit-size complex $K=K_1$: in particular, it follows that the maximum number $\nu_j(K_{\delta})$ of $j$-cells containing a given $(j-1)$-cell as a face is fixed independent of $\delta$. As a consequence, whenever we have an estimate of the form $\int_{\sigma}f_1\leq \int_{\partial\sigma}f_2$ for every $j$-cell $\sigma\in K_{\delta}$, we can sum over all $j$-cells to obtain an estimate of the form $\int_{|K^j_{\delta}|}f_1\leq C\int_{|K_{\delta}^{j-1}|}f_2$, where the constant $C$ is independent of $\delta$.
\end{remark}

\hspace{3mm} Let $K_{\delta}$ be an $n$-dimensional cubical complex as in the conclusion of Lemma \ref{slicelemma}. For every $j\leq n$ and $s \in [0,\delta]$,  we define the map $\phi_{j,s}:|K^j|\to |K^j|$ by identifying each $j$-cell $\sigma\in K^j$ with $[-\delta,\delta]^j$, and setting
$$\phi_{j,s}(x):=\frac{\delta\cdot  x}{\max\{s,|x|_{\infty}\}}.$$
(Here we use the notation $|x|_{\infty}:=\max_{1\leq i\leq j}|x_i|$.) The family $\phi_{j,s}$ then interpolates between the identity at $s=\delta$ and a retraction to the $(j-1)$-skeleton at $s=0$. For $1<p<j$, the pullbacks $\phi_{j,s}^*f=f\circ \phi_{j,s}$ then define endomorphisms of $\mathcal{W}^{1,p}(K^j_{\delta},\mathbb{R})$, whose properties we record in the following lemma. (Compare \cite{HangLin}, Sections 4 and 5.)

\begin{lem}\label{philem} For $1<p<j\leq n$ and $f\in L^{\infty}\cap\mathcal{W}^{1,p}(K_{\delta}^j,\mathbb{R})$, the family $[0,\delta]\ni s\mapsto \phi_{j,s}^*f$ of pullbacks is a continuous path in $\mathcal{W}^{1,p}(K_{\delta}^j,\mathbb{R}))$, for which
\begin{equation}\label{phienest}
\int_{|K^j_{\delta}|}|d(\phi^*_{j,s}f)|^p\leq C(M)\left(\frac{\delta}{j-p}\int_{|K_{\delta}^{j-1}|}|df|^p+\int_{|K^j_{\delta}|}|df|^p\right)
\end{equation}
and
\begin{equation}\label{philpest}
\int_{|K^j_{\delta}|}|f-\phi_{j,s}^*f|^p\leq C(M)\delta^p\left(\frac{\delta}{j-p}\int_{|K^{j-1}_{\delta}|}|df|^p+\int_{|K^j_{\delta}|}|df|^p\right).
\end{equation}
\end{lem}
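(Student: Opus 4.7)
\textbf{Proof plan for Lemma \ref{philem}.} I would prove both estimates one $j$-cell at a time, then sum over cells using the bounded-valence property from Remark \ref{combrk}. Fix a single $j$-cell $\sigma$, identified with $[-\delta,\delta]^j$. The structure of $\phi_{j,s}$ naturally divides $\sigma$ into an inner region $A_s = \{|x|_\infty < s\}$, on which $\phi_{j,s}(x) = (\delta/s) x$ is a linear dilation, and an outer region $B_s = \{s \leq |x|_\infty \leq \delta\}$, on which $\phi_{j,s}(x) = (\delta/|x|_\infty) x$ is $L^\infty$-radial projection onto $\partial\sigma$.

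For the energy bound (\ref{phienest}), on $A_s$ the change of variables $y = (\delta/s) x$ yields
$$\int_{A_s} |d(\phi_{j,s}^* f)|^p = (s/\delta)^{j-p} \int_\sigma |df|^p \leq \int_\sigma |df|^p,$$
where $(s/\delta)^{j-p} \leq 1$ crucially uses the hypothesis $p < j$. On $B_s$, I would introduce polar-type coordinates $x = r\omega$ with $r = |x|_\infty \in [s,\delta]$ and $\omega \in \partial [-1,1]^j$, observing that $\phi_{j,s}(r\omega) = \delta\omega$ depends only on $\omega$, and that $|d\phi_{j,s}(x)| \leq C\delta/r$ (verified face-by-face on $\partial[-1,1]^j$). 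With $dx = r^{j-1}\, dr\, dA(\omega)$, the integral factors into an angular integral times $\int_s^\delta r^{j-1-p}\, dr \leq \delta^{j-p}/(j-p)$; rescaling the angular integral back to $\partial\sigma$ introduces a factor $\delta^{-(j-1)}$, producing $C\delta/(j-p) \cdot \int_{\partial\sigma} |df|^p$. Summing over all $j$-cells via Remark \ref{combrk} gives (\ref{phienest}).

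For (\ref{philpest}), I would use that $\phi_{j,s}(x)$ lies on the ray through the origin containing $x$, with $|\phi_{j,s}(x) - x| \leq C\delta$. On $B_s$, the segment from $x = r\omega$ to $\phi_{j,s}(x) = \delta\omega$ is radial, so
$$|f(x) - f(\phi_{j,s}(x))|^p \leq C(\delta - r)^{p-1} \int_r^\delta |df(\rho\omega)|^p\, d\rho$$
by H\"older's inequality; integrating against $r^{j-1}\, dr\, dA(\omega)$ and swapping the order of the $(r,\rho)$ integrals (the inner $r$-integral of $(\delta - r)^{p-1}$ being bounded by $\delta^p/p$) produces $C\delta^p \int_\sigma |df|^p$. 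On $A_s$, the same dilation change of variables used in the energy step, combined with $|\phi_{j,s}(x) - x| \leq C\delta$ and Jensen's inequality applied to the representation $f(x) - f(\phi_{j,s}(x)) = \int_0^1 df(\psi_u(x)) \cdot (\phi_{j,s}(x) - x)\, du$ with $\psi_u(x) = (1-u)x + u\phi_{j,s}(x)$, again produces $C\delta^p \int_\sigma |df|^p$. Summing yields (\ref{philpest}); in fact the proof gives the stronger bound with only the $C\delta^p\int_{|K_\delta^j|}|df|^p$ term on the right. Continuity of $s \mapsto \phi_{j,s}^* f$ in $\mathcal{W}^{1,p}$ then follows by approximating $f$ by uniformly bounded Lipschitz functions (where continuity is immediate from pointwise convergence and dominated convergence) and passing to the limit using the uniform bounds just established; this is where the hypothesis $f \in L^\infty$ enters.

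The main obstacle I anticipate is the careful bookkeeping needed to extract the precise factor $\delta/(j-p)$ in the energy estimate: one has to correctly match powers of $\delta$ after introducing polar coordinates on $B_s$ and rescaling the angular integral back to $\partial\sigma$. A subsidiary annoyance is the non-smoothness of the $L^\infty$ norm on $\partial[-1,1]^j$, which forces the polar computation to be done face-by-face; however, the pointwise bound $|d\phi_{j,s}| \leq C\delta/r$ survives on each face, so this causes no genuine difficulty.
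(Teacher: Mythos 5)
Your proof is correct, and it departs from the paper's in two or three places worth noting. First, you compute directly on the cube using $L^\infty$-polar coordinates, whereas the paper transfers each $j$-cell to the Euclidean ball via the bi-Lipschitz map $x\mapsto \frac{|x|_\infty}{|x|}x$ and then exploits the resulting exact identity (equation (\ref{enercomp}) in the paper) expressing $\int_{B_1}|df_s|^p$ as a sum of an annular term and a dilated-interior term; the two computations contain identical content, and the paper's choice is mainly a way to sidestep the "face-by-face" bookkeeping you flag as an annoyance. Second, and more substantively, for the $L^p$ bound the paper deduces (\ref{philpest2}) from the energy bound (\ref{phienest2}) via the $L^p$ Poincar\'{e} inequality (noting that $f-\phi_{j,s}^*f$ vanishes on $\partial\sigma$), whereas you prove it directly by the fundamental theorem of calculus along radial segments plus H\"older, which avoids Poincar\'{e} entirely and — as you observe — yields the slightly sharper bound with only the $C\delta^p\int_{|K_\delta^j|}|df|^p$ term; this is a genuine, if modest, improvement over the stated estimate. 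Third, for continuity the paper argues via the $L^\infty$ bound (to get $L^p$-continuity of $s\mapsto f_s$) together with continuity of the $p$-energy $s\mapsto E_p(f_s)$ read off from (\ref{enercomp}), implicitly invoking uniform convexity of $L^p$ to upgrade weak to strong convergence of $df_s$; your route by density of regular functions and uniform operator bounds is a clean and equally standard alternative. One small caution on the latter: for merely Lipschitz approximants the pointwise convergence $df_n(\phi_{j,s'}(x))\to df_n(\phi_{j,s_0}(x))$ need not hold a.e.\ as $s'\to s_0$ (the gradient of a Lipschitz function is only $L^\infty$), so you should either approximate by $C^1$ functions or argue through the strong continuity of the dilation operator on $L^p$; this is the same soft point the paper elides under "bounded convergence," so it does not distinguish your approach from theirs.
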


\begin{proof} Since the restriction of $\phi_{j,s}$ to the $(j-1)$-skeleton $|K_{\delta}^{j-1}|$ is given by the identity map for all $s$, it is enough to show that $\phi_{j,s}^*f$ defines a continuous path in $W^{1,p}(\sigma, \mathbb{R})$ on each $j$-cell $\sigma \cong [-\delta,\delta]^j$, and to establish (\ref{phienest}) and (\ref{philpest}), it is enough to show (per Remark \ref{combrk}) that the estimates
\begin{equation}\label{phienest2}
\int_{\sigma}|d(\phi_{j,s}^*f)|^p\leq C(M)\left(\frac{\delta}{j-p}\int_{\partial\sigma}|df|^p+\int_{\sigma}|df|^p\right)
\end{equation}
and
\begin{equation}\label{philpest2}
\int_{\sigma}|f-\phi_{j,s}^*f|^p\leq C(M)\delta^p\left(\frac{\delta}{j-p}\int_{\partial\sigma}|df|^p+\int_{\sigma}|df|^p\right)
\end{equation}
hold on every $j$-cell $\sigma$.

\hspace{3mm} To see this, note that, by the scale-invariance of (\ref{phienest}) and (\ref{philpest2}), it is enough to establish continuity and the desired estimates in the case $\delta=1$. Moreover, by virtue of the usual bi-Lipschitz correspondence 
$$[-1,1]^j\to B_1^j, \text{\hspace{3mm}} x\mapsto \frac{|x|_{\infty}}{|x|}x$$
between the unit cube and the unit ball, we can identify the maps $\phi_{j,s}^*f$ with the maps $f_s\in W^{1,p}(B_1^j,\mathbb{R})$ given by
$$f_s(x):=f(x/\max\{s,|x|\}),\text{ }s\in [0,1].$$
It is then straightforward to check that
\begin{equation}\label{enercomp}
\int_{B_1}|df_s|^p=\int_s^1 r^{j-1-p}\left(\int_{\partial B_1}|df|_{\partial B_1}|^p\right)dr+s^{j-p}\int_{B_1}|df|^p,
\end{equation}
from which (\ref{phienest2}) follows immediately. And since $f_s=f$ on $\partial B_1$, the $L^p$ estimate (\ref{philpest2}) follows from (\ref{phienest2}) and the $L^p$ Poincar\'{e} inequality (see, e.g., \cite{EG}, Section 4.5.2).

\hspace{3mm} To see the continuity of $s\mapsto f_s$ in $W^{1,p}(B_1)$, we appeal first to the fact that $f\in L^{\infty}$ and the bounded convergence theorem to conclude that $s\mapsto f_s$ is a continuous path in $L^p$. And since we see from (\ref{enercomp}) that the $p$-energy $s\mapsto E_p(f_s)$ is a continuous function of $s$, $f_s$ must give a continuous path in $W^{1,p}(B_1)$ as well.

\end{proof}

\hspace{3mm} Next, we define the retraction maps $\Phi_j: |K_{\delta}|\to |K_{\delta}^{j-1}|$, sending almost every point in the cubical complex into the $(j-1)$-skeleton, by 
\begin{equation}
\Phi_j:=\phi_{j,0}\circ \cdots \circ \phi_{n,0}.
\end{equation}
By definition of the radial retractions $\phi_{j,0}$, we observe that $\Phi_j$ is locally Lipschitz away from an $(n-j)$-dimensional set, called the \emph{dual $(n-j)$-skeleton} $L^{n-j}$ to $K$. 

\hspace{3mm} Note that the dual skeleton $L^{n-j}$ can be expressed as the union (disjoint except for a set of dimension $(n-j-1)$) over all $j$-cells $\sigma\in K$ of the sets
\begin{equation}\label{dualpdef}
P(\sigma):=\overline{\Phi_{j+1}^{-1}(a_{\sigma})},
\end{equation}
where $a_{\sigma}$ is the center of $\sigma$; and each $P(\sigma)$, in turn, can be decomposed into the union of the intersections
$$P(\sigma)\cap \Delta$$
over all $n$-cells $\Delta$ containing $\sigma$ as a $j$-face. Identifying a given $n$-cell $\Delta\in K_{\delta}$ with $[-\delta,\delta]^n$ and the $j$-cell $\sigma$ with $[-\delta,\delta]^j\times \{(\delta,\ldots,\delta)\}$, we see that $P(\sigma)\cap \Delta$ is given by the box $\{0\}\times [0,\delta]^{n-j}$. Since, per Remark \ref{combrk}, the number of $n$-cells intersecting a given $j$-cell is bounded independent of $\delta$ for the cubeulations $K_{\delta}$ of Lemma \ref{slicelemma}, it follows in this case that
\begin{equation}\label{dualcellsest}
\mathcal{H}^{n-j}(P(\sigma))\leq C(M)\delta^{n-j}
\end{equation}
for every $j$-cell $\sigma\in K_{\delta}$.

\begin{remark}\label{homrk} Observe also that if $\Delta\in K_{\delta}$ is an $n$-face of $K_{\delta}$, $\sigma$ is a $j$-cell in $\partial \Delta$, and $f: S^{j-1}\to \Delta\setminus L^{n-j}$ is an embedding of the $(j-1)$-sphere $S^{j-1}$ that links with $P(\sigma)$ (e.g., sending $S^{j-1}$ to the sphere $S_{\delta/2}^{j-1}\times \{(\delta/2,\ldots,\delta/2)\}$, under the identifications $\Delta\cong[-\delta,\delta]^n$, $\sigma\cong[-\delta,\delta]^j\times \{(\delta,\ldots,\delta)\}$) then $f$ can be homotoped through $\Delta\setminus L^{n-j}$ to a homeomorphism with $\partial \sigma$.
\end{remark}

\hspace{3mm} As in \cite{HangLin}, we will make repeated use of the retraction maps $\Phi_j$ to deform given maps in $W^{1,p}(M,N)$ to ones which are continuous away from $L^{n-j}$. For the arguments of Section \ref{limthms} in particular, it will be important to have some precise estimates for the maps produced in this way, which we record in the following lemma.

\begin{lem}\label{wkapplem}
For $f\in W^{1,p}(M,\mathbb{R})$ and $\delta\in (0,1]$, choose a cubeulation $h:|K_{\delta}|\to M$ as in the conclusion of Lemma \ref{slicelemma}. Then for $k\in \{2,\ldots,n\}$ and $1<p<k$, the function $\tilde{f}:=f|_{|K^{k-1}_{\delta}|}\circ \Phi_k\circ h^{-1}$ satisfies
\begin{equation}\label{wkappenest}
E_p(\tilde{f})\leq \frac{C(M)}{k-p}E_p(f)
\end{equation}
and
\begin{equation}\label{wkapplpest}
\int_M|f-\tilde{f}|^p\leq \delta^p\frac{C(M)}{k-p}E_p(f).
\end{equation}
\end{lem}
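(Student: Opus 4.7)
The plan is to work on the cubical complex $K_\delta$ via the bi-Lipschitz cubeulation $h$. Setting $g := f\circ h$, Lemma \ref{slicelemma} provides the slicing bound $\int_{|K^j_\delta|}|dg|^p\,d\mathcal{H}^j\leq C\delta^{j-n}E_p(f)$ on each skeleton; since $h$ and $h^{-1}$ have Lipschitz constants uniform in $\delta$, it suffices to prove the $|K^n_\delta|$-analogues of (\ref{wkappenest}) and (\ref{wkapplpest}) for $h_n := g|_{|K^{k-1}_\delta|}\circ\Phi_k = \tilde f\circ h$. I construct $h_n$ one skeleton at a time: set $h_{k-1} := g|_{|K^{k-1}_\delta|}$, and for $m = k,\ldots,n$ define $h_m$ on $|K^m_\delta|$ cell-by-cell via $h_m|_\sigma := h_{m-1}|_{\partial\sigma}\circ\phi_{m,0}$ on each $m$-cell $\sigma$; iteratively this yields $h_n = g|_{|K^{k-1}_\delta|}\circ\Phi_k$.

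For the energy estimate (\ref{wkappenest}), I apply (\ref{phienest}) of Lemma \ref{philem} at $s=0$ on each $m$-cell (the hypothesis $p<m$ holds since $p<k\leq m$), then sum using Remark \ref{combrk} to get, for $k\leq m\leq n$,
\[
\int_{|K^m_\delta|}|dh_m|^p\,d\mathcal{H}^m\leq C\,\frac{\delta}{m-p}\int_{|K^{m-1}_\delta|}|dh_{m-1}|^p\,d\mathcal{H}^{m-1}.
\]
Chaining from $m=n$ down to $m=k$ and applying the slicing bound $\int_{|K^{k-1}_\delta|}|dg|^p\,d\mathcal{H}^{k-1}\leq C\delta^{(k-1)-n}E_p(f)$ at the base, the accumulated $\delta^{n-k+1}$ cancels $\delta^{(k-1)-n}$ exactly, while $\prod_{m=k}^n(m-p)^{-1}\leq C(n,k)/(k-p)$ since only the factor at $m=k$ is singular as $p\to k$.

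For the $L^p$ estimate (\ref{wkapplpest}), I use the telescoping identity $g-h_n = \sum_{m=k-1}^{n-1}(H_{m+1}-H_m)$ on $|K^n_\delta|$, where $H_m := g\circ\phi_{m+1,0}\circ\cdots\circ\phi_{n,0}$ (so $H_n = g$ and $H_{k-1}=h_n$). Each increment factors as $H_{m+1}-H_m = (g-g\circ\phi_{m+1,0})\circ\Psi_m$ with $\Psi_m := \phi_{m+2,0}\circ\cdots\circ\phi_{n,0}\colon |K^n_\delta|\to|K^{m+1}_\delta|$ (interpreted as the identity when $m=n-1$), and a polar-coordinate computation analogous to (\ref{enercomp}), now for $\int|v\circ\phi_{j,0}|^p$ in place of $\int|d(v\circ\phi_{j,0})|^p$, yields
\[
\int_{|K^n_\delta|}|v\circ\Psi_m|^p\,d\mathcal{H}^n\leq(C\delta)^{n-m-1}\int_{|K^{m+1}_\delta|}|v|^p\,d\mathcal{H}^{m+1}.
\]
Combining this with (\ref{philpest}) applied on the $(m+1)$-skeleton and the slicing estimates, the $\delta$-powers cancel as in the energy bound; the $m=k-1$ term produces the dominant contribution $C\delta^p/(k-p)\cdot E_p(f)$, while each of the finitely many terms with $m\geq k$ contributes only $C\delta^p E_p(f)\leq C\delta^p/(k-p)\cdot E_p(f)$. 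A triangle inequality (with constant depending on $n-k$) completes the bound.

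The main technical obstacle is the careful bookkeeping of $\delta$-powers across different dimensions: integrals on $|K^j_\delta|$ use $j$-dimensional Hausdorff measure, each radial retraction $\phi_{j,0}^*$ contributes one additional factor of $\delta$, and the slicing estimates of Lemma \ref{slicelemma} carry their own $\delta^{j-n}$ factors; these exponents must conspire to cancel exactly, so that the only surviving obstruction is the $(k-p)^{-1}$ blowup at the single critical retraction $m=k$, the unique step where Lemma \ref{philem} loses control as $p\to k$.
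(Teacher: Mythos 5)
Your proof is correct, and for the energy estimate \eqref{wkappenest} it follows the paper's argument essentially exactly: induction over skeleta $m=k,\ldots,n$, applying the $s=0$ specialization of \eqref{phienest} at each level, with the $\delta$-powers cancelling against the slicing bounds of Lemma~\ref{slicelemma} and the factor $\prod_{m=k}^n(m-p)^{-1}$ contributing the single $(k-p)^{-1}$ singularity. Your implicit observation that the second term of \eqref{phienest} drops at $s=0$ (visible directly from \eqref{enercomp}) is correct, and streamlines the bookkeeping slightly relative to the paper, which carries that term along.

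For the $L^p$ estimate \eqref{wkapplpest} you take a genuinely different route. The paper again proceeds by induction on skeleta, but bounds $\int_{|K^{m}_\delta|}|f-\tilde f|^p$ by applying the scaled $L^p$ Poincar\'e inequality
\[
\int_{\sigma}|f-\tilde{f}|^p\leq C\left(\delta^p\int_{\sigma}|d(f-\tilde{f})|^p+\delta \int_{\partial \sigma}|f-\tilde{f}|^p\right)
\]
on each $m$-cell $\sigma$, feeding in the already-established energy bounds for $d\tilde f$ and the $(m-1)$-skeleton $L^p$ bound. You instead telescope $g-h_n=\sum_{m}(H_{m+1}-H_m)$, factor each increment through the partial retraction $\Psi_m$, and use a direct polar-coordinate computation for $\|v\circ\phi_{j,0}\|_{L^p}$ to propagate the single-skeleton increment bound (obtained from \eqref{philpest}) up to the top skeleton. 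Both approaches produce identical $\delta$-power bookkeeping and identify the $m=k-1$ step as the sole source of the $(k-p)^{-1}$ blow-up; yours trades the Poincar\'e inequality for an explicit pushforward estimate and is arguably more transparent about where each power of $\delta$ comes from, while the paper's avoids the extra polar computation by reusing the Poincar\'e inequality that already appears in the proof of Lemma~\ref{philem}.

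One small caveat: Lemma~\ref{philem} is stated for $f\in L^\infty\cap\mathcal{W}^{1,p}$, and your $g=f\circ h$ need not be bounded a priori for general $f\in W^{1,p}(M,\mathbb{R})$. The $L^\infty$ hypothesis is only used there for the continuity of the path $s\mapsto\phi^*_{j,s}f$, not for the estimates \eqref{phienest}--\eqref{philpest}, so nothing breaks; but it is worth noting that you are invoking only the estimates, not the continuity statement.
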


\begin{proof} Since $K_{\delta}$ is chosen according to Lemma \ref{slicelemma}, we know that $f\in \mathcal{W}^{1,p}(K_{\delta},\mathbb{R})$, and
$$\int_{|K_{\delta}^j|}|df|^p\leq C(M)\delta^{j-n}E_p(f)$$
for every $0\leq j\leq n$. Since $\tilde{f}=f\circ \phi_{k,0}$ on the $k$-skeleton $|K^k|$, we can then apply the estimates of Lemma \ref{philem} to conclude that
\begin{eqnarray*}
\int_{|K^k_{\delta}|}|d\tilde{f}|^p&\leq &C(M)\left(\frac{\delta}{k-p}\cdot \delta^{k-1-n}E_p(f)+\delta^{k-n}E_p(f)\right)\\
&\leq &C'(M)\frac{\delta^{k-n}}{k-p}E_p(f)
\end{eqnarray*}
and
\begin{equation}\label{kskellp}
\int_{|K_{\delta}^k|}|f-\tilde{f}|^p\leq C'(M)\delta^p\cdot\frac{\delta^{k-n}}{k-p}E_p(f).
\end{equation}

\hspace{3mm} Next, we can apply the preceding estimates together with the conclusion of Lemma \ref{philem} on the $(k+1)$-skeleton, to see that $\tilde{f}|_{|K^{k+1}|}=\phi_{k+1,0}^*\phi_{k,0}^*f$ satisfies
\begin{eqnarray*}
\int_{|K^{k+1}_{\delta}|}|d\tilde{f}|^p&\leq &C(M)\left(\frac{\delta}{k+1-p}\cdot \frac{\delta^{k-n}}{k-p}E_p(f)+\delta^{k+1-n}E_p(f)\right)\\
&\leq &C'(M)\frac{\delta^{k+1-n}}{k-p}E_p(f).
\end{eqnarray*}
To estimate $|f-\tilde{f}|^p$ on $|K^{k+1}|$, we again apply the scaled $L^p$ Poincar\'{e} inequality
$$\int_{\sigma}|f-\tilde{f}|^p\leq C\left(\delta^p\int_{\sigma}|d(f-\tilde{f})|^p+\delta \int_{\partial \sigma}|f-\tilde{f}|^p\right)$$
to every $(k+1)$-cell $\sigma \cong [-\delta,\delta]^{k+1}$ in $K_{\delta}$, and sum over $\sigma$ (again appealing to Remark \ref{combrk}), to obtain
\begin{eqnarray*}
\int_{|K^{k+1}_{\delta}|}|f-\tilde{f}|^p&\leq &C(M)\left(\delta^p\int_{|K^{k+1}|}(|df|^p+|d\tilde{f}|^p)+\delta\int_{|K^k_{\delta}|}|f-\tilde{f}|^p\right)\\
&\leq &C'(M)\left(\delta^p[\delta^{k+1-n}E_p(f)+\frac{\delta^{k+1-n}}{k-p}E_p(f)]+\delta\cdot \delta^p \frac{\delta^{k-n}}{k-p}E_p(f)\right).
\end{eqnarray*}
In particular, we conclude that
$$\int_{|K_{\delta}^{k+1}|}|d\tilde{f}|^p\leq C(M)\frac{\delta^{k+1-n}}{k-p}E_p(f)$$
and
$$\int_{|K_{\delta}^{k+1}|}|f-\tilde{f}|^p\leq C(M)\delta^p\frac{\delta^{k+1-n}}{k-p}E_p(f).$$
Carrying on by induction on $j$, for each $j$-skeleton $|K_{\delta}^j|$ with $j\geq k$, we find that
$$\int_{|K_{\delta}^j|}|d\tilde{f}|^p\leq \frac{C}{k-p}\delta^{j-n}E_p(f)$$
and
$$\int_{|K_{\delta}^j|}|f-\tilde{f}|^p\leq \frac{C}{k-p}\delta^p\delta^{j-n}E_p(f)$$
for every $k\leq j\leq n$; in particular, taking $j=n$, we obtain the desired estimates for $\tilde{f}$.

\end{proof}

\subsection{Homological Singularities}\label{homsingbasics}\hspace{20mm}

\hspace{3mm} In this section, we define the homological singularities of Sobolev maps associated to real cohomology classes in the target manifold $N$. All of the results of this section are contained, with slightly different terminology, in Section 5.4.2 of \cite{GMS2}, but we opt for a self-contained treatment more directly suited to the purposes of this paper. 

\hspace{3mm} First, we fix some notation from the theory of currents (see \cite{Fed}, \cite{GMS1}, or \cite{Sim} for an introduction). Denote by $\Omega^m(M)$ the space of smooth $m$-forms on a compact manifold $M$, and by $\Omega_c^m(U)$ the space of compactly supported $m$-forms in an open set $U$. Following  \cite{GMS1}, we use $L_m^q(M)$ to denote the closure of $\Omega^m(M)$ with respect to the $L^q$ norm. The space of general $m$-currents will be denoted by $\mathcal{D}_m(M)$, and for each $T\in \mathcal{D}_m(M)$, following \cite{Sim}, we define the mass $\mathbb{M}(T)$ by
$$\mathbb{M}(T):=\sup\{\langle T,\zeta\rangle \mid  \zeta\in\Omega^m(M),\text{ }\|\zeta\|_{L^{\infty}}\leq 1\}.$$
We use $\mathcal{I}_m(M;\mathbb{Z})$ to denote the space of integer rectifiable $m$-currents, and $\mathcal{Z}_m(M;\mathbb{Z})$ for the subspace of integral $m$-cycles.

\hspace{3mm} Now, let $N$ be a closed, oriented Riemannian manifold. For every integer $1\leq m\leq \dim N$, denote by $\mathcal{A}^m(N)$ the collection of closed $m$-forms on $N$ satisfying
\begin{equation}\label{intcond}
\langle \Sigma,\alpha\rangle\in \mathbb{Z}\text{ for every }\Sigma\in \mathcal{Z}_m(N;\mathbb{Z}).
\end{equation}
Observe that the image of $\mathcal{A}^m(N)$ in de Rham cohomology defines a lattice of full rank in $H^m_{dR}(N)$. Indeed, given integral $m$-cycles $\Sigma_1,\ldots,\Sigma_q$ in $N$ generating $H_m(N;\mathbb{R})$, we can find corresponding cohomology classes $[\alpha_1],\ldots,[\alpha_m]\in H_{dR}^m(N)$ for which $\langle \Sigma_i,\alpha_j\rangle=\delta_{ij}$ (see, e.g., \cite{GMS1}, Section 5.4.1). These $\alpha_i$ evidently lie in $\mathcal{A}^m(N)$, and give a basis for $H^m_{dR}(N)$. 

\hspace{3mm} We now fix some $k\in \{2,\ldots,\dim N+1\}$, and $\alpha\in \mathcal{A}^{k-1}(N)$. Appealing to Nash's embedding theorem, we also fix an isometric embedding $N\subset \mathbb{R}^L$ of $N$ into some Euclidean space. We can then easily extend our $(k-1)$-form $\alpha$ to a compactly supported form 
$$\bar{\alpha}=\Sigma_{|I|=k-1}\bar{\alpha}_I(x)dx^I\in \Omega_c^{k-1}(\mathbb{R}^L),$$
for instance by taking the pullback $\pi_N^*\alpha$ of $\alpha$ to a tubular neighborhood of $N$ by the nearest-point projection $\pi_N$, then multiplying by a suitable cut-off function. 

\hspace{3mm} Now, let $M^n$ be a compact, oriented manifold, possibly with boundary. We record next some important estimates for the pullback $u^*(\overline{\alpha})$ of $\bar{\alpha}$ by smooth maps $u\in C^{\infty}(M,\mathbb{R}^L)$.

\begin{lem}\label{pbestlem} For $u,v\in C^{\infty}(M,\mathbb{R}^L)$, there exist a $(k-1)$-form $\beta(u,v)\in \Omega^{k-1}(M)$ and a $(k-2)$-form $\eta(u,v)\in \Omega^{k-2}(M)$ such that
\begin{equation}\label{pbdecomp}
v^*(\bar{\alpha})-u^*(\bar{\alpha})=\beta+d\eta,
\end{equation}
and the following pointwise estimates hold:
\begin{multline}\label{pbest1}
|v^*(\bar{\alpha})-u^*(\bar{\alpha})| \leq C(\alpha)|u-v|(|du|^{k-1}+|dv|^{k-1})\\
+C(\alpha)|du-dv|(|du|^{k-2}+|dv|^{k-2}),
\end{multline}
\begin{equation}\label{pbest2}
|\beta(u,v)|\leq C(\alpha)|u-v|(|du|^{k-1}+|dv|^{k-1}),
\end{equation}
and
\begin{equation}\label{pbest3}
|\eta(u,v)|\leq C(\alpha)|u-v|(|du|^{k-2}+|dv|^{k-2}).
\end{equation}
\end{lem}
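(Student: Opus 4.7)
The plan is to obtain the decomposition (\ref{pbdecomp}) by applying the Cartan homotopy formula along the straight-line interpolation between $u$ and $v$, and then to read off the pointwise bounds (\ref{pbest1})--(\ref{pbest3}) directly from the resulting integral formulas.

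First, I would introduce the homotopy $H:[0,1]\times M\to\mathbb{R}^L$ by
$$H(t,x):=u(x)+t(v(x)-u(x)),$$
and set $W:=v-u$, noting the pointwise identities $dH(\partial_t)=W$ and $dH_t=(1-t)\,du+t\,dv$, so that $|dH_t|\leq |du|+|dv|$. Decomposing $H^*\bar{\alpha}\in\Omega^{k-1}([0,1]\times M)$ into its $dt$-free and $dt$-containing parts and integrating the latter, I would define the usual homotopy operator
$$K\omega:=\int_0^1 \iota_{\partial_t}(H^*\omega)\,dt,$$
sending $\ell$-forms on $\mathbb{R}^L$ to $(\ell-1)$-forms on $M$. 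A standard verification (integrating $d(H^*\omega)=H^*(d\omega)$ over $[0,1]$ in the $t$-direction) gives the chain homotopy identity
$$v^*\bar{\alpha}-u^*\bar{\alpha}=d(K\bar{\alpha})+K(d\bar{\alpha}),$$
so I would simply set $\eta:=K\bar{\alpha}\in\Omega^{k-2}(M)$ and $\beta:=K(d\bar{\alpha})\in\Omega^{k-1}(M)$.

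For the pointwise estimates on $\eta$ and $\beta$, I would observe that for any smooth $\ell$-form $\omega$ on $\mathbb{R}^L$ and any point $(t,x)$,
$$\bigl|\iota_{\partial_t}(H^*\omega)\bigr|_x\leq C(\ell)\,\|\omega\|_{C^0}\,|W(x)|\,|dH_t|_x^{\,\ell-1},$$
since $\iota_{\partial_t}(H^*\omega)$ evaluated on $(\ell-1)$ tangent vectors of $M$ inserts the vector $W(x)$ into $\omega$ and contracts the remaining $\ell-1$ slots with vectors of the form $dH_t(X_i)$. Applying this with $\omega=\bar{\alpha}$ (an $(k-1)$-form) and integrating in $t\in[0,1]$ yields the bound (\ref{pbest3}) on $\eta$ with constant $C(\alpha):=C(k)\|\bar{\alpha}\|_{C^0}$; applying it with $\omega=d\bar{\alpha}$ (a $k$-form) yields the bound (\ref{pbest2}) on $\beta$ with constant $C(\alpha):=C(k)\|d\bar{\alpha}\|_{C^0}$.

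Finally, for (\ref{pbest1}) I would bypass the decomposition and differentiate $H_t^*\bar{\alpha}$ in $t$ in coordinates: writing $\bar{\alpha}=\sum_{|I|=k-1}\bar{\alpha}_I(x)\,dx^I$, one has
$$\frac{d}{dt}H_t^*\bar{\alpha}=\sum_{|I|=k-1}\Bigl[\bigl(\nabla\bar{\alpha}_I\circ H_t\bigr)\!\cdot\! W\,dH_t^{I}+\bar{\alpha}_I(H_t)\sum_{j=1}^{k-1}dH_t^{i_1}\wedge\cdots\wedge dW^{i_j}\wedge\cdots\wedge dH_t^{i_{k-1}}\Bigr],$$
whose modulus is controlled pointwise by
$C(\alpha)\bigl[\,|W|(|du|+|dv|)^{k-1}+|dW|(|du|+|dv|)^{k-2}\bigr]$; integrating over $t\in[0,1]$ and using $(a+b)^r\leq C(r)(a^r+b^r)$ gives exactly (\ref{pbest1}).

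The calculation is essentially routine; the only mild subtlety is bookkeeping the coordinate description of $K$ so that the correct power of $|dH_t|$ appears in each estimate, and absorbing the $C^1$ norm of $\bar{\alpha}$ into a single constant $C(\alpha)$.
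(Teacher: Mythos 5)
Your proposal is correct, but it takes a genuinely different route from the paper's. You invoke the Cartan/de Rham homotopy operator $K\omega=\int_0^1\iota_{\partial_t}(H^*\omega)\,dt$ along the straight-line interpolation $H(t,x)=u(x)+t(v(x)-u(x))$, so that the decomposition (\ref{pbdecomp}) falls out of the chain-homotopy identity $v^*\bar{\alpha}-u^*\bar{\alpha}=d(K\bar{\alpha})+K(d\bar{\alpha})$ with $\eta=K\bar{\alpha}$, $\beta=K(d\bar{\alpha})$, and the estimates (\ref{pbest2})--(\ref{pbest3}) come from bounding $|\iota_{\partial_t}(H^*\omega)|$ by $\|\omega\|_{C^0}|W||dH_t|^{\ell-1}$. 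The paper instead does an elementary telescoping in coordinates: it writes $v^*\bar{\alpha}-u^*\bar{\alpha}=\sum_I(\bar{\alpha}_I(v)dv^I-\bar{\alpha}_I(u)du^I)$, splits each term as $(\bar{\alpha}_I(v)-\bar{\alpha}_I(u))dv^I+\bar{\alpha}_I(u)(dv^I-du^I)$, and peels off exact pieces of the second summand via the Leibniz rule, iterating on the residual $du^{i_1}\wedge\bar{\alpha}_I(u)(dv^{I_2}-du^{I_2})$ until the form degree is exhausted. The two constructions produce different pairs $(\beta,\eta)$, but both satisfy the stated bounds; your version is cleaner and makes the power-counting of $|du|,|dv|$ and $|W|,|dW|$ transparent in one step, whereas the paper's version is more hands-on and avoids introducing the homotopy operator. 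Note that since $\bar\alpha$ is defined on all of $\mathbb{R}^L$ (not merely on $N$), the straight-line homotopy causes no trouble, so there is no gap there.
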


\begin{remark} Here, $C(\alpha)$ denotes a constant depending on $\|\overline{\alpha}\|_{C^1}$.
\end{remark}

\begin{proof} Write
$$v^*(\bar{\alpha})-u^*(\bar{\alpha}):=\Sigma_I (\bar{\alpha}_I(v)dv^I-\bar{\alpha}_I(u)du^I).$$
Fixing a multi-index $I=(i_1,\ldots,i_{k-1})$, we begin by rearranging
$$\alpha_I(v)dv^I-\alpha_I(u)du^I=(\alpha_I(v)-\alpha_I(u))dv^I+\alpha_I(u)(dv^I-du^I),$$
and noting that
$$|\alpha_I(v)-\alpha_I(u)||dv^I|\leq \|\nabla \alpha\|_{L^{\infty}}|u-v||dv|^{k-1}.$$
The first estimate (\ref{pbest1}) follows immediately, and absorbing the terms
$$(\alpha_I(v)-\alpha_I(u))dv^I$$
into $\beta(u,v)$, we see that, to complete the proof of (\ref{pbdecomp}), it suffices to exhibit a decomposition of the form (\ref{pbdecomp}) for the remaining terms $\alpha_I(u)(dv^I-du^I)$.

\hspace{3mm} To this end, writing $I_2$ for the multi-index $(i_2,\ldots,i_{k-1})$, we observe that
\begin{eqnarray*}
\alpha_I(u)(dv^I-du^I)&=&\alpha_I(u)(d(v^{i_1}-u^{i_1})\wedge dv^{I_2}+du^{i_1}\wedge (dv^{I_2}-du^{I_2}))\\
&=&d[\alpha_I(u)(v^{i_1}-u^{i_1})\wedge dv^{I_2}]-(v^{i_1}-u^{i_1})d(\alpha_I(u))\wedge dv^{I_2}\\
&&+du^{i_1}\wedge \alpha_I(u)(dv^{I_2}-du^{I_2}).
\end{eqnarray*}
Now, the $(k-2)$-form $\alpha_I(u)(v^{i_1}-u^{i_1})\wedge dv^{I_2}$ evidently satisfies an estimate of the form (\ref{pbest3}), and so can be absorbed into $\eta(u,v)$, while the $(k-1)$-form $(v^{i_1}-u^{i_1})d(\alpha_I(u))\wedge dv^{I_2}$ can likewise be absorbed into $\beta(u,v)$. To deal with the leftover term 
$$du^{i_1}\wedge \alpha_I(u)(dv^{I_2}-du^{I_2}),$$
we apply the same argument to the $(k-2)$-form $\alpha_I(u)(dv^{I_2}-du^{I_2})$ that we did to the $(k-1)$-form $\alpha_I(u)(dv^I-du^I)$, and carrying on in this way, we eventually arrive at the desired decomposition.
\end{proof}

\hspace{3mm} Integrating the estimates (\ref{pbest1})-(\ref{pbest3}) of Lemma \ref{pbestlem} and making liberal use of H\"{o}lder's inequality, we obtain for any $p>k-1$ the bounds
\begin{eqnarray*}
\|v^*(\bar{\alpha})-u^*(\bar{\alpha})\|_{L^1}&\leq & C(\alpha)(\|du\|_{L^p}^{k-1}+\|dv\|_{L^p}^{k-1})\|u-v\|_{L^{\infty}}^{k-p}\|u-v\|_{L^p}^{p+1-k}\\
&&+(\|du\|_{L^{k-1}}^{k-2}+\|dv\|_{L^{k-1}}^{k-2})\|du-dv\|_{L^{k-1}},
\end{eqnarray*}
\begin{equation}\label{pbintest2}
\|\beta(u,v)\|_{L^1}\leq C(\alpha)(\|du\|_{L^p}^{k-1}+\|dv\|_{L^p}^{k-1})\|u-v\|_{L^{\infty}}^{k-p}\|u-v\|_{L^p}^{p+1-k},
\end{equation}
and
\begin{equation}\label{pbintest3}
\|\eta(u,v)\|_{L^1}\leq C(\alpha)(\|du\|_{L^{k-1}}^{k-2}+\|dv\|_{L^{k-1}}^{k-2})\|u-v\|_{L^{k-1}}.
\end{equation}

\hspace{3mm} For the remainder of this section, we will have $p\in (k-1,k)$. It then follows from the estimates above that the pullback assignment
$$u\mapsto u^*(\alpha)$$
gives a well-defined, continuous map from $W^{1,p}(M^n,N)$ to the space $L^1_{k-1}(M)$ of $(k-1)$-forms with coefficients in $L^1$. For any map $u\in W^{1,p}(M,N)$, we can, in particular, define the $(n+1-k)$-current $S_{\alpha}(u)\in \mathcal{D}_{n+1-k}(M)$ dual to $u^*(\alpha)$ by
\begin{equation}
\langle S_{\alpha}(u),\zeta\rangle:=\int_Mu^*(\alpha)\wedge \zeta.
\end{equation} 
By virtue of (\ref{pbintest2}) and (\ref{pbintest3}), we then have the following decomposition lemma for the difference $S_{\alpha}(v)-S_{\alpha}(u)$.

\begin{lem}\label{sdecomplem} For $u,v\in W^{1,p}(M,N)$, the difference $S_{\alpha}(v)-S_{\alpha}(u)$ admits a decomposition of the form
\begin{equation}
S_{\alpha}(v)-S_{\alpha}(u):=S_{\alpha}(u,v)+\partial R_{\alpha}(u,v),
\end{equation}
for some $R_{\alpha}(u,v)\in \mathcal{D}_{n-k+2}(M)$ and $S_{\alpha}(u,v)\in \mathcal{D}_{n+1-k}(M)$ satisfying the mass bounds
\begin{equation}
\mathbb{M}(S_{\alpha}(u,v))\leq C(\alpha)[E_p(u)^{\frac{k-1}{p}}+E_p(v)^{\frac{k-1}{p}}]\|u-v\|_{L^p}^{1+p-k}.
\end{equation}
and
\begin{equation}
\mathbb{M}(R_{\alpha}(u,v))\leq C(\alpha)[\|du\|_{L^{k-1}}^{k-2}+\|dv\|_{L^{k-1}}^{k-2}]\|u-v\|_{L^{k-1}}.
\end{equation}
\end{lem}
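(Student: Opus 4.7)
The plan is to define $S_\alpha(u,v)$ and $R_\alpha(u,v)$ as the currents dual (up to sign) to the forms $\beta(u,v)$ and $\eta(u,v)$ furnished by Lemma \ref{pbestlem}, first for smooth maps and then extended by density to $W^{1,p}(M,N)$. The decomposition $S_\alpha(v)-S_\alpha(u)=S_\alpha(u,v)+\partial R_\alpha(u,v)$ will then be the dual of the pointwise identity $v^*(\bar\alpha)-u^*(\bar\alpha)=\beta+d\eta$ combined with Stokes' theorem, while the mass bounds will follow by integrating the pointwise bounds in Lemma \ref{pbestlem} exactly as already done in (\ref{pbintest2})--(\ref{pbintest3}).

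Concretely, I would first approximate $u,v\in W^{1,p}(M,N)$ by smooth maps $u_n,v_n\in C^\infty(M,\mathbb{R}^L)$ converging strongly in $W^{1,p}$, and set $\beta_n:=\beta(u_n,v_n)$, $\eta_n:=\eta(u_n,v_n)$. The integrated bounds already displayed in the excerpt, together with the compactness of $N$ (which uniformly bounds the factor $\|u_n-v_n\|_{L^\infty}^{k-p}$ and lets it be absorbed into $C(\alpha)$), yield precisely the $L^1$-estimates on $\beta_n$ and $\eta_n$ matching the mass bounds in the statement. The same multilinear estimates applied to differences show that $\{\beta_n\}$ and $\{\eta_n\}$ are Cauchy in $L^1$, producing limits $\beta(u,v)\in L^1_{k-1}(M)$ and $\eta(u,v)\in L^1_{k-2}(M)$ independent of the chosen approximating sequences. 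I would then set
$$\langle S_\alpha(u,v),\zeta\rangle:=\int_M\beta(u,v)\wedge\zeta,\qquad\langle R_\alpha(u,v),\zeta\rangle:=(-1)^{k-1}\int_M\eta(u,v)\wedge\zeta,$$
so that the stated mass bounds are immediate, and so that Stokes' theorem on closed $M$ gives $\langle\partial R_\alpha(u,v),\zeta\rangle=\int_M d\eta(u,v)\wedge\zeta$ for every $\zeta\in\Omega^{n+1-k}(M)$. Finally, I would pass to the limit in the smooth identity
$$\int_M\bigl(v_n^*(\bar\alpha)-u_n^*(\bar\alpha)\bigr)\wedge\zeta=\int_M\beta_n\wedge\zeta+\int_Md\eta_n\wedge\zeta,$$
using continuity of $u\mapsto u^*(\alpha)$ from $W^{1,p}(M,N)$ into $L^1_{k-1}(M)$ (already established in the discussion preceding the lemma) on the left, and the $L^1$ convergence of $\beta_n,\eta_n$ on the right, to obtain the desired identity.

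The only step that demands genuine care is the continuous extension of the assignments $(u,v)\mapsto\beta(u,v),\eta(u,v)$ from $C^\infty\times C^\infty$ to $W^{1,p}\times W^{1,p}$ in the $L^1$ topology. This is where the hypothesis $p>k-1$ is crucial: the factor $|du|^{k-1}$ appearing in $\beta$ lies in $L^{p/(k-1)}$, and the factor $|du|^{k-2}$ in $\eta$ lies in $L^{p/(k-2)}\subset L^{k-1}$; combining these via Hölder with either $L^\infty\cap L^p$ control on $|u-v|$ (using compactness of $N$) or $L^{k-1}$ control gives the $L^1$ estimates on differences $\beta_n-\beta_m$ and $\eta_n-\eta_m$ needed to close the approximation argument. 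Beyond this bookkeeping, no conceptual obstacle arises.
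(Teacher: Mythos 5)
Your proposal is correct and follows the same route the paper intends: the paper offers no separate proof of this lemma, stating only that it follows ``by virtue of'' the integrated estimates~(\ref{pbintest2})--(\ref{pbintest3}), which is precisely the dualization of the form-level decomposition $v^*(\bar\alpha)-u^*(\bar\alpha)=\beta+d\eta$ from Lemma~\ref{pbestlem} that you carry out. Your extra care with the density argument (approximating in $W^{1,p}(M,\mathbb{R}^L)$ with $L^\infty$ control, and verifying the $\beta_n,\eta_n$ are $L^1$-Cauchy via the multilinear structure and $p>k-1$) is the correct way to make the paper's terse assertion rigorous.
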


\hspace{3mm} For $u\in W^{1,p}(M,N)$, we now define the \emph{homological singularity $T_{\alpha}(u)\in \mathcal{D}_{n-k}(M)$ associated to $\alpha$} to be the $(n-k)$-boundary
\begin{equation}
\langle T_{\alpha}(u),\zeta\rangle:=\langle \partial S_{\alpha}(u),\zeta\rangle=\int_M u^*(\alpha)\wedge d\zeta.
\end{equation}
Homological singularities of this sort have been been considered by various authors--see, for instance, \cite{GMS2}, or \cite{BCDH}, which studies their role as obstructions to the approximation of Sobolev maps by smooth maps. In the special case $N=S^{k-1}$, $\alpha=\frac{dvol_{S^{k-1}}}{\sigma_{k-1}}$, the current $T_{\alpha}$ coincides with the distributional Jacobian, whose geometric properties have been well studied in recent decades (see, for instance, \cite{ABOpres}, \cite{JSbnv}, and references therein).

\hspace{3mm} When $u$ is smooth on the support of an $(n-k)$-form $\zeta\in \Omega_c^{n-k}(\mathring{M})$ supported in the interior $\mathring{M}$ of $M$, Stokes's theorem and the naturality of the exterior derivative give
\begin{eqnarray*}
\langle T_{\alpha}(u),\zeta\rangle&=&\int u^*(\alpha)\wedge d\zeta\\
&=&(-1)^{k-1}\int d(u^*(\alpha))\wedge\zeta\\
&=&0.
\end{eqnarray*}
In fact, if $u$ is continuous on an open set containing $spt(\zeta)$, then one again has
$$\langle T_{\alpha}(u),\zeta\rangle=0,$$
since we can find a sequence of smooth maps $u_j\in C^{\infty}(M,N)$ approaching $u$ in $W^{1,p}$ on a neighborhood of $spt(\zeta)$ (see, e.g., \cite{Bet},\cite{HangLin}). In particular, if $u$ is continuous away from a closed set $Sing(u)\subset M$, it follows that
\begin{equation}\label{sptchar}
spt(T_{\alpha}(u))\subset Sing(u)\cup \partial M.
\end{equation}

\hspace{3mm} Next, define 
$$\mathcal{E}^p(M,N)\subset W^{1,p}(M,N)$$
to be the collection of maps $u\in W^{1,p}(M,N)$ of the form
$$u=f\circ \Phi_k\circ h^{-1}$$
for some cubeulation $h:|K|\to M$ of $M$ and some Lipschitz map $f\in Lip(|K^{k-1}|,N)$ from the $(k-1)$-skeleton. For such maps, the set $Sing(u)$ of discontinuities is evidently contained in the dual $(n-k)$-skeleton $L^{n-k}$ to $K$, and the homological singularity $T_{\alpha}(u)$ is given by an integral $(n-k)$-cycle that we can describe explicitly. 

\begin{prop}\label{epint}\emph{(cf. [GMS2] Section 5.4.2, Theorem 1)} If $u\in \mathcal{E}^p(M,N)$ is given by $u=f\circ \Phi_k\circ h^{-1}$ for some $f\in Lip(|K^{k-1}|,N)$ and a cubeulation $h:|K|\to M$, then for any $(n-k)$-form $\zeta\in\Omega_c^{n-k}(\mathring{M})$ supported in the interior of $M$, the pairing with $T_{\alpha}(u)$ is given by
\begin{equation}\label{rptchar}
\langle T_{\alpha}(u),\zeta\rangle=\Sigma_{\sigma\in K^k\setminus K^{k-1}}\theta(\sigma)\cdot \int_{P(\sigma)}\zeta,
\end{equation}
where
\begin{equation}\label{thetadef1}
|\theta(\sigma)|=|\int_{\partial \sigma}f^*(\alpha)|,
\end{equation}
and $P(\sigma)$ is defined as in (\ref{dualpdef}) to be the component of $L^{n-k}$ intersecting the $k$-cell $\sigma\in K$.
\end{prop}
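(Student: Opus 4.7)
The plan is to prove (\ref{rptchar}) by a local calculation on neighborhoods of the interior of each face $P(\sigma)$, combined with a partition-of-unity argument. First, by (\ref{sptchar}) we have $\mathrm{spt}(T_\alpha(u))\subset L^{n-k}$, and since $L^{n-k}=\bigcup_{\sigma\in K^k\setminus K^{k-1}}P(\sigma)$ with pairwise intersections $P(\sigma)\cap P(\sigma')$ of Hausdorff dimension at most $n-k-1$, I would choose a smooth partition of unity $\{\chi_\sigma\}_{\sigma\in K^k\setminus K^{k-1}}$ with $\sum_\sigma\chi_\sigma\equiv 1$ on a neighborhood of $L^{n-k}$ and $\mathrm{spt}(\chi_\sigma)\cap L^{n-k}\subset P(\sigma)$. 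Since $\zeta-\sum_\sigma\chi_\sigma\zeta$ is supported away from $\mathrm{spt}(T_\alpha(u))$, it suffices to verify (\ref{rptchar}) for each piece $\chi_\sigma\zeta$ separately.

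For the local computation, I would fix $\sigma$ and an $n$-cell $\Delta\supset\sigma$, identifying $\Delta\cong[-\delta,\delta]^n$ so that $\sigma\cong[-\delta,\delta]^k\times\{v_\sigma\}$ for the vertex $v_\sigma=(\delta,\dots,\delta)$ and $P(\sigma)\cap\Delta=\{0\}\times[0,\delta]^{n-k}$. By tracing through the definition $\Phi_k=\phi_{k,0}\circ\cdots\circ\phi_{n,0}$, one checks that for a point $(x,y)\in\Delta$ with $|x|_\infty$ sufficiently small and $y$ in the interior of $[0,\delta]^{n-k}$, the value $\Phi_k(x,y)$ lies in $\partial\sigma$; moreover, for each fixed such $y$, the restriction of $\Phi_k$ to the transverse sphere $\partial B_\epsilon^k(0)\times\{y\}$ is a smooth map into $\Delta\setminus L^{n-k}$ which, by Remark \ref{homrk}, is homotopic through $\Delta\setminus L^{n-k}$ to a degree-$1$ homeomorphism with $\partial\sigma$.

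With this product structure in hand, for $\zeta$ supported in a small neighborhood $U=B_\rho^k(0)\times W$ of an interior point of $P(\sigma)\cap\mathring{\Delta}$, I would compute $\int_U u^*(\alpha)\wedge d\zeta$ by applying Stokes' theorem on $U\setminus(B_\epsilon^k(0)\times W)$. Since $d(u^*\alpha)=u^*(d\alpha)=0$ wherever $u$ is smooth and $\zeta$ is compactly supported in the $W$-direction, only the inner boundary contributes, giving
\begin{equation*}
\int_U u^*(\alpha)\wedge d\zeta \;=\; \pm\lim_{\epsilon\to 0}\int_W\biggl(\int_{\partial B_\epsilon^k(0)\times\{y\}} u^*(\alpha)\biggr)\zeta(0,y)\,dy.
\end{equation*}
For each fixed $y\in W$, closedness of $\alpha$ together with the homotopy from Remark \ref{homrk} forces the inner integral to equal $\pm\int_{\partial\sigma}f^*(\alpha)$, independent of $\epsilon$ and $y$. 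This identifies the signed constant $\theta(\sigma)$ (with $|\theta(\sigma)|=|\int_{\partial\sigma}f^*(\alpha)|$) and yields $\langle T_\alpha(u),\chi_\sigma\zeta\rangle=\theta(\sigma)\int_{P(\sigma)}\chi_\sigma\zeta$.

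Summing over $\sigma$ then gives (\ref{rptchar}), once one observes that $\int_{P(\sigma)}\chi_{\sigma'}\zeta=0$ for $\sigma'\neq\sigma$ (since $P(\sigma)\cap\mathrm{spt}(\chi_{\sigma'})\subset P(\sigma)\cap P(\sigma')$ has $\mathcal{H}^{n-k}$-measure zero), so $\int_{P(\sigma)}\chi_\sigma\zeta=\int_{P(\sigma)}\zeta$. I expect the main technical obstacle to lie in the second step: namely, carefully establishing the product/transverse structure of the retraction $\Phi_k$ near the relative interior of $P(\sigma)$ (since it is a composition of $n-k+1$ radial retractions, each with singularities on different subcomplexes), and keeping track of orientations precisely enough to fix the sign of $\theta(\sigma)$—while also verifying that the contribution from $\partial B_\epsilon^k\times W$ passes cleanly to the limit, which uses the integrability of $|u^*(\alpha)|\lesssim|du|^{k-1}\lesssim r^{1-k}$ in the transverse direction.
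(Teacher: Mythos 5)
Your proposal is sound and tracks the paper's strategy at the global level: localize $T_\alpha(u)$ near the $P(\sigma)$'s, determine a multiplicity on each, and identify it with a linking integral via Remark \ref{homrk}. The local computation itself is done by a genuinely different route. The paper first invokes a constancy theorem for boundaryless currents supported on a plane (Theorem 2, Section 5.3.1 of \cite{GMS1}), which requires separately verifying the orthogonality condition (\ref{perpcond}) by a thin-slab estimate, and then fixes the constant by pairing against a radially symmetric test form. You instead compute $\int u^*(\alpha)\wedge d\zeta$ by applying Stokes on an annular region $U\setminus(B_\epsilon^k\times W)$ and passing $\epsilon\to 0$, which packages the structure and the constant into a single boundary-term computation. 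This is perfectly workable; note, however, that to reduce your inner-boundary integral to the Fubini expression $\int_W(\int_{\partial B^k_\epsilon\times\{y\}}u^*(\alpha))\zeta(0,y)\,dy$ you still must argue that the components of $\zeta$ involving a $dx^i$ contribute $o(1)$ as $\epsilon\to 0$, which is exactly the content of the paper's orthogonality step in different clothing.

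The place where your outline has a real gap is the reduction from $\chi_\sigma\zeta$ to test forms supported in small product boxes $B_\rho^k\times W$ near \emph{interior} points of $P(\sigma)\cap\mathring\Delta$. Your partition of unity only localizes to a neighborhood of the entire $P(\sigma)$, which includes its $(n-k-1)$-dimensional boundary, the intersections $P(\sigma)\cap P(\sigma')$, and the pieces along $|K^{n-1}|$ where $P(\sigma)$ passes between adjacent $n$-cells and the product structure breaks down. You need a quantitative argument that the contribution from an $\epsilon$-neighborhood of this lower-dimensional stratum vanishes as $\epsilon\to 0$. This is precisely what the paper's cutoff $\varphi_\epsilon$ supported near $\Xi$ (and the subsequent enlargement of $\Xi$ to include the $P(\sigma_1)\cap P(\sigma_2)$'s) accomplishes, using $|du|\lesssim \dist_{L^{n-k}}^{-1}$ to bound $\int_{\{\dist_\Xi\le\epsilon\}}|du|^{k-1}\lesssim\epsilon^2$ and hence kill the $\epsilon^{-1}$ coming from $d\varphi_\epsilon$. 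The integrability heuristic you cite at the end is the right mechanism but is aimed at the $\epsilon\to 0$ limit of your inner boundary; the reduction to a neighborhood where that inner-boundary argument even applies is a separate step that must be carried out explicitly, and without it the decomposition into small boxes is not justified.
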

\begin{remark} The integrality $\theta(\sigma)\in \mathbb{Z}$ follows from the fact that $\alpha\in \mathcal{A}^{k-1}(N)$, since the pushforward $f_*(\partial\sigma)$ of the $(k-1)$-cycle $\partial\sigma$ by the Lipschitz map $f$ is an integral $(k-1)$-cycle in $N$.
\end{remark}

\hspace{3mm} The proposition follows from results in Section 5.4.2 of \cite{GMS2}, but in the interest of keeping the discussion self-contained (and because our terminology differs somewhat from that of \cite{GMS2}) we provide a proof below. In fact, the conclusion of Proposition \ref{epint} applies to a much larger collection of maps than $\mathcal{E}^p(M,N)$--namely, any $W^{1,p}$ map which is continuous away from the dual $(n-k)$-skeleton of some cubeulation (see \cite{GMS2}). 

\begin{proof} To begin, we claim that it is enough to establish (\ref{rptchar}) for forms $\zeta\in \Omega_c^{n-k}(\mathring{M})$ supported in the interior of a single $n$-face $\Delta$ of the cubeulation. To see this, denote by $\Xi$ the $(n-k-1)$-dimensional intersection
$$\Xi:=L^{n-k}\cap |K^{n-1}|,$$
and for $\epsilon\in (0,1/2)$, define the cutoff functions
$$\chi_{\epsilon}(x):=\psi(\epsilon^{-1}dist(x,L^{n-k}))$$
and
$$\varphi_{\epsilon}(x):=\psi(\epsilon^{-1}dist(x,\Xi)),$$
where $\psi\in C^{\infty}(\mathbb{R})$ satisfies 
\begin{equation}\label{psichar}
\psi(t)=0\text{ for }t\geq 1\text{ and }\psi(t)=1\text{ for }t\leq \frac{1}{2}.
\end{equation}
Then $\chi_{\epsilon}$ is supported on the $\epsilon$-neighborhood of $L^{n-k}$ and $\chi_{\epsilon}\equiv 1$ near $L^{n-k}$, while $\varphi_{\epsilon}$ is supported on the $\epsilon$-neighborhood of $\Xi$, with $\varphi_{\epsilon}\equiv 1$ near $\Xi$.

\hspace{3mm} For any $\zeta\in \Omega_c^{n-k}(M)$, it follows from (\ref{sptchar}) that
$$\langle T_{\alpha}(u),\zeta\rangle=\langle T_{\alpha}(u),\chi_{\epsilon}\zeta\rangle.$$
For $\epsilon>0$ sufficiently small, we observe that the form
$$\chi_{\epsilon}\zeta-\varphi_{\epsilon}\zeta$$
is supported away from the $(n-1)$-skeleton $|K^{n-1}|$, and can therefore be written as a sum of forms supported in the interiors of the $n$-faces $\Delta$ of $K$. In particular, to justify our claim that it suffices to establish (\ref{rptchar}) for forms $\zeta$ supported in an $n$-face $\Delta$, it is enough to show that
\begin{equation}\label{smallskelvan}
\lim_{\epsilon\to 0}\langle T_{\alpha}(u),\varphi_{\epsilon}\zeta\rangle=0.
\end{equation}

\hspace{3mm} To establish (\ref{smallskelvan}), we first observe that
\begin{eqnarray*}
|\langle T_{\alpha}(u),\varphi_{\epsilon}\zeta\rangle|&=&|\int u^*(\alpha)\wedge d\varphi_{\epsilon}\wedge \zeta+\int u^*(\alpha)\wedge \varphi_{\epsilon}d\zeta|\\
&\leq & C\left(\frac{1}{\epsilon}\|\zeta\|_{L^{\infty}}+\|d\zeta\|_{L^{\infty}}\right)\int_{\{\dist_{\Xi}\leq \epsilon\}}|du|^{k-1}.
\end{eqnarray*}
Since $u=f\circ \Phi_k$ for $f\in Lip(|K^{k-1}|,N)$, we have almost everywhere a gradient estimate of the form
$$|du|(x)\leq C\frac{Lip(f)}{\dist_{L^{n-k}}(x)},$$
and we can check by direct computation on each $n$-cell $\Delta$ that
\begin{eqnarray*}
\int_{\{\dist_{\Xi}\leq \epsilon\}}|du|^{k-1}&\leq &CLip(f)^{k-1}\int_{\{\dist_{\Xi}\leq \epsilon\}}(\dist_{L^{n-k}}(x))^{1-k}d\mathcal{H}^n(x)\\
&\leq &C Lip(f)^{k-1}\cdot \epsilon^2.
\end{eqnarray*}
Returning to our estimate for $\langle T_{\alpha}(u),\varphi_{\epsilon}\zeta\rangle$, we then see that
\begin{eqnarray*}
\lim_{\epsilon\to 0}|\langle T_{\alpha}(u),\varphi_{\epsilon}\zeta\rangle|&\leq &\lim_{\epsilon \to 0}C\left(\frac{1}{\epsilon}\|\zeta\|_{L^{\infty}}+\|d\zeta\|_{L^{\infty}}\right)\cdot Lip(f)^{k-1}\epsilon^2\\
&\leq & C(K,\zeta,f)\lim_{\epsilon\to 0}\epsilon,
\end{eqnarray*}
so (\ref{smallskelvan}) holds, and we can restrict our attention to forms $\zeta$ supported in the interior of a single $n$-cell $\Delta$.

\hspace{3mm} In fact, if we modify the definition of $\Xi$ above by adding the $(n-k-1)$-dimensional set given by the union of all intersections $P(\sigma_1)\cap P(\sigma_2)$ for distinct $k$-cells $\sigma_1, \sigma_2\in K$, then the same argument shows that it is enough to establish (\ref{rptchar}) for $\zeta$ supported in the interior of $\Delta\cap \Phi_{k+1}^{-1}(\sigma)$ for some $n$-face $\Delta\in K$ and $k$-cell $\sigma\in K$. 

\hspace{3mm} Thus, identifying $\Delta$ homothetically with $I^n:=[-1,1]^n$, $\sigma$ with $\{(1,\ldots,1)\}\times I^k$, and (consequently) $\Delta\cap \Phi_{k+1}^{-1}(\sigma)$ with $[0,1]^{n-k}\times I^k$, it remains to show that for a $W^{1,p}$ map
$$u:E=(0,1)^{n-k}\times I^k\to N$$
with
$$Sing(u)\subset [0,1]^{n-k}\times \{0\},$$
and any $\zeta\in \Omega_c^{n-k}((0,1)^{n-k}\times (-1,1)^k),$ we have
\begin{equation}\label{simprptchar}
\langle T_{\alpha}(u),\zeta\rangle=\theta\int_{[0,1]^{n-k}\times \{0\}}\zeta,\text{\hspace{2mm} where\hspace{2mm} }|\theta|=|\int_{\partial \sigma}u^*(\alpha)|.
\end{equation}

\hspace{3mm} Since $\partial T_{\alpha}(u)=0$ and the support $spt(T_{\alpha}(u))$ satisfies (by (\ref{sptchar}))
$$spt(T_{\alpha}(u))\cap E\subset [0,1]^{n-k}\times \{0\},$$
it follows from standard constancy theorems (e.g., Theorem 2 in Section 5.3.1 of \cite{GMS1}) that $T_{\alpha}(u)$ has the form (\ref{simprptchar}) for \emph{some} $\theta\in \mathbb{R}$, provided that
\begin{equation}\label{perpcond}
\langle T_{\alpha}(u),\zeta\rangle=0\text{ for every }\zeta\in \Omega_c^{n-k}(E)\text{ with }\langle \zeta, dy^1\wedge \cdots \wedge dy^{n-k}\rangle\equiv 0.
\end{equation}
To prove the orthogonality condition (\ref{perpcond}), write $(y^1,\ldots,y^{n-k},z^1,\ldots,z^k)$ for the coordinates of $E$, and consider $\zeta\in \Omega_c^{n-k}(E)$ of the form
\begin{equation}\label{perptestform}
\zeta=dz^j\wedge \omega\text{ for }\omega\in \Omega_c^{n-k-1}(E),
\end{equation}
and let $\chi\in C_c^{\infty}((-\delta,\delta))$ be a bump function with $\chi(t)=1$ for $t\in [-\frac{\delta}{2},\frac{\delta}{2}]$. Since $spt(T_{\alpha}(u))\subset\{(y,z)\mid z=0\},$ we then have
\begin{eqnarray*}
|\langle T_{\alpha}(u),\zeta\rangle|&=&|\langle T_{\alpha}(u),\chi(z^j)dz^j\wedge \omega\rangle|\\
&=&|\int u^*(\alpha)\wedge \chi(z^j)dz^j\wedge d\omega|\\
&\leq &C\|d\omega\|_{L^{\infty}}E_p(u)^{\frac{k-1}{p}}Vol(\{|z|<\delta\})^{1-\frac{k-1}{p}}.
\end{eqnarray*}
Since $\delta>0$ was arbitrary, we can then take $\delta\to 0$, to see that $\langle T_{\alpha}(u),\zeta\rangle=0$ for any $\zeta$ of the form (\ref{perptestform}). In particular, it follows that (\ref{perpcond}) holds, so that $T_{\alpha}(u)$ indeed has the form (\ref{simprptchar}) for some $\theta\in \mathbb{R}$.

\hspace{3mm} To determine the constant $\theta$ in (\ref{simprptchar}), we test $T_{\alpha}(u)$ against a form
$$\zeta(x)=\zeta(y,z)=\varphi(y)\psi(|z|)dy^1\wedge \cdots\wedge dy^{n-k},$$
where $\varphi\in C_c^{\infty}((0,1)^{n-k})$ and $\psi$ is given by (\ref{psichar}). By direct computation, we see that
\begin{eqnarray*}
\langle T_{\alpha}(u),\zeta\rangle&=&\int u^*(\alpha)\wedge d\zeta\\
&=&\int \varphi(y)u^*(\alpha)\wedge\psi'(|z|)d|z|\wedge dy^1\wedge \cdots \wedge dy^{n-k}\\
&=&(-1)^{k(n-k)}\int_{y\in (0,1)^{n-k}}\varphi(y)\left(\int_{y\times (-1,1)^k}\psi'(|z|)u^*(\alpha)\wedge d|z|\right)dy\\
&=&(-1)^{k(n-k)}(-1)^{k-1}\int_{y\in (0,1)^{n-k}}\varphi(y)\int_0^1\psi'(r)\left(\int_{y\times S_r^{k-1}(0)}u^*(\alpha)\right)dr dy.
\end{eqnarray*}
Now, since $u$ is locally Lipschitz away from $[0,1]^{n-k}\times \{0\}$, it follows from the observations in Remark \ref{homrk} that 
$$\int_{y\times S_r^{k-1}(0)}u^*(\alpha)=\int_{\partial\sigma}u^*(\alpha)$$
for every sphere $y\times S_r^{k-1}(0)$ linking with $[0,1]^{n-k}\times \{0\}$. Using this in the preceding computation, we see that
\begin{eqnarray*}
\langle T_{\alpha}(u),\zeta\rangle&=&(-1)^{k(n-k+1)-1}\langle \alpha, u_*(\partial\sigma)\rangle\int_{y\in (0,1)^{n-k}}\varphi(y)dy \int_0^1\psi'(r)dr\\
&=&(-1)^{k(n-k+1)}\langle \alpha,u_*(\partial\sigma)\rangle\int_{[0,1]^{n-k}\times\{0\}}\zeta.
\end{eqnarray*}
Thus, the constant $\theta$ in (\ref{simprptchar}) must be given by 
$$\theta=(-1)^{k(n-k+1)}\int_{\partial\sigma}u^*(\alpha),$$
as desired.
\end{proof}

\section{Limits of Homological Singularities as $p\to k$}\label{limthms}

\subsection{Degree-type Estimates in $k$-Dimensional Domains}\label{degests}\hspace{30mm}

\hspace{3mm} In this section, we are concerned with estimating the topological quantity
$$\int_{\partial U}u^*(\alpha)$$
for maps $u\in W^{1,p}(U,N)\cap W^{1,p}(\partial U,N)$ on a $k$-dimensional domain $U\subset \mathbb{R}^k$ in terms of the $p$-energy $\int_U|du|^p$. Our arguments are modeled very closely on those used by Jerrard \cite{Jer} to estimate the degrees of $\mathbb{R}^k$-valued maps in terms of Ginzburg-Landau energies (see also \cite{SS},\cite{JSgl}). In the case $N=S^{k-1}$, $\alpha=\frac{dvol}{\sigma_{k-1}}$, estimates similar to the ones we consider here can also be found in \cite{HLW} (see also \cite{HardtLin}), where they are used to study the asymptotic behavior of $p$-energy minimizing maps from $U$ to $S^{k-1}$ as $p\to k$.

\hspace{3mm} Fix a closed $(k-1)$-form $\alpha\in \mathcal{A}^{k-1}(N)$ as before, and define the constant
\begin{equation}
\lambda(\alpha):=\sigma_{k-1}\sup\{\frac{\int_{S^{k-1}}u^*(\alpha)}{\int_{S^{k-1}}|du|^{k-1}}\mid u\in C^{\infty}(S^{k-1},N)\}.
\end{equation}
That $\lambda(\alpha)<\infty$ is clear from the estimates of Section \ref{homsingbasics}, and when working with specific examples, it is not difficult to obtain explicit bounds for $\lambda(\alpha)$. When $N=S^{k-1}$ and $\alpha$ is the normalized volume form $\frac{dvol_{S^{k-1}}}{\sigma_{k-1}}$, for example, one can check (see \cite{HLW}, Section 1) that 
$$\lambda(\alpha)=(k-1)^{\frac{1-k}{2}}.$$

\hspace{3mm} Next, for $p\in (k-1,k)$, we define the constants
$$c(N,\alpha,p):=\frac{\sigma_{k-1}}{\lambda(\alpha)^{\frac{p}{k-1}}},$$
and set
$$F_p(s):=\frac{c(N,\alpha,p)}{k-p}s^{k-p}.$$
The functions $F_p$ will take on the role in our setting played by the functions $\Lambda^{\epsilon}(s)$ in \cite{Jer}, \cite{JSgl}, \cite{SS}.
Since $0<k-p<1$, we easily check that
\begin{equation}\label{fprop1}
\frac{d}{ds}\left(\frac{F_p(s)}{s}\right)<0\text{ for }s>0,
\end{equation}
and, by the concavity of $s\mapsto s^{k-p},$ we have the subadditivity
\begin{equation}\label{fprop2}
F_p(s_1+s_2)\leq F_p(s_1)+F_p(s_2)
\end{equation}
for all $s_1,s_2>0$. Our estimates begin with the following simple lemma (compare \cite{Jer}, Proposition 3.2).

\begin{lem}\label{annlem}
Let $u\in C^{\infty}(B_{r_2}^k(0)\setminus B_{r_1}^k(0),N)$ be a smooth map from the annulus $B_{r_2}^k\setminus B_{r_1}^k$ to $N$, and set
$$d:=|\int_{\partial B_r^k(0)}u^*(\alpha)|$$
for some (hence every) $r\in [r_1,r_2]$. The $p$-energy of $u$ on $B_{r_2}\setminus B_{r_1}$ then satisfies the lower bound
\begin{equation}\label{annest}
E_p(u,B_{r_2}\setminus B_{r_1})\geq d [F_p(r_2/d)-F_p(r_1/d)].
\end{equation}
\end{lem}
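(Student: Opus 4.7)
The plan is to establish a pointwise-in-$r$ lower bound on $\int_{\partial B_r}|du|^p$ by combining the definition of $\lambda(\alpha)$ with H\"older's inequality, and then to integrate in $r$. First I will note that since $u$ is smooth on the annulus and $d\alpha=0$, Stokes' theorem ensures $\int_{\partial B_r}u^*(\alpha)$ is constant in $r$, so $d$ is indeed well-defined. Moreover, since $\alpha\in\mathcal{A}^{k-1}(N)$ and $u_*(\partial B_r)$ is an integral $(k-1)$-cycle in $N$, the number $d$ is a non-negative integer, and the case $d=0$ makes the claim trivial; so I may assume $d\geq 1$.

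The core step is a pointwise-in-$r$ estimate. For each $r\in[r_1,r_2]$, I will rescale $u|_{\partial B_r}$ to a map $\tilde u_r:S^{k-1}\to N$ by $\tilde u_r(y):=u(ry)$. Pullback is diffeomorphism-invariant, so $\int_{S^{k-1}}\tilde u_r^*(\alpha)=\int_{\partial B_r}u^*(\alpha)$, and the $(k-1)$-energy is conformally invariant on a $(k-1)$-manifold, so $\int_{S^{k-1}}|d\tilde u_r|^{k-1}=\int_{\partial B_r}|d_Tu|^{k-1}$, where $d_Tu$ denotes the tangential part of $du$. The definition of $\lambda(\alpha)$ then gives
\begin{equation*}
d\leq\frac{\lambda(\alpha)}{\sigma_{k-1}}\int_{\partial B_r}|d_Tu|^{k-1}.
\end{equation*}

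Next, I will apply H\"older's inequality on $\partial B_r$ with exponents $\frac{p}{k-1}$ and $\frac{p}{p-k+1}$ (valid since $p>k-1$), together with the obvious bound $|du|^p\geq|d_Tu|^p$, to obtain
\begin{equation*}
\int_{\partial B_r}|du|^p\geq\frac{(\sigma_{k-1}d/\lambda(\alpha))^{p/(k-1)}}{(\sigma_{k-1}r^{k-1})^{(p-k+1)/(k-1)}}=c(N,\alpha,p)\,d^{p/(k-1)}\,r^{k-1-p},
\end{equation*}
after collecting powers of $\sigma_{k-1}$ and $\lambda(\alpha)$. Fubini's theorem and integration in $r$ then yield
\begin{equation*}
E_p(u,B_{r_2}\setminus B_{r_1})\geq\frac{c(N,\alpha,p)}{k-p}\,d^{p/(k-1)}(r_2^{k-p}-r_1^{k-p}).
\end{equation*}

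Finally, to recast this in the stated form, note that $d[F_p(r_2/d)-F_p(r_1/d)]=\frac{c(N,\alpha,p)}{k-p}\,d^{p-k+1}(r_2^{k-p}-r_1^{k-p})$. An elementary check shows $\tfrac{p}{k-1}>p-k+1$ throughout $p\in(k-1,k)$ (since $(k-1)^2-p(k-2)\geq(k-1)^2-k(k-2)=1>0$), so for $d\geq 1$ we have $d^{p/(k-1)}\geq d^{p-k+1}$ and the conclusion follows. The main delicate point is this reconciliation between the natural H\"older exponent $\tfrac{p}{k-1}$ and the statement's exponent $p-k+1$; it is resolved precisely by the integrality of $d$ inherited from $\alpha\in\mathcal{A}^{k-1}(N)$, which is the feature that makes the stated form of $F_p$ useful for the subadditive concatenation arguments later in the paper.
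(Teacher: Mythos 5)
Your proof is correct and follows essentially the same route as the paper's: derive the pointwise-in-$r$ bound $\sigma_{k-1}d^{p/(k-1)}r^{k-p-1}\leq\lambda(\alpha)^{p/(k-1)}\int_{\partial B_r}|du|^p$ from the definition of $\lambda(\alpha)$ plus H\"older with exponents $\tfrac{p}{k-1}$, $\tfrac{p}{p-k+1}$, integrate in $r$, and then relax the exponent $p/(k-1)$ to $1+p-k$ using $d\in\mathbb{Z}_{\geq 1}$. The only cosmetic difference is that the paper records the exponent step as the chain $d^{p/(k-1)}\geq d\geq d^{1+p-k}$ and leaves the reduction to $S^{k-1}$ and the integrality of $d$ implicit, whereas you spell out both; your observation that integrality is the load-bearing fact here is a correct reading of the argument.
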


\begin{proof} For any $r\in (r_1,r_2)$, by definition of $\lambda(\alpha)$, we know that
$$\sigma_{k-1}d\leq \lambda(\alpha)\int_{\partial B_r}|du|^{k-1}.$$
Raising both sides to the power $\frac{p}{k-1}$ and applying H\"{o}lder's inequality to the integral on the right-hand side, we then see that
\begin{eqnarray*}
(\sigma_{k-1}d)^{\frac{p}{k-1}}&\leq &\lambda(\alpha)^{\frac{p}{k-1}}|\partial B_r|^{\frac{p}{k-1}-1}\int_{\partial B_r}|du|^p\\
&=&\lambda(\alpha)^{\frac{p}{k-1}}\sigma_{k-1}^{\frac{p}{k-1}-1}r^{p+1-k}\int_{\partial B_r}|du|^p,
\end{eqnarray*}
which we can rearrange to read
$$\sigma_{k-1}d^{\frac{p}{k-1}}r^{k-p-1}\leq \lambda(\alpha)^{\frac{p}{k-1}}\int_{\partial B_r}|du|^p.$$

\hspace{3mm} Integrating the latter relation over $r\in [r_1,r_2]$, we arrive at the estimate
$$\sigma_{k-1}d^{\frac{p}{k-1}}\frac{r_2^{k-p}-r_1^{k-p}}{k-p}\leq \lambda(\alpha)^{\frac{p}{k-1}}E_p(u,B_{r_2}\setminus B_{r_1}).$$
The desired estimate (\ref{annest}) now follows from the trivial observation that
$$d^{\frac{p}{k-1}}\geq d\geq d^{1+p-k},$$
for $p\in (k-1,k)$, so that
$$d[F_p(r_2/d)-F_p(r_1/d)]=d^{1+p-k}\frac{r_2^{k-p}-r_1^{k-p}}{k-p}\leq d^{\frac{p}{k-1}}\frac{r_2^{k-p}-r_1^{k-p}}{k-p}.$$

\end{proof}

\hspace{3mm} We next record an analog of \cite{JSgl}, Proposition 6.4 (see also \cite{Jer}, Proposition 4.1 or \cite{SS}, Proposition 3.1), from which the main estimates of this section will follow.

\begin{lem}\label{ballgrowth} Let $U\subset \mathbb{R}^k$ be a bounded Lipschitz domain, and let $u:U\to N$ be smooth away from a finite set $\Sigma=\{a_1,\ldots,a_m\}\subset\subset U$ of singularities with
$$d_j:=\lim_{r\to 0}\int_{\partial B_r(a_j)}u^*(\alpha).$$
Then for every $\sigma>0$, there exists a family $\mathcal{B}(\sigma)=\{B_j^{\sigma}\}_{j=1}^{m(\sigma)}$ of $m(\sigma)\leq m$ disjoint closed balls of radius $r_j^{\sigma}$ such that, defining
$$d_j^{\sigma}:=|\Sigma_{a_{\ell}\in B_j^{\sigma}\cap\Sigma}d_{\ell}|,$$
we have
\begin{equation}\label{bg0}
\Sigma\subset \bigcup_{j=1}^{m(\sigma)}B_j^{\sigma}\text{\hspace{3mm} and\hspace{3mm} }\Sigma \cap B_j^{\sigma}\neq \varnothing\text{ for each }j,
\end{equation}
\begin{equation}\label{bg1}
\int_{U\cap B_j^{\sigma}}|du|^p\geq \frac{r_j^{\sigma}}{\sigma}F_p(\sigma)\text{ if }d_j^{\sigma}>0,
\end{equation}
and
\begin{equation}\label{bg2}
r_j^{\sigma}\geq \sigma d_j^{\sigma}\text{ if }B_j^{\sigma}\subset U.
\end{equation}
\end{lem}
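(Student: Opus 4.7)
The plan is to adapt the classical vortex ball construction of Sandier--Serfaty (\cite{SS}, Proposition 3.1) and Jerrard--Soner (\cite{JSgl}, Proposition 6.4) to the $p$-energy setting, with Lemma \ref{annlem} replacing the annular lower bound used there for Ginzburg--Landau energies.

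First I would initialize $\mathcal{B}_0$ with a tiny disjoint ball $B(a_j,\epsilon|d_j|)$ around each singularity, for $\epsilon$ small enough that these balls lie in $U$ and are pairwise disjoint. I would then evolve a family $\mathcal{B}(t)$ over $t\in[\epsilon,\sigma]$ by alternating two types of events. In an expansion phase with no mergers, I grow each $B_j$ so that $r_j(t)=D_jt$, where $D_j:=|\int_{\partial B_j}u^*(\alpha)|$ is the current absolute degree; by Lemma \ref{annlem} the resulting energy lower bound is $\int_{B_j(t)}|du|^p\geq D_j F_p(t)=\frac{r_j(t)}{t}F_p(t)$. When two balls touch at time $t$, I merge them by replacing $B_i\cup B_j$ with the smallest enclosing ball $B'$, of radius $r'\leq r_i+r_j$, and degree $D'=|\int_{\partial B'}u^*(\alpha)|\leq D_i+D_j$. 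Summing inherited energies gives $\int_{B'}|du|^p\geq(D_i+D_j)F_p(t)\geq\frac{r'}{t}F_p(t)$, so the invariant $\int_{B_j(t)}|du|^p\geq\frac{r_j(t)}{t}F_p(t)$ persists. I stop a ball when either $r_j=\sigma D_j$ (size rule) or the ball first fails to be contained in $U$ (boundary rule), with global stopping time $\sigma$.

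To verify the conclusions at the final time: (\ref{bg0}) is immediate since each $a_j$ is always contained in some ball of the family, and empty balls can be discarded. For (\ref{bg2}), any interior ball $B_j^\sigma\subset U$ must have been stopped by the size rule or originated from a merger leaving $r_j^\sigma/D_j^\sigma>\sigma$ trivially, so in either case $r_j^\sigma\geq\sigma d_j^\sigma$. For (\ref{bg1}), the invariant gives $\int_{B_j^\sigma\cap U}|du|^p\geq\frac{r_j^\sigma}{t_j^*}F_p(t_j^*)$ where $t_j^*\leq\sigma$ is the stopping time for $B_j^\sigma$; since (\ref{fprop1}) says $F_p(s)/s$ is decreasing, we have $F_p(t_j^*)/t_j^*\geq F_p(\sigma)/\sigma$, yielding exactly (\ref{bg1}).

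The main technical obstacle is handling mergers with degree cancellation, i.e.\ $D'<D_i+D_j$, when the merged ball is still in the interior of $U$ and must be grown further to satisfy (\ref{bg2}). After such a merger one has the excess ratio $\delta:=r'/D'-t>0$, so the scaling $r=D\cdot t$ cannot be continued for this ball; instead I would grow it at rate $dr'/dt=D'$ (equivalently, increase $r'/D'$ at unit rate) until $r'=\sigma D'$ or $B'$ hits $\partial U$. Writing $\alpha=k-p\in(0,1)$ and combining the stored energy $(D_i+D_j)F_p(t_0)$ from the merger with the annular gain $D'[F_p(r'/D')-F_p(r'_0/D')]$ from continued growth, one is reduced to checking the elementary inequality $g(t_0)\geq g(\sigma)$ for $t_0\leq\sigma$, where $g(x):=x^\alpha+\delta x^{\alpha-1}-(x+\delta)^\alpha$; a short calculation using the convexity of $(1+u)^{\alpha-1}$ on $u>0$ shows $g'(x)\leq -(1-\alpha)^2\delta x^{\alpha-2}<0$, which gives the claim. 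The subadditivity (\ref{fprop2}) plays the analogous role for iterated chains of cancelling mergers, and together these ensure that the invariant $\int_{B_j(t)}|du|^p\geq\frac{r_j(t)}{\sigma}F_p(\sigma)$ survives the entire construction.
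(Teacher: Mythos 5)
Your proposal takes essentially the same route as the paper: both implement the Jerrard--Soner/Sandier--Serfaty vortex ball construction in the $p$-energy setting, with Lemma \ref{annlem} supplying the annular lower bound and the structural properties (\ref{fprop1})--(\ref{fprop2}) of $F_p$ driving the merger bookkeeping. The paper's own proof is much terser: it verifies nonemptiness of the set $S$ of admissible $\sigma$ by exhibiting the initial family $\{B_{\sigma_0|d_j|}(a_j)\}$, and then cites Steps 2 and 3 of \cite[Prop.~6.4]{JSgl} verbatim (openness and closedness of $S$) rather than rerunning the continuous-time dynamics. Your excess-ratio computation --- reducing the post-cancellation merger invariant to the monotonicity of $g(x)=x^\alpha+\delta x^{\alpha-1}-(x+\delta)^\alpha$ and checking $g'<0$ via the mean-value estimate $x^{\alpha-1}-(x+\delta)^{\alpha-1}=-(\alpha-1)\delta\xi^{\alpha-2}$, $\xi\in(x,x+\delta)$ --- is correct and makes explicit a step that \cite{JSgl} handles more abstractly through (\ref{fprop1})--(\ref{fprop2}).

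One loose end: your initialization at radius $\epsilon|d_j|$ gives the invariant only in the form $\int_{B_j(t)}|du|^p\geq \tfrac{r_j(t)}{t}\bigl[F_p(t)-F_p(\epsilon)\bigr]$, so either you should carry the $-F_p(\epsilon)$ error through the argument and send $\epsilon\to 0$ at the end, or (as the paper does) apply Lemma \ref{annlem} with $r_1\to 0$ directly on the initial balls $B_{\sigma_0|d_j|}(a_j)$, which removes the error term from the outset. This is a presentational fix rather than a gap in the underlying argument.
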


\begin{proof} Denote by $S$ the collection of $\sigma>0$ for which such a family $\mathcal{B}(\sigma)$ exists. To see that $S$ is nonempty, for each $a_j\in \Sigma$, set
$$d_j:=\lim_{r\to 0}\int_{\partial B_r(a_j)}u^*(\alpha)$$
and
$$D:=1+\max_{1\leq j\leq m}|d_j|,$$
and choose $\sigma_0>0$ such that the balls $B_{D\sigma_0}(a_1),\ldots, B_{D\sigma_0}(a_m)$ are disjoint. Taking
$$r_j^{\sigma_0}:=\sigma_0 |d_j|\text{ if }d_j\neq 0,$$
and
$$r_j^{\sigma_0}:=\sigma_0\text{ if }d_j=0,$$
it's then clear that the collection
$$\mathcal{B}(\sigma_0):=\{B_{r_j^{\sigma_0}}(a_j)\}_{j=1}^m$$
satisfies (\ref{bg0}) and (\ref{bg2}), as well as (\ref{bg1}), by Lemma \ref{annlem}. In particular, $\sigma_0\in S$, so $S\neq \varnothing$.

\hspace{3mm} Since the functions $F_p(s)$ satisfy the growth conditions (\ref{fprop1}) and (\ref{fprop2}), we can apply Steps 2 and 3 in the proof of Proposition 6.4 of \cite{JSgl} directly (with $F_p$ in place of $\Lambda^{\epsilon}$) to see that the set $S$ is open, and closed away from $0$. In particular, we deduce that $S=(0,\infty)$, as desired. 

\end{proof}

\hspace{3mm} With Lemma \ref{ballgrowth} in hand, we arrive at the following proposition. (Compare \cite{Jer}, Theorem 1.2.)

\begin{prop}\label{jerrprop1} Let $U\subset \mathbb{R}^k$ and $u\in W^{1,p}(U,N)$ satisfy the hypotheses of Lemma \ref{ballgrowth}, and suppose that the singular set $\Sigma=\{a_1,\ldots,a_m\}$ satisfies
\begin{equation}\label{singbdrydist}
\inf_{1\leq j\leq m}dist(a_j,\partial U)\geq r>0.
\end{equation}
Setting
$$d:=|\int_{\partial U}u^*(\alpha)|=|\Sigma_{j=1}^md_j|,$$
we then have the lower bound
\begin{equation}\label{jpropest}
\int_U|du|^p\geq d\cdot F_p(r/2d)=\frac{c(N,\alpha,p)}{k-p}(r/2d)^{k-p}d.
\end{equation}
\end{prop}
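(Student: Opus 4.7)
The plan is to apply Lemma \ref{ballgrowth} at scale $\sigma := r/(2d)$ (assuming $d > 0$, since the claim is trivial when $d = 0$) and extract the desired lower bound from a short case analysis on the resulting family $\mathcal{B}(\sigma) = \{B_j^\sigma\}$. The key geometric observation is that since each $B_j^\sigma$ meets $\Sigma$ by (\ref{bg0}) and every singularity has distance at least $r$ from $\partial U$ by (\ref{singbdrydist}), any ball $B_j^\sigma$ not entirely contained in $U$ must have diameter at least $r$, hence radius $r_j^\sigma \geq r/2$.

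I would then split into two cases. \emph{Case 1}: some ball $B_j^\sigma$ with $d_j^\sigma > 0$ satisfies $r_j^\sigma \geq r/2$. Then (\ref{bg1}) alone gives
$$\int_{U \cap B_j^\sigma} |du|^p \;\geq\; \frac{r_j^\sigma}{\sigma} F_p(\sigma) \;\geq\; \frac{r/2}{r/(2d)} F_p(\sigma) \;=\; d \cdot F_p(r/(2d)),$$
which is exactly the estimate (\ref{jpropest}).

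\emph{Case 2}: every $B_j^\sigma$ with $d_j^\sigma > 0$ is contained in $U$, so (\ref{bg2}) yields $r_j^\sigma \geq \sigma d_j^\sigma$, and combining with (\ref{bg1}) gives $\int_{U \cap B_j^\sigma}|du|^p \geq d_j^\sigma F_p(\sigma)$ for each such ball. Since the $B_j^\sigma$ are pairwise disjoint and cover $\Sigma$, the ordinary triangle inequality reads
$$d \;=\; \Bigl|\sum_{\ell=1}^m d_\ell \Bigr| \;\leq\; \sum_j \Bigl|\sum_{a_\ell \in B_j^\sigma \cap \Sigma} d_\ell \Bigr| \;=\; \sum_j d_j^\sigma,$$
and summing the ball-by-ball bound over indices with $d_j^\sigma > 0$ (those with $d_j^\sigma = 0$ contribute nothing on either side) produces $\int_U |du|^p \geq d \cdot F_p(\sigma) = d \cdot F_p(r/(2d))$, as required.

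I do not expect a genuine obstacle here: Lemma \ref{ballgrowth} has already absorbed all the hard work --- the concentration and merging of singularities and the scale-adapted lower bounds inherited from Lemma \ref{annlem} --- and the hypothesis $\dist(a_j,\partial U) \geq r$ is tuned precisely so that the single choice $\sigma = r/(2d)$ reconciles the two regimes: a ball that spills across $\partial U$ is automatically large enough for a one-ball estimate, while a ball interior to $U$ has radius controlled by its degree and contributes additively via the triangle inequality. The only minor bookkeeping is discarding balls with $d_j^\sigma = 0$ and treating the degenerate case $d = 0$ separately.
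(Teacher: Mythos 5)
Your proof is correct and follows essentially the same approach as the paper: both set $\sigma = r/(2d)$, invoke Lemma \ref{ballgrowth}, use (\ref{singbdrydist}) to force balls of small radius into $U$, and combine (\ref{bg1}) with (\ref{bg2}) by summing over the disjoint family. The paper phrases this as a contradiction argument (assuming the bound fails forces all nonzero-degree balls to have radius below $r/2$), which is precisely what your Case 2 handles directly; your Case 1 simply makes explicit the one-ball estimate that the contradiction hypothesis rules out, so the two writeups are logically equivalent.
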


\begin{proof} Again, we can argue just as in \cite{Jer}, \cite{JSgl}. Suppose that (\ref{jpropest}) doesn't hold, to obtain a contradiction. Setting $\sigma=\frac{r}{2d}$, we then have
$$\int_U|du|^p<d \cdot F_p(r/2d)=\frac{r}{2}\frac{F_p(\sigma)}{\sigma}.$$
Choosing a collection of balls $\mathcal{B}(\sigma)=\{B_j^{\sigma}\}$ according to Lemma \ref{ballgrowth}, it follows from (\ref{bg1}) that
\begin{equation}
r_j^{\sigma}\leq \frac{\sigma}{F_p(\sigma)}\int_{U\cap B_j^{\sigma}}|du|^p<\frac{r}{2}
\end{equation}
whenever $d_j^{\sigma}> 0$. In particular, if $d_j^{\sigma}>0$, we then deduce from (\ref{bg0}) and (\ref{singbdrydist}) that
$$B_j^{\sigma}\subset U,$$
and therefore, by (\ref{bg2}), we have
$$r_j^{\sigma}\geq \sigma d_j^{\sigma}.$$
Finally, summing (\ref{bg1}) over $1\leq j\leq m(\sigma)$, we see that
\begin{eqnarray*}
\int_U|du|^p&\geq &\Sigma_{j=1}^{m(\sigma)} \int_{U\cap B_j^{\sigma}}|du|^p\\
&\geq &\Sigma_{j=1}^{m(\sigma)} \frac{r_j^{\sigma}}{\sigma}F_p(\sigma)\\
&\geq &F_p(\sigma)\Sigma_{j=1}^{m(\sigma)} d_j^{\sigma}\\
&\geq &d\cdot F_p(\sigma),
\end{eqnarray*}
a contradiction. Thus, (\ref{jpropest}) holds.
\end{proof}

\begin{remark}\label{jprophyps} By the density results of Bethuel (namely, \cite{Bet}, Theorem 2), we can remove the requirement that $u$ have finite singular set from the hypotheses of Proposition \ref{jerrprop1}: the conclusion applies to any map $u\in W^{1,p}(U,N)$ for which $u$ is continuous on the $r$-neighborhood of $\partial U$ in $U$.
\end{remark}

\hspace{3mm} The final estimate of this section is a simple consequence of Proposition \ref{jerrprop1}, modeled on (\cite{ABOvar}, Lemma 3.10). Arguing much as in \cite{ABOvar}, we will employ this estimate repeatedly in the following sections to obtain the needed compactness results as $p\to k$ for the homological singularities $T_{\alpha}(u_p)$ in higher-dimensional manifolds. In what follows we denote by $I_{\delta}^k$ the $k$-cube
$$I_{\delta}^k:=[-\delta,\delta]^k.$$

\begin{prop}\label{abodgest} Let $u\in W^{1,p}(I^k_{\delta},N)$ such that $u|_{\partial I^k_{\delta}}\in W^{1,p}(\partial I^k_{\delta},N)$, and set
$$d:=|\int_{\partial I_{\delta}^k}u^*(\alpha)|.$$
Then for any $r>0$, we have the estimate
\begin{equation}
\sigma_{k-1}d^{1+p-k}\leq \lambda(\alpha)^{\frac{p}{k-1}}(r/2)^{p-k}(k-p)[E_p(u,I^k_{\delta})+C(k)r E_p(u,\partial I^k_{\delta})].
\end{equation}
\end{prop}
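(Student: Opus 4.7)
\smallskip

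\textbf{Plan.} The proposition differs from Proposition \ref{jerrprop1} only in that here we have no hypothesis on continuity of $u$ near $\partial I_\delta^k$; rather, we are given a boundary energy $E_p(u,\partial I_\delta^k)$. The natural strategy is therefore to extend $u$ to a slightly larger cube $I_{\delta+r}^k$ through a collar of width $r$ on which the extension $\tilde u$ is manifestly continuous, pay the extra energy in terms of $r\cdot E_p(u,\partial I_\delta^k)$, and then invoke Proposition \ref{jerrprop1} on $I_{\delta+r}^k$ for the extended map. The term $(r/2)^{p-k}$ and the exponent $1+p-k$ on $d$ in the target inequality both mirror the expression $d\cdot F_p(r/2d)=\frac{c(N,\alpha,p)}{k-p}(r/2)^{k-p}d^{1+p-k}$ on the right-hand side of \eqref{jpropest}, so the algebra at the end will simply be a rearrangement of \eqref{jpropest}.

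\smallskip

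\textbf{Step 1: The collar extension.} Since $p>k-1$, the trace $u|_{\partial I_\delta^k}\in W^{1,p}(\partial I_\delta^k,N)$ is continuous by Morrey embedding on the $(k-1)$-dimensional boundary (away from the corner $(k-2)$-skeleton, which is a null set, and where continuity follows by approximation). Let $\pi:I_{\delta+r}^k\setminus I_\delta^k\to \partial I_\delta^k$ denote the nearest-point projection, which is well-defined and Lipschitz away from the corner loci. Define
\[
\tilde u(x):=\begin{cases} u(x), & x\in I_\delta^k,\\ u|_{\partial I_\delta^k}(\pi(x)), & x\in I_{\delta+r}^k\setminus I_\delta^k.\end{cases}
\]
Then $\tilde u\in W^{1,p}(I_{\delta+r}^k,N)$ and $\tilde u$ is continuous on the open shell $I_{\delta+r}^k\setminus I_\delta^k$, which contains the full $r$-neighborhood of $\partial I_{\delta+r}^k$ in $I_{\delta+r}^k$.

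\smallskip

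\textbf{Step 2: Energy of the extension.} Decomposing the shell into $k$ slabs (one for each pair of opposite faces of $\partial I_\delta^k$) and using fiber coordinates $(y,t)\in\partial I_\delta^k\times[0,r]$, the projection $\pi$ satisfies $\|d\pi\|_{\mathrm{op}}\le 1$, so that $|d\tilde u(y,t)|\le |du|_{\partial I_\delta^k}|(y)|$ a.e. Integrating gives
\[
\int_{I_{\delta+r}^k\setminus I_\delta^k}|d\tilde u|^p\le C(k)\,r\,E_p(u,\partial I_\delta^k),
\]
and thus $E_p(\tilde u,I_{\delta+r}^k)\le E_p(u,I_\delta^k)+C(k)\,r\,E_p(u,\partial I_\delta^k)$.

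\smallskip

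\textbf{Step 3: Preservation of the boundary pairing.} Since $\tilde u$ is constant along the fibers of $\pi$, the pullback $\tilde u^*(\alpha)$ on $\partial I_{\delta+r}^k$ is the pullback under $\pi|_{\partial I_{\delta+r}^k}:\partial I_{\delta+r}^k\to\partial I_\delta^k$ of $u^*(\alpha)|_{\partial I_\delta^k}$. As this projection is a degree-one homotopy equivalence between closed $(k-1)$-manifolds and $u^*(\alpha)$ is closed on $\partial I_\delta^k$,
\[
\Bigl|\int_{\partial I_{\delta+r}^k}\tilde u^*(\alpha)\Bigr|=\Bigl|\int_{\partial I_\delta^k}u^*(\alpha)\Bigr|=d.
\]

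\smallskip

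\textbf{Step 4: Applying Proposition \ref{jerrprop1} and rearranging.} By Remark \ref{jprophyps}, Proposition \ref{jerrprop1} applies to $\tilde u$ on $U=I_{\delta+r}^k$ with continuous neighborhood of width $r$, yielding
\[
E_p(\tilde u,I_{\delta+r}^k)\ge d\cdot F_p(r/2d)=\frac{c(N,\alpha,p)}{k-p}(r/2)^{k-p}d^{1+p-k}.
\]
Combining with Step 2 and substituting $c(N,\alpha,p)=\sigma_{k-1}/\lambda(\alpha)^{p/(k-1)}$ gives
\[
\sigma_{k-1}\,d^{1+p-k}\le \lambda(\alpha)^{\frac{p}{k-1}}(r/2)^{p-k}(k-p)\bigl[E_p(u,I_\delta^k)+C(k)\,r\,E_p(u,\partial I_\delta^k)\bigr].
\]

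\smallskip

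\textbf{Anticipated difficulty.} The only real technical nuisance is the corner structure of $\partial I_\delta^k$: the nearest-point projection $\pi$ fails to be smooth on the corner loci, so the collar coordinates in Step 2 are only piecewise defined. This is handled by splitting the shell into $k$ slabs and absorbing their overlap (a set of lower dimension) into the constant $C(k)$; the same issue arises in verifying that $\tilde u\in W^{1,p}$ and in justifying Step 3, both of which can be handled by approximating the cube by a smooth convex domain or equivalently by approximating $u|_{\partial I_\delta^k}$ by smooth maps and passing to the limit. Beyond this, the argument is a direct transfer from Proposition \ref{jerrprop1}.
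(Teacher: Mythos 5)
The overall strategy here is precisely the one the paper uses: extend $u$ to a slightly larger cube through a collar of width $r$, bound the additional energy by $C(k)\,r\,E_p(u,\partial I_\delta^k)$, and invoke Proposition~\ref{jerrprop1} (via Remark~\ref{jprophyps}) on $I_{\delta+r}^k$. The difference is in the specific extension map, and this is where your proposal has a genuine gap.

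The paper extends by the $\ell^\infty$-radial map $\tilde u(x):=u(\delta x/|x|_\infty)$ for $\delta\le|x|_\infty\le\delta+r$. This retraction sends almost every point of the shell into the \emph{interior} of some $(k-1)$-face of $\partial I_\delta^k$ (the bad set where two coordinates tie in absolute value has measure zero), so the energy estimate only ever sees $|du|$ restricted to codimension-one faces, which is exactly what $E_p(u,\partial I_\delta^k)$ controls. Partitioning the shell into the $k$ sectors $\{|x_i|=|x|_\infty\}$ and changing variables on each sector gives $\int_{\text{shell}}|d\tilde u|^p\le C(k)\,r\,E_p(u,\partial I_\delta^k)$ cleanly.

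Your extension via nearest-point projection $\pi$ does not have this property. On the corner regions of the shell (e.g.\ where both $x_k>\delta$ and $x_{k-1}>\delta$), the nearest point of $\partial I_\delta^k$ lies on a $(k-2)$-face, and on those regions $\tilde u=u\circ\pi$ factors through the restriction of $u$ to the $(k-2)$-skeleton. The quantity $E_p(u,\partial I_\delta^k)$ gives you no control whatsoever over $|d(u|_G)|$ for a $(k-2)$-face $G$ — indeed, for a generic $u|_{\partial I_\delta^k}\in W^{1,p}(\partial I_\delta^k,N)$ this restriction need not even lie in $W^{1,p}(G)$ — so your Step~2 inequality $|d\tilde u(y,t)|\le|du|_{\partial I_\delta^k}|(y)$ cannot be integrated over the full shell, and it is not clear that $\tilde u\in W^{1,p}(I_{\delta+r}^k)$ at all. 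The "decompose into $k$ slabs, absorb corner overlap into $C(k)$" sentence hides this: the corner regions have positive $k$-dimensional measure (of order $r^2\delta^{k-2}$ and higher), and they are exactly where the derivative bound fails, so they cannot simply be absorbed into a constant. Your "Anticipated difficulty" paragraph flags the corners, but the proposed fixes (smooth convex approximation of the domain, smooth approximation of $u|_{\partial I_\delta^k}$) do not obviously close the gap, because the problem is about which faces the extension evaluates $u$ on, not merely about regularity of $\pi$. The cure is simply to replace nearest-point projection by the radial $\ell^\infty$-retraction, after which Steps~3 and~4 go through as you wrote them.

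One minor remark on Step~3: with the radial retraction the map $\partial I_{\delta+r}^k\to\partial I_\delta^k$ is a bi-Lipschitz homeomorphism, so the equality $\int_{\partial I_{\delta+r}^k}\tilde u^*(\alpha)=\int_{\partial I_\delta^k}u^*(\alpha)$ is immediate by change of variables, without appealing to degree-one homotopy equivalence.
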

\begin{proof} We argue as in \cite{ABOvar}. Extend $u$ to a map $\tilde{u}$ on $I^k_{\delta+r}$ by setting 
$$\tilde{u}(x):=u(\delta\cdot x/|x|_{\infty})\text{ when }\delta \leq |x|_{\infty}\leq r,$$
so that
$$E_p(\tilde{u}, I_{\delta+r}^k)\leq E_p(u,I^k_{\delta})+C(k)r E_p(u,\partial I^k_{\delta}).$$
In view of Remark \ref{jprophyps}, we can then apply Proposition \ref{jerrprop1} to the map $\tilde{u}$ on $I^k_{\delta+r}$ to see that
\begin{eqnarray*}
\frac{c(N,\alpha,p)}{k-p} (r/2)^{k-p} d^{1+p-k}&\leq &E_p(\tilde{u}, I_{\delta+r}^k)\\
&\leq & E_p(u,I^k_{\delta})+C(k)r E_p(u,\partial I^k_{\delta}).
\end{eqnarray*}
Recalling that
$$c(N,\alpha,p):=\frac{\sigma_{k-1}}{\lambda(\alpha)^{\frac{p}{k-1}}},$$
the desired estimate follows immediately.
\end{proof}

\subsection{Compactness Results for $T_{\alpha}(u_p)$ as $p\to k$}\label{cpctthm}\hspace{20mm}

\hspace{3mm} Henceforth, let $M^n$ be a closed, oriented Riemannian manifold of dimension $n\geq k$. In this section and the next, we analyze the limiting behavior as $p\to k$ of the homological singularities $T_{\alpha}(u_p)$ for maps $u_p\in W^{1,p}(M,N)$ with energy growth of the form $E_p(u_p)=O(\frac{1}{k-p})$. Our results are inspired in large part by those of \cite{ABOvar} and \cite{JSgl}, concerning the limiting behavior of Jacobian currents for maps of controlled energy growth with respect to functionals of Ginzburg-Landau type.

\hspace{3mm} The starting point for our compactness results is the following proposition--inspired by arguments in \cite{ABOvar}--in which we construct good approximations $\tilde{u}\in \mathcal{E}^p(M,N)$ to given maps $u\in W^{1,p}(M,N)$, such that the mass $T_{\alpha}(\tilde{u})$ is controlled uniformly.

\begin{prop}\label{epapprox} For any $u\in W^{1,p}(M,N)$ with $k-\frac{1}{2}<p<k$ and 
$$E_p(u)\leq \frac{\Lambda}{k-p},$$
there exists a map $\tilde{u}\in \mathcal{E}^p(M,N)$ for which
\begin{equation}\label{eplp}
\|u-\tilde{u}\|_{L^p(M)}^p\leq C(M)(k-p)^{3p-2}\Lambda,
\end{equation}
\begin{equation}\label{epenerg}
E_p(\tilde{u})\leq \frac{C(M)\Lambda}{(k-p)^2},
\end{equation}
and
\begin{equation}\label{epmassbd}
\mathbb{M}(T_{\alpha}(\tilde{u}))\leq C(M,\alpha,\Lambda)
\end{equation}
\end{prop}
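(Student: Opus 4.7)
The plan is to take $\tilde u$ to be the skeletal retraction of $u$ produced by Lemma \ref{wkapplem}. Fixing the scale $\delta := (k-p)^3 \in (0,1)$ and choosing a cubeulation $h : |K_\delta| \to M$ adapted to $u$ via Lemma \ref{slicelemma}, I set $\tilde u := f \circ \Phi_k \circ h^{-1}$ with $f := (u \circ h)|_{|K^{k-1}_\delta|}$, so $\tilde u \in \mathcal{E}^p(M, N)$ by construction. The two simpler bounds (\ref{epenerg}) and (\ref{eplp}) then follow immediately from Lemma \ref{wkapplem}: the energy estimate gives $E_p(\tilde u) \leq C(M)\Lambda/(k-p)^2$, while the $L^p$ bound reads $\|u - \tilde u\|_{L^p}^p \leq \delta^p \cdot C(M)\Lambda/(k-p)^2 = C(M)(k-p)^{3p-2}\Lambda$, as required.

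For (\ref{epmassbd}) I would invoke Proposition \ref{epint}, which identifies $T_\alpha(\tilde u)$ with the integer cycle $\sum_\sigma \theta(\sigma) [P(\sigma)]$ supported on the dual $(n-k)$-skeleton, where $|\theta(\sigma)| = |\int_{\partial \sigma} (u \circ h)^* \alpha|$. Combining this with the cell-size bound (\ref{dualcellsest}) gives
$$\mathbb{M}(T_\alpha(\tilde u)) \leq C(M)\delta^{n-k}\sum_{\sigma \in K^k_\delta}|\theta(\sigma)|,$$
so matters reduce to bounding this sum. For that I would apply Proposition \ref{abodgest} to $u \circ h$ on each $k$-cell $\sigma \cong I^k_\delta$ with the natural parameter choice $r = \delta$, obtaining
$$|\theta(\sigma)|^{1+p-k} \leq C(\alpha)(k-p)\delta^{p-k}\bigl[E_p(u, \sigma) + C\delta \cdot E_p(u, \partial \sigma)\bigr].$$

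The crucial technical step is then the elementary subadditivity $(\sum_i a_i)^s \leq \sum_i a_i^s$ valid for $s = 1+p-k \in (1/2, 1)$, which is precisely where the hypothesis $p > k - 1/2$ enters. Summing the cell-wise inequalities over $\sigma$, using the slicing bounds $E_p(u, |K^j_\delta|) \leq C(M)\delta^{j-n}\Lambda/(k-p)$ at $j = k-1, k$ (with the bounded face-multiplicity of Remark \ref{combrk} absorbed into $C(M)$), I obtain
$$\Bigl(\sum_\sigma |\theta(\sigma)|\Bigr)^{1+p-k} \leq C(\alpha)\Lambda\, \delta^{p-n}.$$
Extracting the $(1+p-k)$-th root and multiplying through by $C(M)\delta^{n-k}$, the mass becomes bounded by $C(M)\bigl(C(\alpha)\Lambda\bigr)^{1/(1+p-k)}\delta^{(n-k)+(p-n)/(1+p-k)}$. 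A brief computation with $\delta = (k-p)^3$ and $\epsilon := k-p$ shows that the exponent of $\epsilon$ simplifies to $-3\epsilon(n-k+1)/(1-\epsilon)$, and since $\epsilon|\log\epsilon|$ is uniformly bounded on $(0, 1/2)$, this factor remains bounded as $\epsilon \to 0$, producing the desired uniform estimate $\mathbb{M}(T_\alpha(\tilde u)) \leq C(M, \alpha, \Lambda)$.

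The main obstacle, to my mind, is threading a single scale choice $\delta$ through three competing requirements. The $L^p$ estimate essentially forces $\delta = (k-p)^3$, and the non-trivial point is that this same scale remains compatible with the mass bound, which is visible only after passing through the subadditivity step and carefully tracking exponents. The restriction $p > k - 1/2$ plays its role precisely in keeping $s = 1+p-k$ bounded away from $0$, so that the root $(\cdot)^{1/s}$ on the right-hand side of the sum bound stays well-controlled.
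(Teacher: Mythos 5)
Your overall strategy — take $\delta = (k-p)^3$, retract $u$ to the $(k-1)$-skeleton, and bound the mass via Propositions \ref{epint} and \ref{abodgest} — is the paper's strategy, and your organization of the mass-sum estimate (subadditivity of $t\mapsto t^{1+p-k}$ followed by extracting the root) is a clean variant of the paper's version (which instead writes $A_\sigma^{1/(1+p-k)}=A_\sigma\cdot A_\sigma^{(k-p)/(1+p-k)}$ and bounds the second factor pointwise by the crude estimate $A_\sigma\le C\Lambda\delta^{k-n}$); both give the same final exponent of $\delta$ and the same conclusion.

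There is, however, one genuine gap: the map $f:=(u\circ h)|_{|K^{k-1}_\delta|}$ is not Lipschitz, so $\tilde u := f\circ\Phi_k\circ h^{-1}$ is \emph{not} in $\mathcal{E}^p(M,N)$ ``by construction.'' By Lemma \ref{slicelemma} the restriction $f$ lies in $\mathcal{W}^{1,p}$ on the $(k-1)$-skeleton, and since $p>k-1$ it is H\"older continuous by Morrey, but that is weaker than Lipschitz. This matters in two places: (i) membership in $\mathcal{E}^p(M,N)$ is part of the statement you need to prove; and (ii) Proposition \ref{epint}, which you invoke to identify $T_\alpha(\tilde u)$ with $\sum_\sigma\theta(\sigma)[P(\sigma)]$ and to get the integrality of $\theta(\sigma)$, is stated and proved for Lipschitz $f$ (its proof uses the gradient bound $|d\tilde u|\le C\,\mathrm{Lip}(f)/\dist_{L^{n-k}}$, and the integrality $\theta(\sigma)\in\mathbb{Z}$ comes from $f_*(\partial\sigma)$ being an integral cycle). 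The fix is the one the paper uses: since $p>k-1$, replace $(u\circ h)|_{|K^{k-1}_\delta|}$ by a Lipschitz map $f\in \mathrm{Lip}(|K^{k-1}_\delta|,N)$ that is homotopic to it on $|K^{k-1}_\delta|$ and arbitrarily close in $W^{1,p}(|K^{k-1}_\delta|)$; the energy and $L^p$ estimates from Lemma \ref{wkapplem} then persist after modifying $C(M)$, and the boundary integrals $\int_{\partial\sigma}f^*(\alpha)=\int_{\partial\sigma}(u\circ h)^*(\alpha)$ are unchanged because $\alpha$ is closed and $f$ is homotopic to $(u\circ h)|_{|K^{k-1}_\delta|}$. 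Without this step your proof does not produce a map in $\mathcal{E}^p(M,N)$ and cannot legitimately cite Proposition \ref{epint}. Once it is inserted, the rest of your argument — including the careful bookkeeping showing the $\delta=(k-p)^3$ choice is compatible with all three estimates, and the role of $p>k-\tfrac12$ in keeping $1+p-k$ bounded away from $0$ — is correct.
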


\begin{proof} For $\delta\in (0,1)$, choose a cubeulation $h:|K_{\delta}|\to M$ satisfying the conclusions of Lemma \ref{slicelemma}, and let $\tilde{u}_0=u|_{|K_{\delta}^{k-1}|}\circ \Phi_k\circ h^{-1}$. By Lemma \ref{wkapplem}, we then have the estimates
\begin{equation}\label{deltalpest}
\|u-\tilde{u}_0\|_{L^p(M)}^p\leq \delta^p\frac{C(M)}{k-p}\leq E_p(u)\leq \delta^p\frac{C(M)\Lambda}{(k-p)^2}
\end{equation}
and
\begin{equation}\label{genenergest}
E_p(\tilde{u}_0)\leq \frac{C(M)}{k-p}E_p(u) \leq \frac{C(M)\Lambda}{(k-p)^2}.
\end{equation}
Since $p>k-1$, we can then find $f\in Lip(|K_{\delta}^{k-1}|,N)$ homotopic to $u|_{|K_{\delta}^{k-1}|}$ on $|K_{\delta}^{k-1}|$ and arbitrarily close in $W^{1,p}(|K_{\delta}^{k-1}|)$. In particular, we can choose $f$ homotopic to $u|_{|K_{\delta}^{k-1}|}$ such that
$$\tilde{u}:=f\circ \Phi_k\circ h^{-1}\in \mathcal{E}^p(M,N)$$
satisfies (\ref{deltalpest}) and (\ref{genenergest})--modifying the constant $C(M)$ if necessary. Defining $\tilde{u}$ in this way, and taking 
$$\delta=\delta_p:=(k-p)^3,$$
the bounds (\ref{eplp}) and (\ref{epenerg}) follow immediately.

\hspace{3mm} To estimate the mass of $T_{\alpha}(\tilde{u})$, we first appeal to Lemma \ref{epint} to see that 
\begin{equation}\label{twkapp}
T_{\alpha}(\tilde{u})=\Sigma_{\sigma\in K^k\setminus K^{k-1}}\theta(u,\sigma)\cdot [P(\sigma)],
\end{equation}
where
$$|\theta(u,\sigma)|=|\int_{\partial \sigma}\tilde{u}^*(\alpha)|=|\int_{\partial\sigma}u^*(\alpha)|.$$
Now, recalling (\ref{dualcellsest}), we have the volume bound
$$\mathcal{H}^{n-k}(P(\sigma))\leq C(M)\delta^{n-k}$$
for every $k$-cell $\sigma \in K$, so by (\ref{twkapp}), the mass of $T_{\alpha}(\tilde{u})$ is bounded by
\begin{equation}\label{twkmass}
\mathbb{M}(T_{\alpha}(\tilde{u}))\leq \Sigma_{\sigma\in K_{\delta}^k\setminus K_{\delta}^{k-1}}|\theta(u,\sigma)|\delta^{n-k}.
\end{equation}

\hspace{3mm} On the other hand, Proposition \ref{abodgest} (with $r=\delta$) furnishes us with an estimate of the form
\begin{eqnarray*}
|\theta(u,\sigma)|&\leq & C(M,\alpha)\left(\delta^{p-k}(k-p)[E_p(u,\sigma)+\delta E_p(u,\partial\sigma)]\right)^{\frac{1}{1+p-k}}\\
&\leq & C(M,\alpha)\delta^{\frac{p-k}{1+p-k}}(k-p)[E_p(u,\sigma)+\delta E_p(u,\partial\sigma)]\\
&&\cdot\left((k-p)[E_p(u,\sigma)+\delta E_p(u,\partial\sigma)]\right)^{\frac{k-p}{1+p-k}}
\end{eqnarray*}
on every $k$-cell $\sigma$.
Now, since the cubeulation $|K_{\delta}|\to M$ was chosen according to Lemma \ref{slicelemma}, we know that
\begin{equation}\label{skelenergests}
E_p(u,|K^k|)+\delta E_p(u,|K^{k-1}|)\leq C(M)\delta^{k-n}\frac{\Lambda}{k-p},
\end{equation}
from which we immediately obtain the simple-minded estimate
$$(k-p)[E_p(u,\sigma)+\delta E_p(u,\partial\sigma)]\leq C(M)\Lambda \delta^{k-n}$$
for every $k$-cell $\sigma \in K_{\delta}$. Using this to bound the last factor on the right-hand side of the preceding estimate for $|\theta(u,\sigma)|$, we find that
\begin{equation}\label{goodthetaest}
|\theta(u,\sigma)|\leq C(M,\alpha)(k-p)[E_p(u,\sigma)+\delta E_p(u,\partial\sigma)]\cdot [\Lambda \delta^{k-n-1}]^{\frac{k-p}{1+p-k}}.
\end{equation}

\hspace{3mm} On the other hand, summing $E_p(u,\sigma)+\delta E_p(u,\partial\sigma)$ over all $k$-cells $\sigma\in K_{\delta}$, we also have the bound
\begin{eqnarray*}
\Sigma_{\sigma}[E_p(u,\sigma)+\delta E_p(u,\partial\sigma)]&\leq& C(M)[E_p(u,|K^k|)+\delta E_p(u,|K^{k-1}|)]\\
&\leq & C(M)\delta^{k-n}\frac{\Lambda}{k-p}.
\end{eqnarray*}
In particular, summing (\ref{goodthetaest}) over all $k$-cells and employing the estimate above, we find that
$$\Sigma_{\sigma}|\theta(u,\sigma)| \leq C'(M,\alpha)\cdot \Lambda \delta^{k-n}\cdot [\Lambda \delta^{k-n-1}]^{\frac{k-p}{1+p-k}}.$$
Recalling now the bound (\ref{twkmass}) for $\mathbb{M}(T_{\alpha}(\tilde{u}))$, we deduce that
$$\mathbb{M}(T_{\alpha}(\tilde{u}))\leq C(M,\alpha)\Lambda \cdot [\Lambda \delta^{k-1-n}]^{\frac{k-p}{1+p-k}}.$$
Since we've set $\delta=\delta_p=(k-p)^3$, we check directly that
$$\sup_{k-\frac{1}{2}<p<k}[\Lambda \delta_p^{k-1-n}]^{\frac{k-p}{1+p-k}}<\infty,$$
and the desired mass bound (\ref{epmassbd}) follows.
\end{proof}

\hspace{3mm} Given a family of maps $(k-1,k)\ni p\mapsto u_p\in W^{1,p}(M,N)$ with $E_p(u_p)=O(\frac{1}{k-p})$, Proposition \ref{epapprox} gives us an associated family of integral $(n-k)$-cycles $T_{\alpha}(\tilde{u}_p)$ with uniform mass bounds. By showing that 
$$T_{\alpha}(u_p)-T_{\alpha}(\tilde{u}_p)\to 0$$
in $(C^1)^*$ as $p\to k$, and applying the Federer-Fleming compactness theorem to the cycles $T_{\alpha}(\tilde{u}_p)$, we arrive at the following preliminary compactness result.

\begin{cor}\label{cpct1} Let $p_j\in (k-1,k)$ be a sequence with $p_j\to k$, and let $u_j\in W^{1,p_j}(M,N)$ be a sequence of maps satisfying
\begin{equation}\label{epgrowth}
\limsup_{j\to\infty}(k-p_j)E_{p_j}(u_j)\leq \Lambda<\infty.
\end{equation}
Then there exists a subsequence (unrelabelled) $p_j\to k$ such that $T_{\alpha}(u_j)$ converges in $(C^1)^*$ to an integer rectifiable cycle $T\in \mathcal{Z}_{n-k}(M;\mathbb{Z})$ of finite mass.
\end{cor}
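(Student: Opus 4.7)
The plan is to combine the cube-wise approximation of Proposition \ref{epapprox} with the Federer--Fleming compactness theorem, and then control the difference $T_\alpha(u_j) - T_\alpha(\tilde u_j)$ in $(C^1)^*$ via Lemma \ref{sdecomplem}. For each $j$ large enough that $p_j > k - \tfrac{1}{2}$, I apply Proposition \ref{epapprox} to $u_j$ (with a slightly enlarged $\Lambda$ to absorb the $\limsup$) to produce $\tilde u_j \in \mathcal{E}^{p_j}(M,N)$ satisfying (\ref{eplp})--(\ref{epmassbd}); in particular, $\mathbb{M}(T_\alpha(\tilde u_j)) \leq C(M,\alpha,\Lambda)$ uniformly in $j$.

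By the explicit formula (\ref{rptchar}) of Proposition \ref{epint}, each $T_\alpha(\tilde u_j)$ is an integer linear combination of the rectifiable pieces $P(\sigma)$, hence an integer rectifiable current; it is automatically a cycle since $T_\alpha = \partial S_\alpha$. Thus the $T_\alpha(\tilde u_j)$ form a sequence in $\mathcal{Z}_{n-k}(M;\mathbb{Z})$ with uniformly bounded mass, and the Federer--Fleming compactness theorem yields a subsequence and a cycle $T \in \mathcal{Z}_{n-k}(M;\mathbb{Z})$ of finite mass such that $T_\alpha(\tilde u_j) \to T$ in the flat topology. Writing $T_\alpha(\tilde u_j) - T = \partial Q_j + R_j$ with $\mathbb{M}(Q_j) + \mathbb{M}(R_j) \to 0$ and pairing against $\zeta \in C^1$ immediately promotes this to convergence in $(C^1)^*$.

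To finish it then suffices to show $T_\alpha(u_j) - T_\alpha(\tilde u_j) \to 0$ in $(C^1)^*$. Taking the boundary of the decomposition in Lemma \ref{sdecomplem} gives $T_\alpha(u_j) - T_\alpha(\tilde u_j) = \partial S_\alpha(u_j, \tilde u_j)$, so for any $\zeta \in C^1$,
\[
|\langle T_\alpha(u_j) - T_\alpha(\tilde u_j), \zeta\rangle| = |\langle S_\alpha(u_j,\tilde u_j), d\zeta\rangle| \leq \|d\zeta\|_{L^{\infty}}\,\mathbb{M}(S_\alpha(u_j,\tilde u_j)).
\]
Substituting $E_{p_j}(u_j) \leq 2\Lambda/(k-p_j)$, $E_{p_j}(\tilde u_j) \leq C\Lambda/(k-p_j)^2$, and $\|u_j - \tilde u_j\|_{L^{p_j}}^{p_j} \leq C\Lambda(k-p_j)^{3p_j-2}$ into the mass estimate of Lemma \ref{sdecomplem}, the exponent of $(k-p_j)$ on the right-hand side works out to $[(3p_j-2)(1+p_j-k) - 2(k-1)]/p_j$, which tends to $1$ as $p_j \to k$, so $\mathbb{M}(S_\alpha(u_j,\tilde u_j)) \to 0$. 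The main obstacle has already been absorbed into Proposition \ref{epapprox}: the exponent $3p-2$ in (\ref{eplp}) was calibrated precisely so that the divergent factor $E_{p_j}(\tilde u_j)^{(k-1)/p_j} \sim (k-p_j)^{-2(k-1)/k}$ is beaten by $\|u_j - \tilde u_j\|_{L^{p_j}}^{1+p_j-k} \sim (k-p_j)^{(3k-2)/k}$, with one full power of $(k-p_j)$ to spare.
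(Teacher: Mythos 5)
Your proof is correct and follows essentially the same strategy as the paper: apply Proposition \ref{epapprox}, invoke Federer--Fleming compactness on the uniformly mass-bounded integral cycles $T_\alpha(\tilde u_j)$, and then kill the correction $\partial S_\alpha(u_j,\tilde u_j)$ by the mass estimate of Lemma \ref{sdecomplem}, whose exponent of $(k-p_j)$ tends to $1$. The only cosmetic difference is in promoting flat convergence to $(C^1)^*$ convergence: you pair the flat-norm decomposition $\partial Q_j + R_j$ directly against $C^1$ test forms, whereas the paper phrases the Federer--Fleming output as $T_\alpha(\tilde u_j) - T = \partial\Gamma_j$ with $\mathbb{M}(\Gamma_j)\to 0$ (implicitly using the isoperimetric inequality to absorb the small-mass cycle piece); both are fine.
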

\begin{proof} To each map $u_j$, by Proposition \ref{epapprox}, we can associate a map $\tilde{u}_j\in \mathcal{E}^p(M,N)$ for which
$$\|u_j-\tilde{u_j}\|_{L^p}^p\leq C\Lambda (k-p_j)^{3p_j-2},$$
$$E_p(\tilde{u}_j)\leq \frac{C}{(k-p_j)^2}\Lambda,$$
and
$$\mathbb{M}(T_{\alpha}(\tilde{u}_j))\leq C(M,\alpha,\Lambda).$$
On the other hand, by Lemma \ref{sdecomplem}, we know that
$$T_{\alpha}(u_j)-T_{\alpha}(\tilde{u}_j)=\partial S_{\alpha}(u_j,\tilde{u}_j),$$
where
\begin{eqnarray*}
\mathbb{M}(S_{\alpha}(u_j,\tilde{u}_j))&\leq &C(\alpha)[E_{p_j}(u_j)^{\frac{k-1}{p_j}}+E_{p_j}(\tilde{u}_j)^{\frac{k-1}{p_j}}]\|u_j-\tilde{u}_j\|_{L^{p_j}}^{1+p_j-k}\\
&\leq &C [\Lambda/(k-p_j)^2]^{\frac{k-1}{p_j}}\cdot (C\Lambda (k-p_j)^{3p_j-2})^{\frac{1+p_j-k}{p_j}}\\
&\leq & C(M,\alpha,\Lambda)(k-p_j)^{3(p_j+1-k)-2},
\end{eqnarray*}
so in particular,
$$\lim_{j\to\infty}\mathbb{M}(S_{\alpha}(u_j,\tilde{u}_j))=0.$$

\hspace{3mm} Since the currents $T_{\alpha}(\tilde{u}_j)$ are integral cycles with uniformly bounded mass, it follows from the Federer-Fleming compactness theorem (see \cite{Fed}, Theorem 4.2.17) that--after passing to a subsequence--there exists an integral cycle $T\in \mathcal{Z}_{n-k}(M;\mathbb{Z})$ and a sequence of integer-rectifiable $(n+1-k)$-currents $\Gamma_j\in \mathcal{I}_{n+1-k}(M;\mathbb{Z})$ such that
$$\lim_{j\to\infty}\mathbb{M}(\Gamma_j)=0$$
and
$$\partial\Gamma_j=T_{\alpha}(\tilde{u}_j)-T.$$

\hspace{3mm} Putting all this together, we see that
$$T_{\alpha}(u_j)-T=\partial (S_{\alpha}(u_j,\tilde{u}_j)+\Gamma_j)$$
and 
$$\mathbb{M}(S_{\alpha}(u_j,\tilde{u}_j)+\Gamma_j)\to 0,$$
from which it clearly follows that $T_{\alpha}(u_j)-T\to 0$ in $(C^1)^*$.
\end{proof}

\begin{remark}\label{wklkrk} For a simple consequence of Corollary \ref{cpct1}, consider a map $u\in W^{1,p}(M,N)$ for which $|du|\in L^{k,\infty}(M)$--that is, for which
\begin{equation}
\|du\|_{L^{k,\infty}}^k:=\sup\{t^kVol(\{|du|>t\})\mid t\in (0,\infty)\}<\infty
\end{equation}
--and note that for $p<k$, we have the straightforward $L^p$ estimate
\begin{eqnarray*}
\int_M|du|^p&=&\int_0^{\infty}pt^{p-1}Vol(\{|du|>t\})dt\\
&\leq &\int_0^1pt^{p-1}Vol(M)dt+\int_1^{\infty}pt^{p-1}Vol(\{|du|>t\})dt\\
&=&Vol(M)+\|du\|_{L^{k,\infty}}^k\frac{p}{k-p}.
\end{eqnarray*}
In particular, the hypotheses of Corollary \ref{cpct1} hold with $u_j=u$, so we see that $T_{\alpha}(u)$ must be an integral cycle.
\end{remark}

\begin{subsection}{Sharp Mass Bounds for the Limiting Current}\hspace{20mm}

\hspace{3mm} Our goal in this section is to establish a sharp upper bound for the mass of the limiting current in Corollary \ref{cpct1}; namely, we prove the following proposition.

\begin{prop}\label{massbd} For a sequence $p_j\in (k-1,k)$ with $\lim_{j\to\infty}p_j=k$ and a sequence of maps $u_j\in W^{1,p_j}(M,N)$ satisfying
$$\limsup_{j\to\infty}(k-p_j)E_{p_j}(u_j)\leq \Lambda$$
and
\begin{equation}
\lim_{j\to\infty}T_{\alpha}(u_j)=T
\end{equation}
in $(C^1)^*$, the limit current $T$ satisfies
\begin{equation}
\sigma_{k-1}\mathbb{M}(T)\leq \lambda(\alpha)^{\frac{k}{k-1}}\Lambda.
\end{equation}
\end{prop}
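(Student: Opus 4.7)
I would attack this by reducing to the discrete approximations from Proposition~\ref{epapprox}, applying Proposition~\ref{abodgest} cell-by-cell, and passing to the limit as $p_j \to k$. Specifically, set $\delta_j := (k-p_j)^3$ and let $\tilde u_j \in \mathcal{E}^{p_j}(M,N)$ be the approximations produced by Proposition~\ref{epapprox} on cubeulations $h_j : |K_{\delta_j}| \to M$. The mass estimates from the proof of Corollary~\ref{cpct1} show that $\mathbb{M}(T_\alpha(u_j) - T_\alpha(\tilde u_j)) \to 0$, so $T_\alpha(\tilde u_j) \to T$ in $(C^1)^*$ as well; by lower semi-continuity of mass under weak convergence of normal currents, $\mathbb{M}(T) \leq \liminf_j \mathbb{M}(T_\alpha(\tilde u_j))$.

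Proposition~\ref{epint} expresses each $T_\alpha(\tilde u_j)$ as $\sum_\sigma \tilde\theta_j(\sigma)[P(\sigma)]$ with integer weights $\tilde\theta_j(\sigma) = \int_{\partial\sigma} \tilde u_j^*(\alpha)$, so that $\mathbb{M}(T_\alpha(\tilde u_j)) \leq \sum_\sigma |\tilde\theta_j(\sigma)|\, \mathcal{H}^{n-k}(P(\sigma))$. On each $k$-cell $\sigma$, Proposition~\ref{abodgest} with $r = \delta_j$ yields
$$
\sigma_{k-1}|\tilde\theta_j(\sigma)|^{1+p_j-k} \leq \lambda(\alpha)^{p_j/(k-1)}(\delta_j/2)^{p_j-k}(k-p_j)\bigl[E_{p_j}(u_j,\sigma) + C_k\delta_j E_{p_j}(u_j,\partial\sigma)\bigr].
$$
As $j \to \infty$, $(\delta_j/2)^{p_j-k} \to 1$ (since $(k-p_j)|\log\delta_j| = 3(k-p_j)|\log(k-p_j)| \to 0$) and $\lambda(\alpha)^{p_j/(k-1)} \to \lambda(\alpha)^{k/(k-1)}$. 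To convert $|\tilde\theta_j(\sigma)|^{1+p_j-k}$ into $|\tilde\theta_j(\sigma)|$ itself, I would combine the integrality $\tilde\theta_j(\sigma) \in \mathbb{Z}$ with a uniform a priori bound on cell degrees (of the form $|\tilde\theta_j(\sigma)| \leq C(\delta_j, \Lambda, \alpha, N)$, obtained as in the proof of Proposition~\ref{epapprox}) to conclude that $\max_\sigma |\tilde\theta_j(\sigma)|^{k-p_j} \to 1$.

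Summing the cell-wise inequalities weighted by $\mathcal{H}^{n-k}(P(\sigma))$ should then yield $\sigma_{k-1}\mathbb{M}(T) \leq \lambda(\alpha)^{k/(k-1)}\Lambda$, provided one can verify the Fubini-type identity
$$
\sum_\sigma \mathcal{H}^{n-k}(P(\sigma))\bigl[E_{p_j}(u_j,\sigma) + C_k\delta_j E_{p_j}(u_j,\partial\sigma)\bigr] = (1+o(1))\, E_{p_j}(u_j, M)
$$
as $\delta_j \to 0$. The main obstacle is precisely this identity with the \emph{correct} constant: a naive combination of $\mathcal{H}^{n-k}(P(\sigma)) \asymp \delta_j^{n-k}$ with the slicing inequality $E_{p_j}(u_j, |K^k_{\delta_j}|) \leq C(M)\delta_j^{k-n}E_{p_j}(u_j, M)$ from Lemma~\ref{slicelemma} loses a dimensional constant depending on $\binom{n}{k}2^{n-k}$ and on $C(M)$. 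To remedy this, I would apply Proposition~\ref{abodgest} not on the $k$-faces of a single cubeulation but on parallel $k$-dimensional slices of each $n$-cell, integrating over the transverse $(n-k)$-directions so that Fubini produces the sharp identity by design; the discrete sum over $k$-cells of $K_{\delta_j}$ then arises as a Riemann-sum approximation of this sliced estimate. Alternatively, averaging over small translations of the cubeulation (in the spirit of Section~\ref{slices}) should allow one to replace the constant $C(M)$ in Lemma~\ref{slicelemma} by one tending to the dimensional factor that cancels the $\binom{n}{k}2^{n-k}$ combinatorial count.
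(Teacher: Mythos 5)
Your plan correctly identifies the central difficulty -- the loss of a dimensional and bi-Lipschitz constant when summing Proposition~\ref{abodgest} over the $k$-cells of a single cubeulation -- but the two remedies you offer do not close the gap, and the paper's actual proof uses a structurally different device (blow-up at rectifiability points of $T$) that you are missing.

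Here is the obstruction, made concrete. When you sum $|\theta_j(\sigma)|\,\mathcal{H}^{n-k}(P(\sigma))$ over \emph{all} $k$-cells $\sigma$ of $K_{\delta_j}$, you are summing cells in all $\binom{n}{k}$ coordinate directions; the total energy you pay on the right-hand side is then the energy on the full $k$-skeleton $|K^k_{\delta_j}|$, which -- even after averaging over translations -- is of order $\binom{n}{k}\delta^{k-n}E_p(u_j)$, not $(1+o(1))\delta^{k-n}E_p(u_j)$. The sharp constant in the Fubini/translation-averaging estimate (cf.\ (\ref{sharpslicebd}) in Lemma~\ref{aboballlem}) holds only for the $k$-cells parallel to a \emph{single} $k$-plane $\tilde R_k(\delta,a)$; the estimate for the full $k$-skeleton (\ref{coarseslicebd}) necessarily carries a larger constant. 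Restricting to one direction of $k$-cells produces a sharp bound only on the pairing $|\langle T_\alpha(u_j),\varphi\,dx^1\wedge\cdots\wedge dx^{n-k}\rangle|$ with a single simple covector, not on $\mathbb{M}(T_\alpha(u_j))$. So you are forced to choose: either estimate all directions (and lose $\binom{n}{k}$), or estimate one direction (and get only a one-covector pairing). Neither of your two suggested fixes escapes this dichotomy.

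What bridges this gap in the paper is a blow-up argument exploiting the rectifiability of the limit current $T$ (already known from Corollary~\ref{cpct1}). At $|T|$-a.e.\ $x$ the blow-ups $T_{x,r}$ converge to a multiplicity-$\theta(x)$ plane $\theta(x)[P]$. One then applies the one-covector estimate (Lemma~\ref{aboballlem}, proved in $B_1^n(0)$ via exactly the grid translation-averaging you have in mind) to rescaled maps $v_\ell = u_{j_\ell}\circ\Phi_{x,r_\ell}^{-1}$, with the covector $\beta$ chosen to orient $[P]$. The rectifiability is precisely what permits picking a \emph{single} optimal direction at each point, and the blow-up replaces the bi-Lipschitz cubeulation of $M$ by honest Euclidean grids, so that $(1+\eta)$-sharp slicing applies. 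Passing through the Radon--Nikodym derivative $D_\mu|T|(x)$ of $|T|$ with respect to the limiting energy measure $\mu=\lim_j(k-p_j)|du_j|^{p_j}dv_g$ then yields the global mass bound. Without this localization-plus-alignment step, no amount of Riemann-sum or translation-averaging on $M$ itself will produce the sharp constant, because the discrepancy is not a technicality of the slicing lemma but the genuine difference between mass (a supremum over covectors) and a single covector pairing.
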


\hspace{3mm} To prove Proposition \ref{massbd}, we continue to model our arguments on those of (\cite{ABOvar}, Section 3), proving first the following lemma for maps from the Euclidean unit ball $B_1^n(0)$.

\begin{lem}\label{aboballlem} Let $p_j\in (k-1,k)$ be a sequence with $\lim_{j\to\infty}p_j=k$, and let $u_j\in W^{1,p_j}(B_1^n(0),N)$ be a family of maps for which
\begin{equation}\label{usualepbd}
\limsup_{j\to\infty}(k-p_j)E_{p_j}(u_j,B_1^n(0))\leq \Lambda<\infty.
\end{equation}
Then for any simple unit $(n-k)$-covector $\beta\in \bigwedge^{n-k}(\mathbb{R}^n)$ and $\varphi\in C_c^{\infty}(B_1^n)$, we have the estimate
\begin{equation}\label{locsimpest}
\sigma_{k-1}\limsup_{j\to\infty}|\langle T_{\alpha}(u_j),\varphi\cdot \beta\rangle|\leq \lambda(\alpha)^{\frac{k}{k-1}}\Lambda\|\varphi\|_{L^{\infty}}.
\end{equation}
\end{lem}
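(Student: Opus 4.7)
The plan is to reduce Lemma \ref{aboballlem} to a $k$-dimensional slice estimate via Fubini in the direction of $\beta$, then prove the slice bound using the vortex-ball construction of Lemma \ref{ballgrowth}, following the strategy of Jerrard--Soner and Alberti--Baldo--Orlandi. By the $SO(n)$-invariance of both sides, I may assume $\beta = dy^1\wedge\cdots\wedge dy^{n-k}$ using coordinates $(y,z)\in\mathbb{R}^{n-k}\times\mathbb{R}^k$ on $B_1^n$. Since $dy^i\wedge\beta=0$ for each $i\leq n-k$, we have $d(\varphi\beta) = \bigl(\sum_{j=1}^k\partial_{z^j}\varphi\,dz^j\bigr)\wedge\beta$, and Fubini applied to the $L^1$ pullback $u_j^*\alpha$ (first verified for smooth $u_j$ and extended by the $W^{1,p}\to L^1_{k-1}$ continuity of Lemma \ref{pbestlem}) yields the slicing identity
\begin{equation*}
\langle T_\alpha(u_j),\varphi\beta\rangle = \pm\int_{B^{n-k}}\langle T_\alpha(u_j^y),\varphi(y,\cdot)\rangle_{B_y}\,dy,
\end{equation*}
where $u_j^y(z) := u_j(y,z)$ is the slice on the $k$-ball $B_y := \{z:(y,z)\in B_1^n\}$, and the inner bracket is a $0$-current pairing on $B_y$. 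The usual slicing of the energy gives $\int_{B^{n-k}}E_{p_j}(u_j^y,B_y)\,dy\leq E_{p_j}(u_j,B_1^n)$.

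The heart of the argument is the following $k$-dimensional slice estimate: for $v\in W^{1,p}(U,N)$ on an open $U\subset\mathbb{R}^k$, $\psi\in C_c^\infty(U)$, and every $\rho\in(0,\dist(\mathrm{spt}\,\psi,\partial U))$,
\begin{equation*}
|\langle T_\alpha(v),\psi\rangle| \leq \frac{\lambda(\alpha)^{p/(k-1)}}{\sigma_{k-1}\rho^{k-p}}\,(k-p)E_p(v,U)\|\psi\|_\infty + \mathrm{err}(v,\psi,\rho),
\end{equation*}
where the error term $\mathrm{err}$ involves $\|d\psi\|_\infty$ and is controlled by the ball radii produced in the construction. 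By Bethuel's density theorem, smooth maps with finite singular set are strongly dense in $W^{1,p}(U,N)$ for $p<k$; since the right side is continuous under strong $W^{1,p}$ approximation and $T_\alpha$ is continuous in $(C^1)^*$ by Lemma \ref{sdecomplem}, it suffices to treat such $v$. For these, Proposition \ref{epint} in dimension $k$ realizes $T_\alpha(v)$ as a discrete signed measure $\sum_\ell d_\ell\delta_{a_\ell}$. Applying Lemma \ref{ballgrowth} at scale $\rho$ produces disjoint balls $\{B_j^\rho\}$ covering the singularities; the choice of $\rho$ forces every ball meeting $\mathrm{spt}\,\psi$ to satisfy $B_j^\rho\subset U$, so by (\ref{bg2}) and (\ref{bg1}) we get $d_j^\rho \leq r_j^\rho/\rho \leq E_p(v,B_j^\rho)/F_p(\rho)$. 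Decomposing
\begin{equation*}
\sum_\ell d_\ell\psi(a_\ell) = \sum_j\psi(c_j)\,q_j + \sum_j\sum_{a_\ell\in B_j^\rho}d_\ell\bigl(\psi(a_\ell)-\psi(c_j)\bigr),
\end{equation*}
with $c_j$ chosen centers and $|q_j|=d_j^\rho$, the main term is bounded by $\|\psi\|_\infty\sum_j d_j^\rho\leq\|\psi\|_\infty E_p(v,U)/F_p(\rho)$, and substituting $F_p(\rho)=\sigma_{k-1}\rho^{k-p}/[\lambda(\alpha)^{p/(k-1)}(k-p)]$ gives the claim.

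Fixing $\rho > 0$ smaller than $\dist(\mathrm{spt}\,\varphi,\partial B_1^n)$ and integrating the slice estimate over $y\in B^{n-k}$ using the Fubini energy bound gives
\begin{equation*}
|\langle T_\alpha(u_j),\varphi\beta\rangle| \leq \frac{\lambda(\alpha)^{p_j/(k-1)}}{\sigma_{k-1}\rho^{k-p_j}}\,(k-p_j)E_{p_j}(u_j,B_1^n)\,\|\varphi\|_\infty + \mathrm{err}_j.
\end{equation*}
Taking $\limsup_{j\to\infty}$ with $\rho^{k-p_j}\to 1$, $\lambda(\alpha)^{p_j/(k-1)}\to\lambda(\alpha)^{k/(k-1)}$, and $\limsup (k-p_j)E_{p_j}(u_j)\leq \Lambda$ by (\ref{usualepbd}), then multiplying by $\sigma_{k-1}$, yields the asserted bound. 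The main technical obstacle is ensuring that the $\|d\varphi\|_\infty$-based error $\mathrm{err}_j$ vanishes in the limsup: following \cite{Jer} and \cite{ABOvar}, the resolution is to take $\rho = \rho(p_j)\to 0$ sufficiently slowly that $\rho^{k-p_j}\to 1$ while the ball radii $r_j^\rho$ shrink enough to kill the error. A subsidiary technical point is the justification of the slicing identity at the level of $L^1$ pullback forms, which is routine after approximating $u_j$ by smooth maps in $W^{1,p_j}$ and invoking Lemma \ref{pbestlem}.
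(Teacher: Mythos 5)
Your strategy---continuous Fubini slicing in the $\beta$-direction, then a $k$-dimensional degree estimate per slice---is genuinely different from the paper's, which instead discretizes via the translated grid $\mathcal{G}(\delta_j,a_j)$, sets $\tilde{u}_j = u_j\circ\Phi_k$, applies Proposition \ref{epint} to realize $T_\alpha(\tilde{u}_j)$ as a sum of constant multiplicities over dual cells $P(\sigma)$, and then estimates $\sum_\sigma|\theta(u_j,\sigma)|$ via Proposition \ref{abodgest}. Your slicing identity $\langle T_\alpha(u_j),\varphi\beta\rangle = \pm\int_{B^{n-k}}\langle T_\alpha(u_j^y),\varphi(y,\cdot)\rangle\,dy$ is correct, and the constant you track in the main term is right, so the framework is sound in spirit.

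The gap is in the error term. After decomposing $\sum_\ell d_\ell\psi(a_\ell) = \sum_j\psi(c_j)q_j + \sum_j\sum_{a_\ell\in B_j^\rho}d_\ell(\psi(a_\ell)-\psi(c_j))$, the second sum is bounded (at best) by $\|d\psi\|_\infty\sum_j r_j^\rho\bigl(\sum_{a_\ell\in B_j^\rho}|d_\ell|\bigr)$. But Lemma \ref{ballgrowth} controls only the \emph{net} degree $d_j^\rho = |\sum_{a_\ell\in B_j^\rho}d_\ell|$, not the absolute sum $\sum_{a_\ell\in B_j^\rho}|d_\ell|$; singularities of opposite sign can accumulate inside a ball at a cost in energy that degenerates with the cluster scale, and for the sequence $u_j^y$ with $p_j\to k$ there is no uniform bound on $\sum_\ell|d_\ell|$ per slice. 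Your appeal to taking $\rho = \rho(p_j)\to 0$ slowly does not resolve this: it helps the prefactor $r_j^\rho$ but not the uncontrolled absolute degree sum. Making this work would require a multi-scale refinement in the spirit of Jerrard's original vortex-ball analysis, telescoping over a geometric sequence of scales and bounding the contribution at each scale by the net-degree-times-radius estimate there; this is a real additional argument, not a routine step. The paper sidesteps this entirely: because the dual cells $P(\sigma)$ are fixed with $|P(\sigma)|=\delta^{n-k}$, the pairing $\int_{P(\sigma)}\varphi$ is bounded by $\delta^{n-k}\|\varphi\|_\infty$ with no $\|d\varphi\|_\infty$ oscillation error, and the only $\|d\zeta\|_\infty$ term that appears---from $\langle S_\alpha(u_j,\tilde{u}_j),d\zeta\rangle$---is controlled by the $L^p$-closeness estimate $\|u_j-\tilde{u}_j\|_{L^{p_j}}^{p_j}\lesssim(k-p_j)^{3p_j-2}$ of Lemma \ref{wkapplem} and the mass bound of Lemma \ref{sdecomplem}, which vanishes as $j\to\infty$.
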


\begin{proof} After a rotation, it is enough to prove (\ref{locsimpest}) in the case 
$$\beta=dx^1\wedge\cdots\wedge dx^{n-k}.$$
Following the notation of \cite{ABOvar}, for $a\in \mathbb{R}^n$ and $\delta>0$, let $\mathcal{G}(\delta,a)$ denote the grid
$$\mathcal{G}(\delta,a):=a+\delta\cdot \mathbb{Z}^n,$$
and let $R_j(\delta,a)$ denote the $j$-skeleton of the associated $n$-dimensional cubical complex for which $\mathcal{G}(\delta,a)$ gives the vertices. Denote by $\tilde{R}_k(\delta,a)$ the component
$$\tilde{R}_k(\delta,a):=a+(\delta \mathbb{Z}^{n-k}\times \mathbb{R}^k)$$
of $R_k(\delta,a)$ parallel to $\{0\}\times \mathbb{R}^k$. As in Lemma 3.11 of \cite{ABOvar}, a simple Fubini argument shows that for $u\in W^{1,p}(M,N)$ and $\eta>0$, we can find $a(u,\delta,\eta)\in \mathbb{R}^n$ such that
\begin{equation}\label{coarseslicebd}
\int_{R_j(\delta,a)\cap B_1}|du|^p\leq \frac{C}{\eta}\delta^{j-n}\int_{B_1^n}|du|^p
\end{equation}
for all $0\leq j\leq n$ and
\begin{equation}\label{sharpslicebd}
\int_{\tilde{R}_k(\delta,a)\cap B_1}|du|^p\leq (1+\eta)\delta^{k-n}\int_{B_1^n}|du|^p.
\end{equation}

\hspace{3mm} Now, fix some arbitrary $\varphi\in C_c^{\infty}(B_1^n)$ and $\eta>0$, and consider a family of maps $u_j\in W^{1,p_j}(B_1,N)$ satisfying (\ref{usualepbd}). As in the proof of Proposition \ref{epapprox}, we let
$$\delta_j:=(k-p_j)^3,$$
and let $\tilde{u}_j=u_j\circ \Phi_k$ with respect to the cubical complex associated to the grid $\mathcal{G}(\delta_j,a_j(\eta))$--where $a_j(\eta)$ is chosen to satisfy (\ref{coarseslicebd}) and (\ref{sharpslicebd}) with respect to $u_j$. Of course, $\tilde{u}_j$ is only well-defined on those $n$-cells strictly contained in $B^n_1(0)$, but since $\varphi$ is supported in the interior of $B_1$ and $\lim_{j\to\infty}\delta_j=0$, we see that $\tilde{u}_j$ is defined on $spt(\varphi)$ for $j$ sufficiently large.

\hspace{3mm} Setting $\zeta=\varphi dx^1\wedge \cdots\wedge dx^{n-k}$, we can then argue as in the proof of Corollary \ref{cpct1} to see that, for $j$ sufficiently large,
\begin{eqnarray*}
|\langle T_{\alpha}(u_j)-T_{\alpha}(\tilde{u_j}),\zeta\rangle|&=&|\langle S_{\alpha}(u_j,\tilde{u}_j),d\zeta\rangle|\\
&\leq & C(n,\alpha)\eta^{-1}\Lambda (k-p_j)\|d\zeta\|_{L^{\infty}}.
\end{eqnarray*}
In particular, it follows that
\begin{equation}\label{tudiffs}
\lim_{j\to\infty}|\langle T_{\alpha}(u_j)-T_{\alpha}(\tilde{u}_j),\zeta\rangle|=0.
\end{equation}

\hspace{3mm} On the other hand, by Proposition \ref{epint}, we know that
$$\langle T_{\alpha}(\tilde{u}_j),\zeta\rangle=\Sigma_{\sigma\subset\tilde{R}_k(\delta_j,a_j)\cap B_1} \theta(u_j,\sigma)\int_{P(\sigma)}\varphi,$$
where the sum is over all $k$-cells $\sigma\cong [0,\delta]^k$ contained in $\tilde{R}_k(\delta_j,a_j)\cap B_1$, and
$$\theta(u_j,\sigma)=\pm \int_{\partial\sigma}u_j^*(\alpha).$$
In this Euclidean setting, the component $P(\sigma)$ of the dual $(n-k)$-skeleton intersecting $\sigma$ is given by a single $(n-k)$-cell isometric to $[0,\delta]^{n-k}$, and as a consequence, we see that
\begin{equation}\label{tzetapair}
|\langle T_{\alpha}(\tilde{u}_j),\zeta\rangle|\leq \Sigma_{\sigma\subset \tilde{R}_k(\delta_j,a_j)\cap B_1}|\theta(u_j,\sigma)|\delta^{n-k}\|\varphi\|_{L^{\infty}}.
\end{equation}

\hspace{3mm} To estimate the coefficients $|\theta(u_j,\sigma)|$, we first appeal to Proposition \ref{abodgest} and (\ref{coarseslicebd}) to get the crude estimate
\begin{eqnarray*}
|\theta(u,\sigma)|^{1+p_j-k}&\leq & C(k,\alpha)\delta_j^{p_j-k}(k-p_j)[E_{p_j}(u_j,\sigma)+\delta_jE_{p_j}(u_j,\partial\sigma)]\\
&\leq & C(k,n,\alpha,\eta)\delta_j^{p_j-n}\Lambda.
\end{eqnarray*}
In particular, setting 
$$c_j:=[C(k,n,\alpha,\eta)\delta_j^{p_j-n}\Lambda]^{\frac{k-p_j}{1+p_j-k}},$$
we have the bound
\begin{equation}\label{thetafactorbd}
|\theta(u_j,\sigma)|^{k-p_j}\leq c_j,
\end{equation}
and recalling that $\delta_j=(k-p_j)^3$, we observe that
$$\lim_{j\to\infty}c_j=1.$$ 

\hspace{3mm} For a finer estimate, we appeal again to Proposition \ref{abodgest} to see that, for any $r>0$ and any $k$-cell $\sigma$,
$$\sigma_{k-1}|\theta(u,\sigma)|^{1+p_j-k}\leq \lambda(\alpha)^{\frac{p_j}{k-1}}(\delta_jr)^{p_j-k}(k-p_j)[E_{p_j}(u_j,\sigma)+C(k)r\delta_j E_p(u_j,\partial\sigma)].$$
Multiplying both sides above by $|\theta(u_j,\sigma)|^{k-p_j}$ and appealing to (\ref{thetafactorbd}), we then arrive at the bound
$$\sigma_{k-1}|\theta(u_j,\sigma)|\leq c_j\lambda(\alpha)^{\frac{p_j}{k-1}}(\delta_jr)^{p_j-k}(k-p_j)[E_{p_j}(u,\sigma)+C(k)r\delta_jE_{p_j}(u,\sigma)].$$
Summing over $k$-cells $\sigma\subset \tilde{R}_k(\delta_j,a_j)$, and appealing to (\ref{coarseslicebd}) and (\ref{sharpslicebd}), we find that
\begin{eqnarray*}
\sigma_{k-1}\cdot \Sigma_{\sigma\subset \tilde{R}_k(\delta_j,a_j)}|\theta(u_j,\sigma)|&\leq &c_j\lambda(\alpha)^{\frac{p_j}{k-1}}(\delta_jr)^{p_j-k}(k-p_j)\\
&&\cdot\left(\int_{\tilde{R}_k\cap B_1}|du_j|^{p_j}+C(k)r\delta \int_{R_{k-1}\cap B_1}|du_j|^{p_j}\right)\\
&\leq &c_j\lambda(\alpha)^{\frac{p_j}{k-1}}(\delta_jr)^{p_j-k}(k-p_j)\\
&&\cdot [(1+\eta)+\frac{C(n,k)}{\eta} r]\delta_j^{k-n}E_{p_j}(u_j,B_1).
\end{eqnarray*}

\hspace{3mm} Choosing $r=\eta^2$ above, and returning to (\ref{tzetapair}), we arrive at the estimate
\begin{equation}
\sigma_{k-1}|\langle T_{\alpha}(\tilde{u}_j),\zeta\rangle|\leq c_j\lambda(\alpha)^{\frac{p_j}{k-1}}(\delta_j \eta^2)^{p_j-k} [1+C'(n,k)\eta]\Lambda\|\varphi\|_{L^{\infty}}.
\end{equation}
Now, since $\lim_{j\to\infty}c_j=1$, and likewise $\lim_{j\to\infty}(\delta_j\eta^2)^{p_j-k}=1$, we deduce that
\begin{equation}
\limsup_{j\to\infty}\sigma_{k-1}|\langle T_{\alpha}(\tilde{u}_j),\zeta\rangle|\leq \lambda(\alpha)^{\frac{k}{k-1}}[1+C\eta]\Lambda \|\varphi\|_{L^{\infty}}.
\end{equation}
By (\ref{tudiffs}), this is equivalent to the statement that
\begin{equation}
\limsup_{j\to\infty}\sigma_{k-1}|\langle T_{\alpha}(u_j),\zeta\rangle|\leq \lambda(\alpha)^{\frac{k}{k-1}}[1+C\eta]\Lambda \|\varphi\|_{L^{\infty}};
\end{equation}
finally, taking $\eta\to 0$, we arrive at the desired estimate.
\end{proof}

\hspace{3mm} With Lemma \ref{aboballlem} in hand, we can now prove Proposition \ref{massbd} via a blow-up argument.

\begin{proof}{(Proof of Proposition \ref{massbd})}

\hspace{3mm} Let $u_j\in W^{1,p_j}(M,N)$ be a sequence of maps as in Proposition \ref{cpct1}, for which
$$\limsup_{j\to\infty}(k-p_j)E_{p_j}(u_j)\leq \Lambda$$
and
$$\lim_{j\to\infty}T_{\alpha}(u_j)=T\in \mathcal{Z}_{n-k}(M;\mathbb{Z}).$$
Passing to a further subsequence, we can also assume that the normalized energy measures
$$\mu_j:=(k-p_j)|du_j|^{p_j}dv_g$$
converge weakly in $(C^0)^*$ to a limiting Radon measure $\mu$ satisfying
$$\mu(M)\leq \Lambda.$$
Denote by $|T|$ the weight measure associated to the current $T$. By standard results on derivates of Radon measures (see, e.g., \cite{Sim}, Section 4 or \cite{Fed}, Section 2.9), the quantity
$$D_{\mu}|T|(x):=\lim_{r\to 0}\frac{|T|(B_r(x))}{\mu(B_r(x))}$$
is well-defined for $|T|$-a.e. $x\in M$, and to establish the desired mass bound for $T$, it will suffice to show that
\begin{equation}\label{derivbd}
D_{\mu}|T|(x)\leq \sigma_{k-1}^{-1}\lambda(\alpha)^{\frac{k}{k-1}}\text{ for }|T|-a.e.\text{ }x\in M.
\end{equation}

\hspace{3mm} Now, on a small geodesic ball $B_r(x)\subset M$, denote by 
$$\Phi_{x,r}: B_r(x)\to B_1^n(0)\subset T_xM$$
the dilation map
$$\Phi_{x,r}(y):=\frac{1}{r}\exp_x^{-1}(y),$$
and set
$$\mu_{x,r}:=(\Phi_{x,r})_{\#}\mu,\text{\hspace{3mm} }T_{x,r}:=(\Phi_{x,r})_{\#}T.$$
Since $T$ is integer rectifiable, for $|T|$-almost every $x\in M$, the currents $T_{x,r}$ converge weakly 
\begin{equation}\label{goodblowup}
T_{x,r}\rightharpoonup \theta(x) [P]\in\mathcal{D}_{n-k}(B_1^n),
\end{equation}
to an oriented $(n-k)$-plane $P$ in $\mathbb{R}^n$ with multiplicity 
$$\theta(x):=\lim_{r\to 0}\frac{|T|(B_r(x))}{\omega_{n-k}r^{n-k}},$$
where $\omega_{n-k}:=\mathcal{L}^{n-k}(B_1^{n-k}(0))$ (see, for instance, \cite{Sim}, Section 32).

\hspace{3mm} Now, let $x\in M$ be a point at which $D_{\mu}|T|(x)$ is defined and (\ref{goodblowup}) holds, and observe that the density $\Theta_{n-k}(\mu,x)$ is then well-defined (though possibly infinite), as
\begin{eqnarray*}
\Theta_{n-k}(\mu,x)&=&\lim_{r\to 0}\frac{\mu(B_r(x))}{\omega_{n-k}r^{n-k}}\\
&=&\theta(x)\lim_{r\to 0}\frac{\mu(B_r(x))}{|T|(B_r(x))}\\
&=&\theta(x)\frac{1}{D_{\mu}|T|(x)}.
\end{eqnarray*}
In particular, to prove (\ref{derivbd}), we just need to show that
\begin{equation}\label{denscomp}
\sigma_{k-1}\theta(x)\leq \lambda(\alpha)^{\frac{k}{k-1}}\Theta_{n-k}(\mu,x).
\end{equation}

\hspace{3mm} If $\Theta_{n-k}(\mu,x)=\infty$, then (\ref{denscomp}) holds trivially, so assume that 
$$\Theta_{n-k}(\mu,x)<\infty,$$
and consider a sequence $r_{\ell}\to 0$ for which
$$\mu(\partial B_{r_{\ell}}(x))=0.$$
For each $r_{\ell}$, we then have
$$\mu(B_{r_{\ell}}(x))=\lim_{j\to\infty}(k-p_j)\int_{B_{r_{\ell}}(x)}|du_j|^{p_j},$$
and consequently
\begin{equation}\label{ballmeasconv}
r_{\ell}^{k-n}\mu(B_{r_{\ell}}(x))=\lim_{j\to\infty}r_{\ell}^{p_j-n}(k-p_j)\int_{B_{r_{\ell}}(x)}|du_j|^{p_j}.
\end{equation}
Moreover, since the convergence $T_{\alpha}(u_j)\to T$ established in Proposition \ref{cpct1} is convergence in the $(C^1)^*$ norm, we also see that
\begin{equation}\label{balltconv}
\lim_{j\to\infty}\|T-T_{\alpha}(u_j)\|_{(C_c^1(B_{r_{\ell}}(x))^*}=0.
\end{equation}

\hspace{3mm} It follows from (\ref{ballmeasconv}) and (\ref{balltconv}) that for each $r_{\ell}$, we can select $p_{\ell}=p_{j_{\ell}}$ and $u_{\ell}=u_{j_{\ell}}$ such that
\begin{equation}
|\frac{\mu(B_{r_{\ell}}(x))}{r_{\ell}^{n-k}}-r_{\ell}^{p_{\ell}-n}(k-p_{\ell})\int_{B_{r_{\ell}}(x)}|du_{\ell}|^{p_{\ell}}|<\frac{1}{\ell}
\end{equation}
and 
\begin{equation}
r_{\ell}^{k-n-1}\|T-T_{\alpha}(u_{\ell})\|_{(C_c^1(B_{r_{\ell}}(x)))^*}<\frac{1}{\ell}.
\end{equation}
Defining the maps $v_{\ell}\in W^{1,p_{\ell}}(B_1^n(0),N)$ by 
$$v_{\ell}:=u\circ \Phi_{x,r_{\ell}}^{-1},$$
we then see that
\begin{equation}\label{venerglim}
\lim_{\ell\to\infty}(k-p_{\ell})E_{p_{\ell}}(v_{\ell},B_1)=\omega_{n-k}\Theta_{n-k}(\mu,x)
\end{equation}
and
\begin{equation}\label{vtlim}
\lim_{\ell\to \infty}\langle T_{\alpha}(v_{\ell}),\zeta\rangle=\lim_{r\to 0}\langle T_{x,r},\zeta\rangle=\theta(x)\langle [P],\zeta\rangle
\end{equation}
for all $\zeta\in \Omega_c^{n-k}(B_1^n(0))$.

\hspace{3mm} Now, applying Lemma \ref{aboballlem} to the maps $v_{\ell}$ and the simple unit $(n-k)$-covector $\beta$ orienting $[P]$, we deduce from (\ref{venerglim}) and (\ref{vtlim}) that
\begin{equation}\label{phipair}
\sigma_{k-1}\theta(x)\int_{P}\varphi=\sigma_{k-1}\theta(x)\langle [P],\varphi \cdot \beta\rangle \leq \lambda(\alpha)^{\frac{k}{k-1}}\omega_{n-k}\Theta_{n-k}(\mu,x)\|\varphi\|_{L^{\infty}}
\end{equation}
for any $\varphi\in C_c^{\infty}(B_1)$. Finally, applying (\ref{phipair}) to a reasonable approximation $\varphi_j\to\chi_{B_1(0)}$ with $\|\varphi_j\|_{L^{\infty}}\leq 1$, we obtain in the limit
$$\sigma_{n-k}\theta(x)\omega_{n-k}\leq \lambda(\alpha)^{\frac{k}{k-1}}\omega_{n-k}\Theta_{n-k}(\mu,x).$$
Dividing through by $\omega_{n-k}$ gives precisely (\ref{denscomp}), and the proposition follows.
\end{proof}

\hspace{3mm} Finally, combining the results of Proposition \ref{epapprox}, Corollary \ref{cpct1}, and Proposition \ref{massbd} above, together with a simple contradiction argument, we obtain the following strong version of the compactness result.

\begin{thm}\label{bigcpctthm} For any $\Lambda<\infty$ and $\eta>0$, there exists $q(M,\alpha,\Lambda,\eta)\in (k-1,k)$ such that if $p\in (q,k)$, and $u\in W^{1,p}(M,N)$ satisfies
$$(k-p)E_p(u)\leq \Lambda,$$
then there exists a map $\tilde{u}\in \mathcal{E}^p(M,N)$ satisfying
\begin{equation}
E_p(\tilde{u})\leq C(M)\frac{\Lambda}{(k-p)^2},
\end{equation}
\begin{equation}
\|u-\tilde{u}\|^p_{L^p(M)}\leq (k-p)^{3p-2}C(M)\Lambda,
\end{equation}
and an integral $(n-k)$-cycle $T\in \mathcal{Z}_{n-k}(M;\mathbb{Z})$ and integral $(n+1-k)$-current $\Gamma\in \mathcal{I}_{n+1-k}(M;\mathbb{Z})$ such that
\begin{equation}
T_{\alpha}(\tilde{u})-T=\partial\Gamma,
\end{equation}
\begin{equation}
\sigma_{k-1}\mathbb{M}(T)\leq \lambda(\alpha)^{\frac{k}{k-1}}\Lambda,
\end{equation}
and
\begin{equation}
\mathbb{M}(\Gamma)<\eta.
\end{equation}
\end{thm}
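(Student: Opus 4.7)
The plan is to argue by contradiction, distilling the three main results of this section into a single uniform statement. Suppose the theorem fails for some $\Lambda < \infty$ and $\eta > 0$. Then we can find a sequence $p_j \in (k-1,k)$ with $p_j \to k$ and maps $u_j \in W^{1,p_j}(M,N)$ satisfying $(k-p_j) E_{p_j}(u_j) \leq \Lambda$, such that no decomposition of the required form exists for $u_j$.

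First, I would apply Proposition \ref{epapprox} to each $u_j$ to produce maps $\tilde{u}_j \in \mathcal{E}^{p_j}(M,N)$ verifying the first two stated bounds (the $L^p$ closeness and the energy estimate), along with the uniform mass bound $\mathbb{M}(T_\alpha(\tilde{u}_j)) \leq C(M,\alpha,\Lambda)$. These bounds are for free; they hold term by term for any $p_j$ sufficiently close to $k$. Next, I invoke Corollary \ref{cpct1} (passing to a subsequence) to obtain an integral cycle $T \in \mathcal{Z}_{n-k}(M;\mathbb{Z})$ such that $T_\alpha(u_j) \to T$ in $(C^1)^*$, and by Proposition \ref{massbd} this limit satisfies the sharp mass bound $\sigma_{k-1}\mathbb{M}(T) \leq \lambda(\alpha)^{k/(k-1)} \Lambda$. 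This is the cycle I will use for each large $j$.

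The crucial missing piece is to rewrite $T_\alpha(\tilde{u}_j) - T$ as $\partial \Gamma_j$ with $\Gamma_j$ integer rectifiable and $\mathbb{M}(\Gamma_j) \to 0$. This is already supplied in the proof of Corollary \ref{cpct1}: from Lemma \ref{sdecomplem} and the bounds of Proposition \ref{epapprox} one has $T_\alpha(u_j) - T_\alpha(\tilde{u}_j) = \partial S_\alpha(u_j,\tilde{u}_j)$ with $\mathbb{M}(S_\alpha(u_j,\tilde{u}_j)) \to 0$, while the Federer--Fleming compactness theorem applied to the integral cycles $T_\alpha(\tilde{u}_j)$ of uniformly bounded mass produces integer rectifiable currents $\Gamma_j' \in \mathcal{I}_{n+1-k}(M;\mathbb{Z})$ with $\partial \Gamma_j' = T_\alpha(\tilde{u}_j) - T$ and $\mathbb{M}(\Gamma_j') \to 0$. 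Taking $\Gamma_j := \Gamma_j'$ then yields all the required properties simultaneously, and for $j$ sufficiently large $\mathbb{M}(\Gamma_j) < \eta$, contradicting the assumed failure.

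The only mild subtlety is ensuring that the $(C^1)^*$-convergence $T_\alpha(u_j) \to T$ transfers correctly to yield an \emph{integer rectifiable} boundary-filling $\Gamma_j$ of vanishing mass for $T_\alpha(\tilde{u}_j) - T$ (and not merely an arbitrary current of small flat norm); but this is exactly the content of the Federer--Fleming compactness step invoked in the proof of Corollary \ref{cpct1}, so no new work is required. I do not anticipate any genuine obstacle — the theorem is effectively the uniform repackaging of Propositions \ref{epapprox} and \ref{massbd} together with Corollary \ref{cpct1}, and the subsequence-plus-contradiction device is the standard way to convert the asymptotic statements into the quantitative one claimed here.
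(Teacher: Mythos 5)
Your proposal is correct and is exactly the argument the paper intends: the paper states that the theorem is obtained by ``combining the results of Proposition \ref{epapprox}, Corollary \ref{cpct1}, and Proposition \ref{massbd} above, together with a simple contradiction argument,'' and your write-up is precisely that contradiction argument, including the correct observation that the integer-rectifiable filling $\Gamma$ of small mass comes from the Federer--Fleming compactness step already performed in the proof of Corollary \ref{cpct1}.
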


\end{subsection}

\section{The Lower Bounds for $\gamma_p^*(u,v)$}\label{lbdssec}

\subsection{The Almgren Isomorphism and Min-Max Widths}\label{almsubsec}\hspace{30mm}

\hspace{3mm} Before proving Theorem \ref{lbdsthm}, we recall some basic facts about the map $\pi_1(\mathcal{Z}_m(M;\mathbb{Z}),0)\to H_{m+1}(M;\mathbb{Z})$ constructed by Almgren in \cite{A1}, and make precise the definition of the min-max widths that we use to obtain lower bounds. 

\hspace{3mm} As in \cite{A1}, we topologize the space $\mathcal{Z}_m(M;\mathbb{Z})$ of integral $m$-cycles via the flat norm 
$$\mathbb{F}(T):=\inf\{\mathbb{M}(T')+\mathbb{M}(S)\mid T=T'+\partial S,\text{ }T'\in \mathcal{Z}_m(M;\mathbb{Z}),\text{ }S\in \mathcal{I}_{m+1}(M;\mathbb{Z})\}.$$
In his dissertation \cite{A1}, Almgren exhibited an isomorphism 
$$\pi_{\ell}(\mathcal{Z}_m(M;\mathbb{Z}),0)\cong H_{\ell+m}(M;\mathbb{Z})$$
between the homotopy groups of $\mathcal{Z}_m$ and the homology groups of $M$, which he later employed in \cite{A2} for the purpose of constructing minimal submanifolds via min-max methods. Recent years have seen a tremendous renewal of interest in the topology of spaces of cycles and related min-max constructions of minimal submanifolds (particularly in codimension one, where Pitts's work provides a powerful regularity theory \cite{Pi}): we refer the interested reader to \cite{Guth}, \cite{MN}, \cite{LMN}, \cite{IMN}, \cite{MNS}, \cite{Song}, and references therein for some recent developments. 

\hspace{3mm} In the case $\ell=1$ of interest for our present purposes, the map
$$\Psi: \pi_1(\mathcal{Z}_m(M;\mathbb{Z}),0)\to H_{m+1}(M;\mathbb{Z})$$
is fairly simple to describe. First, by two applications of the Federer-Fleming isoperimetric inequality on manifolds (\cite{Fed}, Theorem 4.4.2), there exists a constant $\epsilon(M)>0$ such that if 
$$R\in \mathcal{Z}_{m+1}(M;\mathbb{Z})\text{ with }\mathbb{M}(R)<\epsilon,$$
then $R=\partial \Omega$ for some $\Omega\in \mathcal{I}_{m+2}(M;\mathbb{Z})$, and there exists also $\delta>0$ such that if 
$$T\in \mathcal{Z}_m(M;\mathbb{Z})\text{ with }\mathbb{F}(T)<\delta,$$
then $T=\partial S$ for some $S\in \mathcal{I}_{m+1}(M;\mathbb{Z})$ such that
$$\mathbb{M}(S)<\frac{\epsilon}{2}.$$

\hspace{3mm} As a consequence, if $T_0,T_1,\ldots,T_r$ is a finite sequence in $\mathcal{Z}_m(M;\mathbb{Z})$ with $T_0=T_r=0$ and 
\begin{equation}\label{flatfine}
\mathbb{F}(T_i-T_{i-1})<\delta
\end{equation}
for every $i=1,\ldots,r$ (as can be obtained, for instance, by sampling points from a loop in $\pi_1(\mathcal{Z}_m(M;\mathbb{Z}),0)$), then we can find $S_i\in \mathcal{I}_{m+1}$ for which 
\begin{equation}\label{fillins}
\partial S_i=T_i-T_{i-1}\text{ and }\mathbb{M}(S_i)<\frac{\epsilon}{2}.
\end{equation}
The sum $\overline{S}=\Sigma_{i=1}^rS_i$ then defines a cycle in $\mathcal{Z}_{m+1}(M;\mathbb{Z})$, and for any other choices $S_i'\in \mathcal{I}_{m+1}$ satisfying (\ref{fillins}), the differences $R_i:=S_i'-S_i$ are $(m+1)$-cycles of mass $\mathbb{M}(R_i)<\epsilon$, so that
$$S_i'=S_i+\partial\Omega_i$$
for some $\Omega_i\in \mathcal{I}_{m+2}(M;\mathbb{Z})$. In particular, it follows that the homology class $[\overline{S}]\in H_{m+1}(M;\mathbb{Z})$ of $\overline{S}$ is independent of the choice of $S_i$ in (\ref{fillins}). In a similar way (taking $\delta$ in (\ref{flatfine}) smaller, if necessary), it is shown in \cite{A1} that the homology class $[\overline{S}]$ produced in this way remains constant over sequences $\{T_i\}$, $\{T_i'\}$ which are close in an appropriate sense, and it is this observation which accounts for the well-definedness of the map $\Psi: \pi_1(\mathcal{Z}_m(M;\mathbb{Z}),0)\to H_{m+1}(M;\mathbb{Z})$.

\hspace{3mm} Given a homotopy class $\Pi\in \pi_1(\mathcal{Z}_m(M;\mathbb{Z}),0)$, for the purposes of intuition, the min-max width ${\bf L}(\Pi)$ can be identified with the quantity
\begin{equation}\label{naivewidth}
\inf_{F\in \Pi}\sup_{y\in S^m}\mathbb{M}(F(y)),
\end{equation}
giving the infimum over all families $F\in \Pi$ of the maximal mass attained by a cycle in the family $F$. For the purposes of this paper, however, it is convenient to define the min-max widths in terms of finite sequences $\{T_j\}$ in $\mathcal{Z}_m$ for which adjacent cycles are close in flat norm. The interested reader can compare the definition given below with those of \cite{A2}, \cite{Pi} (which require fineness in stronger norms), referring to interpolation procedures like those described in (\cite{Gu}, Section 8). 

\hspace{3mm} For $\delta>0$, we denote by $\mathscr{S}_{m,\delta}(M)$ the collection of all finite sequences $\{T_i\}_{i=0}^r$ of integral $m$-cycles $T_i\in \mathcal{Z}_m(M;\mathbb{Z})$ for which
$$T_0=T_r=0\text{ and }\mathbb{F}(T_{i-1}-T_i)<\delta\text{ for every }i=1,\ldots,r.$$
By the discussion in the preceding paragraphs (or see again \cite{A1}, \cite{A2} Chapter 13), there are constants $\epsilon_0(M)>0$ and $\delta_0(M)>0$ such that for $\delta<\delta_0$, the map
$$\Psi: \mathscr{S}_{m,\delta}(M)\to H_{m+1}(M;\mathbb{Z})$$
given by
$$\Psi(\{T_i\})=[\Sigma_{i=1}^rS_i]$$
for some $S_i\in \mathcal{I}_{m+1}(M;\mathbb{Z})$ satisfying
\begin{equation}\label{fillinconds}
\partial S_i=T_i-T_{i-1}\text{ and }\mathbb{M}(S_i)<\frac{\epsilon_0}{2}
\end{equation}
is well-defined, independent of the choice of $\{S_i\}$ satisfying (\ref{fillinconds}).

\hspace{3mm} Given a homology class $\xi \in H_{m+1}(M;\mathbb{Z})$ and $\delta<\delta_0$, we then set
\begin{equation}\label{lmdeltadef}
{\bf L}_{m,\delta}(\xi):=\inf\{\max_{0\leq i\leq r}\mathbb{M}(T_i)\mid \{T_i\}_0^r\in \mathscr{S}_{m,\delta}(M),\text{ }\Psi(\{T_i\})=\xi\},
\end{equation}
and define
\begin{equation}\label{lmdef}
{\bf L}_m(\xi):=\lim_{\delta\to 0}{\bf L}_{m,\delta}(\xi)=\sup_{\delta>0}{\bf L}_{m,\delta}(\xi).
\end{equation}
By another simple application of the isoperimetric inequality, one finds that
$$\inf_{\xi\neq 0}{\bf L}_m(\xi)>0;$$ 
to see this, note that there exists a constant $\eta_1(M)>0$ such that for every $T\in \mathcal{Z}_m(M;\mathbb{Z})$ with $\mathbb{M}(T)<\eta_1$, there is some $R\in \mathcal{I}_{m+1}(M;\mathbb{Z})$ satisfying
\begin{equation}\label{smallmasscase}
T=\partial R\text{ and }\mathbb{M}(R)<\frac{\epsilon_0}{4}.
\end{equation}
In particular, if we have $\{T_i\}\in \mathscr{S}_{m,\delta}(M)$ with $\max_i \mathbb{M}(T_i)<\eta_1$, then for each $i=1,\ldots,r-1$ we can choose $R_i\in \mathcal{I}_{m+1}(M;\mathbb{Z})$ satisfying (\ref{smallmasscase}) with respect to $T_i$, and setting $R_0=R_r=0$, we see that the currents
$$S_i:=R_i-R_{i-1}$$
satisfy (\ref{fillinconds}). But evidently $\Sigma_{i=1}^rS_i=0$, so it follows that $\Psi(\{T_i\})=0.$

\hspace{3mm} For the families of cycles $\{T_i\}\in \mathscr{S}_{n-k,\delta}(M)$ arising in the proof of Theorem \ref{lbdsthm}, we can determine the associated homology class $\Psi(\{T_i\})$ only at the level of real homology. For any real homology class $\overline{\xi} \in H_{m+1}(M;\mathbb{R})$ containing an integral representative $S\in \mathcal{Z}_{m+1}(M;\mathbb{Z}),$ we therefore define the real-homological widths
\begin{equation}
{\bf L}_{m,\mathbb{R}}(\overline{\xi}):=\min\{{\bf L}_m(\xi)\mid \xi\in H_{m+1}(M;\mathbb{Z}),\text{ }\xi\equiv \overline{\xi}\text{ in }H_{m+1}(M;\mathbb{R})\}.
\end{equation}
Equivalently, we can set
\begin{equation}
{\bf L}_{m,\mathbb{R},\delta}(\overline{\xi}):=\min\{{\bf L}_{m,\delta}(\xi)\mid \xi\equiv \overline{\xi}\text{ in }H_{m+1}(M;\mathbb{R})\},
\end{equation}
and define ${\bf L}_{m,\mathbb{R}}(\overline{\xi})$ by
$${\bf L}_{m,\mathbb{R}}(\overline{\xi}):=\lim_{\delta\to 0}{\bf L}_{m,\mathbb{R},\delta}(\overline{\xi})=\sup_{\delta>0}{\bf L}_{m,\mathbb{R},\delta}(\overline{\xi}).$$

\hspace{3mm} The need to work with real homology in the proof of Theorem \ref{lbdsthm} is due in part to the fact that the currents $S_i\in \mathcal{D}_{n+1-k}(M)$ that we use to connect adjacent $(n-k)$-cycles $T_i-T_{i-1}=\partial S_i$ are not integer-rectifiable. However, from the results of Section \ref{homints}, we will see that they have the form $S_i=\Gamma_i+\partial R_i$, where $\Gamma_i\in \mathcal{I}_{n+1-k}(M;\mathbb{Z})$ and $R_i\in \mathcal{D}_{n+2-k}(M)$. The following lemma then allows us to compare the masses $\mathbb{M}(T_i)$ to the real-homological widths ${\bf L}_{m,\mathbb{R}}(\overline{\xi})$.

\begin{lem}\label{fillinlem} Given $\delta>0$ and $L_1<\infty$, there exists $\eta(M,L_1,\delta)>0$ such that if $T_0,T_1,\ldots,T_r\in \mathcal{Z}_m(M;\mathbb{Z})$ is a sequence of integral $m$-cycles of mass 
\begin{equation}\label{fillinlem1}
\mathbb{M}(T_i)\leq L_1,
\end{equation}
with $T_0=T_r=0$, for which there exist $(m+1)$-currents of the form 
$$S_1,\ldots,S_r\in \mathcal{I}_{m+1}(M;\mathbb{Z})+\partial \mathcal{D}_{m+2}(M)$$ 
such that
\begin{equation}\label{fillinlem2}
\partial S_i=T_i-T_{i-1}\text{ and }\mathbb{M}(S_i)<\eta,
\end{equation}
then $\{T_i\}\in \mathscr{S}_{m,\delta}(M)$, with
\begin{equation}\label{realalmiso}
\Psi(\{T_i\})\equiv [\Sigma_{i=1}^rS_i]\text{ in }H_{m+1}(M;\mathbb{R}).
\end{equation}
\end{lem}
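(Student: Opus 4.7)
The plan is to prove two things: that $\{T_i\} \in \mathscr{S}_{m,\delta}(M)$, and that $\Psi(\{T_i\}) \equiv [\Sigma S_i]$ in $H_{m+1}(M;\mathbb{R})$. I would begin with the flat-norm estimate by contradiction and compactness. Since $\partial S_i = T_i - T_{i-1}$, for any $C^1$ test form $\omega$,
$$|\langle T_i - T_{i-1}, \omega\rangle| = |\langle S_i, d\omega\rangle| \leq \eta \|d\omega\|_\infty.$$
If the conclusion failed, one would extract a sequence of integer $m$-cycles $U^{(\nu)} \in \mathcal{Z}_m(M;\mathbb{Z})$ with $\mathbb{M}(U^{(\nu)}) \leq 2L_1$, $U^{(\nu)} \to 0$ in $(C^1)^*$, yet $\mathbb{F}(U^{(\nu)}) \geq \delta$. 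The Federer-Fleming compactness theorem would extract a flat-convergent subsequence with integer rectifiable limit $U^*$; since flat convergence implies weak $(C^1)^*$ convergence, $U^* = 0$, contradicting $\mathbb{F}(U^{(\nu)}) \geq \delta$.

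Shrinking $\eta$ further and invoking the integer isoperimetric inequality, I would then produce integer rectifiable fillings $\tilde A_i \in \mathcal{I}_{m+1}(M;\mathbb{Z})$ with $\partial \tilde A_i = T_i - T_{i-1}$ and $\mathbb{M}(\tilde A_i)$ as small as desired, so that $\Psi(\{T_i\}) = [\Sigma \tilde A_i]$. Using the given decomposition $S_i = A_i + \partial B_i$,
$$\Sigma S_i - \Sigma \tilde A_i = \Sigma(A_i - \tilde A_i) + \partial \Sigma B_i,$$
and the boundary term is trivial in real homology, so it suffices to show that the integer rectifiable cycle $Z := \Sigma(A_i - \tilde A_i) \in \mathcal{Z}_{m+1}(M;\mathbb{Z})$ is trivial in $H_{m+1}(M;\mathbb{R})$. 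I would fix representatives $\omega_1, \ldots, \omega_N \in \mathcal{A}^{m+1}(M)$ for a $\mathbb{Z}$-basis of the integer-periodic lattice in $H^{m+1}_{dR}(M)$, with $\|\omega_j\|_\infty \leq C_0(M)$. Since each $A_i - \tilde A_i$ is integer rectifiable and each $\omega_j$ has integer periods, $\langle A_i - \tilde A_i, \omega_j\rangle \in \mathbb{Z}$; and since $\omega_j$ is closed and $S_i - A_i = \partial B_i$,
$$\langle A_i - \tilde A_i, \omega_j\rangle = \langle S_i - \tilde A_i, \omega_j\rangle, \qquad |\langle A_i - \tilde A_i, \omega_j\rangle| \leq C_0(\mathbb{M}(\tilde A_i) + \eta).$$
Choosing $\eta$ and $\mathbb{M}(\tilde A_i)$ small enough that this bound is strictly less than $1$, each integer pairing vanishes, so $\langle Z, \omega_j\rangle = 0$ for every $j$, and hence $[Z]_{\mathbb{R}} = 0$.

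The main obstacle is that the length $r$ of the chain is not controlled by $L_1$ or $\delta$, so a bound of the form $|\langle Z, \omega_j\rangle| \leq r \cdot (\text{small})$ would be useless. The argument circumvents this by forcing each individual pairing $\langle A_i - \tilde A_i, \omega_j\rangle$ to be an integer strictly bounded by $1$ in absolute value, hence zero; finiteness of the basis $\{\omega_j\}$ then guarantees a single threshold $\eta = \eta(M, L_1, \delta)$ that works uniformly in $r$.
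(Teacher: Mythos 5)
Your proposal is correct and takes essentially the same route as the paper: the flat-norm bound via Federer--Fleming compactness, then pairing each term of the telescoping sum against a finite set of integer-lattice forms in $\mathcal{A}^{m+1}(M)$ and using integrality plus a mass bound smaller than $1$ to force each pairing to vanish, thereby defeating the uncontrolled length $r$. The only cosmetic difference is that the paper fixes the Almgren fillings $S_i'$ with $\mathbb{M}(S_i')<\epsilon_0/2$ and arranges $C(M)(\eta+\epsilon_0/2)<1$ by (re)choosing the universal constant $\epsilon_0(M)$ small, whereas you shrink $\eta$ to make the flat norms, and hence the filling masses $\mathbb{M}(\tilde A_i)$, small directly; these are logically equivalent.
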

\begin{proof} To begin, we claim that there exists $\eta(M,L_1,\delta)>0$ such that $\mathbb{F}(T)<\delta$ for any integral cycle $T$ with $$\mathbb{M}(T)\leq 2L_1\text{ and }\|T\|_{(C^1)^*}<\eta.$$
Indeed, this is a simple consequence of the Federer-Fleming compactness theorem (\cite{Fed}, Theorem 4.2.17), since any sequence of integral cycles converging weakly to $0$ with uniformly bounded mass must also converge to $0$ in the flat norm. Applying this claim to the differences $T_i-T_{i-1}$ for a family of cycles $\{T_i\}$ satisfying (\ref{fillinlem1})-(\ref{fillinlem2}), we immediately deduce that $\{T_i\}\in \mathscr{S}_{m,\delta}(M)$ for $\eta(M,L_1,\delta)>0$ sufficiently small.

\hspace{3mm} To check (\ref{realalmiso}), fix (as in \cite{GMS1}, Sect. 5.4.1) a collection $\omega^1,\ldots,\omega^{b_{m+1}(M)}\in \mathcal{A}^{m+1}(M)$ of closed $(m+1)$-forms generating the integer lattice in $H^{m+1}_{dR}(M)$, and let
$$C(M):=\max_{1\leq i\leq b_{m+1}(M)}\|\omega^i\|_{L^{\infty}}.$$
Given $\{T_i\}\in \mathscr{S}_{m,\delta}(M)$ satisfying (\ref{fillinlem1})-(\ref{fillinlem2}), let $S_i'\in \mathcal{I}_{m+1}(M;\mathbb{Z})$ be a family of integer rectifiable $(m+1)$-currents satisfying 
\begin{equation}\label{intfillincond}
\partial S_i'=T_i-T_{i-1}\text{ and }\mathbb{M}(S_i')<\frac{\epsilon_0}{2}.
\end{equation}
For each $i=1,\ldots,r$, the difference
$$R_i:=S_i-S_i'$$
is then a cycle of the form $R_i\in \mathcal{Z}_{m+1}(M;\mathbb{Z})+\partial \mathcal{D}_{m+2}(M)$, and as a consequence, we see that
\begin{equation}\label{homintpairing}
\langle R_i,\omega\rangle\in \mathbb{Z}
\end{equation}
for every $\omega \in \mathcal{A}^{m+1}(M)$. In particular, (\ref{homintpairing}) holds for the generators $\omega^1,\ldots,\omega^{b_{m+1}(M)}$ chosen above. 

\hspace{3mm} On the other hand, the mass bounds in (\ref{fillinlem1}) and (\ref{intfillincond}) imply that
$$|\langle R_i,\omega\rangle| \leq C(M)(\eta+\frac{\epsilon_0}{2})$$
whenever $\|\omega\|_{L^{\infty}}\leq 1$. Thus, taking $\epsilon_0(M)$ and $\eta(M,L_1,\delta)>0$ small enough that
$$C(M)(\eta+\epsilon_0/2)<1,$$
it follows from (\ref{homintpairing}) that $\langle R_i,\omega^j\rangle=0$ for each $i=1,\ldots,r$ and $j=1,\ldots,b_{m+1}(M)$. Summing over $i=1,\ldots,r$, we therefore have
$$\langle \Sigma_{i=1}^r S_i',\omega^j\rangle=\langle \Sigma_{i=1}^rS_i,\omega^j\rangle$$
for each $j=1,\ldots, b_{m+1}(M)$, and (\ref{realalmiso}) follows.
\end{proof}

\subsection{A Decomposition Lemma for $S_{\alpha}(u,v)$.}\label{homints}\hspace{30mm}

\hspace{3mm} In this section, we prove that for weakly close maps $u,v\in \mathcal{E}^p(M,N)$, the current $S_{\alpha}(u,v)\in \mathcal{D}_{n+1-k}(M)$ of Lemma \ref{sdecomplem} admits a decomposition of the form
$$S_{\alpha}(u,v):=\Gamma+\partial R,$$
where $R\in \mathcal{D}_{n+2-k}(M)$, and $\Gamma\in \mathcal{I}_{n+1-k}(M)$ is integer rectifiable. 

\begin{lem}\label{hilem} For $p\in (k-1,k)$ and $L_2<\infty$, there exists $\epsilon(M,N,L_2,p)>0$ such that if $u,v\in \mathcal{E}^p(M,N)$ satisfy
\begin{equation}\label{hilemenerghyp}
E_p(u)+E_p(v)\leq L_2
\end{equation}
and
\begin{equation}\label{hilimlphyp}
\|u-v\|_{L^p}<\epsilon,
\end{equation}
then there exist $\Gamma\in \mathcal{I}_{n+1-k}(M)$ and $R\in \mathcal{D}_{n+2-k}(M)$ for which
\begin{equation}\label{intsdecomp}
S_{\alpha}(u,v)=\Gamma+\partial R.
\end{equation}
The same result holds if either $u$ or $v\in C^{\infty}(M,N)$.
\end{lem}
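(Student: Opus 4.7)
The plan is to construct an explicit Lipschitz homotopy between $u$ and $v$ on a common cubeulation and exploit the associated $(n{+}1{-}k)$-current on $M\times[0,1]$ to exhibit $S_\alpha(u,v)$ as an integer-rectifiable piece plus a boundary. First, I would apply Lemma \ref{slicelemma} simultaneously to $u$, $v$, and $u-v$, together with a Lipschitz density argument on the $(k-1)$-skeleton, to pick a cubeulation $h:|K_\delta|\to M$ (with $\delta$ small, depending on $L_2,p,\epsilon$) and Lipschitz maps $\tilde f,\tilde g:|K_\delta^{k-1}|\to N$ so that the approximations $\tilde u:=\tilde f\circ\Phi_k\circ h^{-1}$ and $\tilde v:=\tilde g\circ\Phi_k\circ h^{-1}$ lie in $\mathcal{E}^p(M,N)$ and are $W^{1,p}$-close to $u,v$. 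The hypothesis $\|u-v\|_{L^p}<\epsilon$ transfers by slicing to smallness of $\|\tilde f-\tilde g\|_{W^{1,p}(|K_\delta^{k-1}|)}$, and since $p>k-1=\dim|K_\delta^{k-1}|$, Morrey embedding plus relative approximation yield a Lipschitz homotopy $F:|K_\delta^{k-1}|\times[0,1]\to N$ with $F(\cdot,0)=\tilde f$ and $F(\cdot,1)=\tilde g$.

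Next I would form the global interpolation $H(x,t):=F(\Phi_k(h^{-1}(x)),t):M\times[0,1]\to N$, continuous off the codimension-$k$ set $L^{n-k}\times[0,1]$, and compute its associated current $T_\alpha(H)$ on $M\times[0,1]$. An adaptation of Proposition \ref{epint} to this $(n+1)$-dimensional domain shows that the interior part of $T_\alpha(H)$ has the explicit integer-rectifiable form $\pm\sum_\sigma\theta(\sigma)\,[P(\sigma)\times[0,1]]$, with coefficients $\theta(\sigma)=\int_{\partial\sigma}\tilde f^*\alpha\in\mathbb{Z}$ that are independent of $t$: indeed $F(\cdot,t)|_{\partial\sigma}$ traces a continuous family of integer $(k-1)$-cycles in $N$, whose pairing with the integer-periods form $\alpha$ takes values in the discrete set $\mathbb{Z}$. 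Stokes' theorem on the manifold-with-boundary $M\times[0,1]$ then expresses $\partial S_\alpha(H)$ as this interior current plus the boundary contributions $\pm(\iota_1)_*S_\alpha(\tilde v)\mp(\iota_0)_*S_\alpha(\tilde u)$. Pushing forward under $\pi:M\times[0,1]\to M$ (with $\pi\circ\iota_t=\mathrm{id}_M$), the boundary terms collapse to $\pm[S_\alpha(\tilde v)-S_\alpha(\tilde u)]$, while the decisive observation $\pi_*[P(\sigma)\times[0,1]]=0$—valid because the image $P(\sigma)$ only has dimension $n-k<n+1-k$—kills the interior contribution. Combining with Lemma \ref{sdecomplem} yields
\begin{equation*}
S_\alpha(\tilde u,\tilde v)=\partial\bigl[\pm\pi_*S_\alpha(H)-R_\alpha(\tilde u,\tilde v)\bigr],
\end{equation*}
i.e.~the decomposition with $\Gamma=0$ for the approximating pair.

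The hard part will be propagating this back to $(u,v)$. My plan is to telescope through Lemma \ref{sdecomplem}, writing $S_\alpha(u,v)=S_\alpha(\tilde u,\tilde v)+S_\alpha(u,\tilde u)+S_\alpha(\tilde v,v)+\partial(\text{explicit terms})$, and then absorb the two small-mass correction currents by an integrality/smallness dichotomy: for every $\omega\in\mathcal{A}^{n+1-k}(M)$, the pairing $\langle S_\alpha(u,v),\omega\rangle=\int_M(v^*\alpha-u^*\alpha)\wedge\omega$ equals (via smooth approximation of the $\mathcal{E}^p$-maps and $L^1$-continuity of pullbacks) the integer cup product $\langle(v^*-u^*)[\alpha]\cup[\omega],[M]\rangle$, while Lemma \ref{sdecomplem} bounds it by $CL_2^{(k-1)/p}\epsilon^{1+p-k}\|\omega\|_{L^\infty}$; for $\epsilon$ small this integer is less than one, hence vanishes. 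Together with a Federer–Fleming isoperimetric filling of the integer rectifiable cycle $T_\alpha(v)-T_\alpha(u)$—whose flat norm $\leq\mathbb{M}(S_\alpha(u,v))$ lies below the discrete threshold for nullhomology in $H_{n-k}(M;\mathbb{Z})$ and admits a small-mass integer rectifiable filling—this produces the required $\Gamma\in\mathcal{I}_{n+1-k}(M)$ and $R\in\mathcal{D}_{n+2-k}(M)$. The case where one of $u,v$ is smooth is handled identically, with mollification replacing the $\mathcal{E}^p$-approximation step.
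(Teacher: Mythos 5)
Your high-level idea — build a homotopy from $u$ to $v$ on a common cubeulation, pass to a map on the product, and analyze the resulting current — is exactly the strategy of the paper's proof. But two of your steps are problematic, and one of them is a genuine gap.

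First, the re-approximation of $u,v$ by $\tilde u,\tilde v$ is unnecessary and is what forces you into trouble later. Since $u,v\in\mathcal{E}^p(M,N)$ already, their singular sets lie in finite unions of $(n-k)$-dimensional submanifolds; so one can choose (via the usual Fubini argument) a \emph{single} cubeulation $h:|K|\to M$ whose $(k-1)$-skeleton misses $Sing(u)\cup Sing(v)$ entirely, and then both restrictions $u|_{|K^{k-1}|}$, $v|_{|K^{k-1}|}$ are honestly Lipschitz. Choosing $\epsilon$ small and invoking compactness of $W^{1,p}\hookrightarrow C^0$ on the $(k-1)$-dimensional skeleton gives $\|u\circ h - v\circ h\|_{C^0(|K^{k-1}|)}<\delta(N)$, so the homotopy $\pi_N(tv+(1-t)u)$ makes sense \emph{directly between $u$ and $v$}, with no intermediate $\tilde u,\tilde v$. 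The paper then extends this skeleton by skeleton to a $W^{1,p}$ map $w:M\times[0,1]\to N$ whose singular set is $(n+1-k)$-rectifiable, so that $|dw|\in L^{k,\infty}$; doubling to $M\times S^1$ and invoking Remark~\ref{wklkrk} shows $T_\alpha(w)=\partial S_\alpha(w)$ is an integral cycle. Pushing forward by $\pi:M\times S^1\to M$ with a cutoff produces the desired $\Gamma\in\mathcal{I}_{n+1-k}$ and $R\in\mathcal{D}_{n+2-k}$ with $S_\alpha(v)-S_\alpha(u)=\pm(\Gamma-\partial R)$, which gives the lemma via Lemma~\ref{sdecomplem}.

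The genuine gap is your "hard part." After producing a decomposition for the pair $(\tilde u,\tilde v)$, you telescope through $S_\alpha(u,\tilde u)$ and $S_\alpha(\tilde v,v)$ and try to absorb them by an integrality/smallness argument. This is circular: showing that those small-mass correction currents lie in $\mathcal{I}_{n+1-k}+\partial\mathcal{D}_{n+2-k}$ is \emph{precisely} what Lemma~\ref{hilem} asserts, so you cannot appeal to it for the corrections without a separate argument. Worse, the mechanism you invoke doesn't deliver the needed conclusion even in principle: for $u\in\mathcal{E}^p(M,N)$, the form $u^*\alpha$ is \emph{not} weakly closed (its distributional exterior derivative is exactly $T_\alpha(u)\neq 0$), so $[u^*\alpha]$ is not a de Rham class and $\int_M(v^*\alpha-u^*\alpha)\wedge\omega$ has no reason to be an integer. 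And even if vanishing of pairings with $\mathcal{A}^{n+1-k}(M)$ held, that only constrains the real-homology class of the current; it does not produce a decomposition into an integer-rectifiable piece plus a boundary. The Federer--Fleming filling of $T_\alpha(v)-T_\alpha(u)$ likewise gives an $(n+1-k)$-current with prescribed boundary, but that does not by itself identify $S_\alpha(u,v)$ modulo $\partial\mathcal{D}_{n+2-k}$. In short: drop the re-approximation, build the homotopy directly between $u$ and $v$ on a cubeulation whose $(k-1)$-skeleton avoids both singular sets, get the $L^{k,\infty}$ control, and use Remark~\ref{wklkrk} on $M\times S^1$ to obtain integrality without ever needing the final absorption step.
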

\begin{remark}\label{equivintsdecomp} By Lemma \ref{sdecomplem}, (\ref{intsdecomp}) is clearly equivalent to the statement that
$$S_{\alpha}(v)-S_{\alpha}(u)=\Gamma+\partial R'$$
for some $R'\in \mathcal{D}_{n+2-k}(M)$.
\end{remark}

\begin{proof} By the Fubini-type arguments of \cite{Wh1} and \cite{HangLin}, for maps $u,v \in \mathcal{E}^p(M,N)$ satisfying (\ref{hilemenerghyp}) and (\ref{hilimlphyp}), we can find a cubeulation $h:|K|\to M$ such that
\begin{equation}\label{skelenerbds}
E_p(u\circ h,|K^{k-1}|)+E_p(v\circ h,|K^{k-1}|)\leq C(M)L_2,
\end{equation}
\begin{equation}\label{skellpclose}
\|u\circ h-v\circ h\|_{L^p(|K^{k-1}|)}\leq C(M)\epsilon,
\end{equation}
where $K$ is a fixed cubical complex and the Lipschitz constants $Lip(h)$ and $Lip(h^{-1})$ are bounded independent of $u$ and $v$. 

\hspace{3mm} Moreover, since the singular sets $Sing(u)$ and $Sing(v)$ are contained in a finite union of $(n-k)$-dimensional submanifolds of $M$, we can choose this $h:|K|\to M$ such that the $(k-1)$-skeleton $h(|K^{k-1}|)$ lies a positive distance from $Sing(u)\cup Sing(v)$. Since $u$ and $v$ are Lipschitz away from $Sing(u)\cup Sing(v)$ by definition of $\mathcal{E}^p(M,N)$, it follows in particular that the restrictions $(u\circ h)|_{|K^{k-1}|}$ and $(v\circ h)|_{|K^{k-1}|}$ to the $(k-1)$-skeleton are again Lipschitz.

\hspace{3mm} Next (as in, e.g., \cite{Wh2}, Theorem 1.1), we note that the compactness of the embedding $W^{1,p}(|K^{k-1}|)\hookrightarrow C^0(|K^{k-1}|)$ implies the existence of a constant $\epsilon(M,N,L_2,p)>0$ such that (\ref{skelenerbds}) and (\ref{skellpclose}) imply
$$\|u\circ h-v\circ h\|_{C^0(|K^{k-1}|)}\leq \delta(N).$$
Here, $\delta(N)$ is chosen such that the $\delta(N)$-neighborhood $U_{\delta}$ of $N$ in $\mathbb{R}^L$ retracts $\pi_N:U_{\delta}\to N$ onto $N$. 

\hspace{3mm} Choosing such an $\epsilon$, we proceed to define a map $w:M\times [0,1]\to N$ satisfying $w(x,0)=u(x)$ and $w(x,1)=v(x)$ (compare \cite{Hang}, Lemma 2.2); throughout, we use the bi-Lipschitz map $h$ to identify $M$ and $|K|$, without comment. First, we set
$$w(x,0):=u(x)\text{ and }w(x,1)=v(x)\text{ for }x\in |K|,$$
and on $|K^{k-1}|\times [0,1]$, we define
$$w(x,t):=\pi_N(tv(x)+(1-t)u(x)).$$
For each $k$-cell $\sigma$ in $K$, the restriction
$$w|_{\partial(\sigma\times [0,1])}\in W^{1,p}(\partial (\sigma\times [0,1]),N)$$
of $w$ to $\partial (\sigma \times [0,1])$ is then a well-defined Sobolev map, satisfying an estimate of the form
$$\sup_{x\in \partial (\sigma\times[0,1])}\dist(x,\Sigma_0)\cdot |dw(x)|<\infty,$$
where $\Sigma_0=Sing(u)\times \{0\}\cup Sing(v)\times\{1\}$. Identifying $\sigma \times [0,1]$ with the $(k+1)$-ball $B_1^{k+1}$ in a bi-Lipschitz way, we can then extend $w$ to $\sigma \times [0,1]$ radially, setting $w(x)=\frac{x}{|x|}$. 

\hspace{3mm} We have now extended $w$ to a $W^{1,p}$ map on the whole $(k+1)$-skeleton $|\tilde{K}^{k+1}|$ of $\tilde{K}=K\otimes [0,1]$, satisfying 
$$\sup_{x\in |\tilde{K}^{k+1}|}\dist(x,\Sigma_1)\cdot |dw(x)|<\infty,$$
where $\Sigma_1$ is contained in a finite collection of Lipschitz curves in $|\tilde{K}^{k+1}|$. On each $(k+1)$-cell $\sigma$ of $K$, we can then extend $w$ from $\partial (\sigma \times [0,1])$ to $\sigma \times [0,1]$ as above, and carry on in this way, until finally we have the desired map
$$w\in W^{1,p}(M\times [0,1],N)$$
satisfying
$$w(x,0)=u(x)\text{ and }w(x,1)=v(x)\text{ in the trace sense},$$
and
\begin{equation}\label{wliplocest}
\sup_{x\in M}\dist(x,\Sigma)|dw(x)|<\infty,
\end{equation}
where $\Sigma\subset M\times [0,1]$ is a $(n+1-k)$-rectifiable set with $\mathcal{H}^{n+1-k}(\Sigma)<\infty$.

\hspace{3mm} Doubling this construction, we can evidently extend $w$ to a map 
$$w: M\times S^1\cong M\times [-3,3]/6\mathbb{Z}\to N$$
satisfying
\begin{equation}
w(x,t)=u(x)\text{ for }t\in [-2,-1],\text{ }w(x,t)=v(x)\text{ for }t\in [1,2],
\end{equation}
and, by virtue of (\ref{wliplocest}),
\begin{equation}
\|dw\|_{L^{k,\infty}(M)}<\infty.
\end{equation}
In particular, it follows from Remark \ref{wklkrk} that the homological singularity $T_{\alpha}(w)$ is an integral cycle in $M\times S^1$:
$$T_{\alpha}(w)=\partial S_{\alpha}(w)\in\mathcal{Z}_{n+1-k}(M\times S^1;\mathbb{Z}).$$

\hspace{3mm} Now, let $\pi:M\times S^1\to M$ be the obvious projection, and define currents $R\in \mathcal{D}_{n+2-k}(M)$ and $\Gamma\in \mathcal{I}_{n+1-k}(M;\mathbb{Z})$ to be the pushforwards
$$R:=\pi_{\#}(S_{\alpha}(w)\lfloor (M\times [-1.5,1.5]))$$
and
$$\Gamma:=\pi_{\#}(T_{\alpha}(w)\lfloor (M\times [-1.5,1.5])).$$
Fix a sequence $\psi_j\in C_c^{\infty}((-1.5,1.5))$ satisfying $0\leq \psi_j\leq 1$ and 
$$\psi_j\equiv 1\text{ on }[-1.5+\frac{1}{j},1.5-\frac{1}{j}];$$
since
$$S_{\alpha}(w)=S_{\alpha}(u)\times (-2,1)\text{ on }M\times (-2,-1)$$
and 
$$S_{\alpha}(w)=S_{\alpha}(v)\times (1,2)\text{ on }M\times (1,2),$$
it's clear that
\begin{equation}
R=\lim_{j\to\infty}R_j=\lim_{j\to\infty}\pi_{\#}(\psi_j(t)S_{\alpha}(w))
\end{equation}
and
\begin{equation}
\Gamma=\lim_{j\to\infty}\Gamma_j=\lim_{j\to\infty}\pi_{\#}(\psi_j(t)S_{\alpha}(w)).
\end{equation}

\hspace{3mm} For any $\zeta\in \Omega^{n+1-k}(M)$, we now compute
\begin{eqnarray*}
\langle \partial R_j,\zeta\rangle&=&\langle R_j,d\zeta\rangle\\
&=&\int_{M\times [-1.5,1.5]}\psi_j(t)w^*(\alpha)\wedge d(\pi^*\zeta)\\
&=&\int_{M\times [-1.5,1.5]}w^*(\alpha)\wedge d(\psi_j(t)\pi^*\zeta)\\
&&-\int_{M\times [-1.5,1.5]}w^*(\alpha)\wedge \psi_j'(t)dt\wedge \pi^*\zeta,
\end{eqnarray*}
which, by definition of $\Gamma_j$, gives
\begin{eqnarray*}
\langle \partial R_j-\Gamma_j,\zeta\rangle&=&(-1)^{n+2-k}\int_{M\times [-1.5,1.5]}w^*(\alpha)\wedge \pi^*\zeta\wedge \psi_j'(t)dt\\
&=&(-1)^{n+2-k}\int_{-1.5}^{-1}\psi_j'(t)dt\cdot \left(\int_Mu^*(\alpha)\wedge \zeta\right)\\
&&+(-1)^{n+2-k}\int_1^{1.5}\psi_j'(t)dt\left(\int_M v^*(\alpha)\wedge \zeta\right)dt\\
&=&(-1)^{n+2-k}\langle S_{\alpha}(u)-S_{\alpha}(v),\zeta\rangle
\end{eqnarray*}
for $j$ sufficiently large. 

\hspace{3mm} Passing to the limit $j\to\infty$, we conclude that
$$\partial R-\Gamma=(-1)^{n+2-k}(S_{\alpha}(u)-S_{\alpha}(v)),$$
and in particular,
$$S_{\alpha}(v)-S_{\alpha}(u)\in \mathcal{I}_{n+1-k}(M;\mathbb{Z})+\partial\mathcal{D}_{n+2-k}(M).$$
Recalling from Lemma \ref{sdecomplem} that the current $S_{\alpha}(u,v)$ differs from $S_{\alpha}(v)-S_{\alpha}(u)$ by the boundary of an $(n+2-k)$-current, it follows that
$$S_{\alpha}(u,v)\in \mathcal{I}_{n+1-k}(M;\mathbb{Z})+\partial \mathcal{D}_{n+2-k}(M)$$
as well, as desired.
\end{proof}

\subsection{Proof of Theorem \ref{lbdsthm}}\label{lbdspf}\hspace{30mm}

\hspace{3mm} Now, let $u,v\in C^{\infty}(M,N)$ be $(k-2)$-homotopic, and suppose there exists $\alpha\in \mathcal{A}^{k-1}(N)$ such that 
$$[u^*(\alpha)]-[v^*(\alpha)]\neq 0\in H_{dR}^{k-1}(M),$$
or, equivalently,
$$[S_{\alpha}(v)-S_{\alpha}(u)]\neq 0\in H_{n+1-k}(M;\mathbb{R}).$$
Evidently, such an $\alpha$ exists if and only if $u$ and $v$ induce different maps on the de Rham cohomology $H_{dR}^{k-1}(N)\to H_{dR}^{k-1}(M)$. 

\hspace{3mm} For every $\delta>0$, we define $\mathcal{C}^p_{\delta}(u,v)$ to be the collection of all finite sequences 
$$u_0,u_1,\ldots,u_r\in W^{1,p}(M,N)$$
such that $u_0=u$, $u_r=v$, and 
$$\|u_i-u_{i-1}\|_{L^p(M)}<\delta$$
for every $i=1,\ldots,r$. We then define
$$\gamma_p^{\delta}(u,v):=\inf\{\max_{0\leq j\leq r}E_p(u_j)\mid \{u_j\}_{j=1}^r\in \mathcal{C}^p_{\delta}(u,v)\},$$
and set
\begin{equation}\label{gammastardef}
\gamma_p^*(u,v):=\lim_{\delta\to 0}\gamma_p^{\delta}(u,v)=\sup_{\delta>0}\gamma_p^{\delta}(u,v).
\end{equation}

\begin{remark}\label{upperbdrk}
As observed in the introduction, it's clear from the definitions that $\gamma_p^*(u,v)\leq \gamma_p(u,v)$, since for any path $u_t\in W^{1,p}(M,N)$ from $u$ to $v$ and any $\delta>0$, we can find a finite sequence $0=t_0,t_1,\ldots,t_r=1$ such that $\{u_{t_i}\}\in \mathcal{C}^p_{\delta}(u,v)$. In particular, it follows from Theorem \ref{ubdsthm} that
$$\sup_{p<k}(k-p)\gamma_p^*(u,v)<\infty.$$
\end{remark}

We recall now the statement of Theorem \ref{lbdsthm}.

\begin{thm}\label{lbdsbody} For closed, oriented Riemannian manifolds $M^n,N$, maps $u,v\in C^{\infty}(M,N)$, and a $(k-1)$-form $\alpha\in \mathcal{A}^{k-1}(N)$ as above, such that
$$\overline{\xi}:=[S_{\alpha}(v)-S_{\alpha}(u)]\neq 0\in H_{n+1-k}(M;\mathbb{R}),$$
define
$$\Lambda(u,v):=\liminf_{p\to k}(k-p)\gamma_p^*(u,v).$$ 
Then we have the lower bound
\begin{equation}\label{mainthmbodbd}
\sigma_{k-1}{\bf L}_{n-k,\mathbb{R}}(\overline{\xi})\leq \lambda(\alpha)^{\frac{k}{k-1}}\Lambda(u,v).
\end{equation}
\end{thm}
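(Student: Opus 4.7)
The plan is to realize the width ${\bf L}_{n-k,\mathbb{R}}(\overline{\xi})$ using finite sequences of integral cycles extracted from an approximately optimal $L^p$-fine path from $u$ to $v$, via the compactness machinery of Section \ref{limthms} together with the decomposition and filling lemmas of Sections \ref{homints} and \ref{almsubsec}.

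Fix $\delta_0>0$ small enough that $\Psi:\mathscr{S}_{n-k,\delta_0}(M)\to H_{n+1-k}(M;\mathbb{Z})$ is well-defined, and let $\varepsilon>0$. Set $\Lambda:=\Lambda(u,v)+\varepsilon$, pick $L_1>\sigma_{k-1}^{-1}\lambda(\alpha)^{k/(k-1)}\Lambda$, and let $\eta=\eta(M,L_1,\delta_0)>0$ be the constant of Lemma \ref{fillinlem}. Choose $p_j\nearrow k$ with $(k-p_j)\gamma_{p_j}^*(u,v)\to\Lambda(u,v)$, so that $(k-p_j)\gamma_{p_j}^*(u,v)<\Lambda$ for $j$ large. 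For such $p=p_j$, I will take $\delta_1=\delta_1(p,\Lambda,\eta)>0$ small (to be specified below) and pick a sequence $\{u_i\}_{i=0}^{r}\in\mathcal{C}^p_{\delta_1}(u,v)$ with
$$\max_{0\le i\le r}E_p(u_i)\le\gamma_p^{\delta_1}(u,v)+\tfrac{\varepsilon}{k-p}\le\tfrac{\Lambda}{k-p}.$$

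To each $u_i$ with $0<i<r$, I apply Theorem \ref{bigcpctthm} with parameters $\Lambda$ and $\eta/3$ to obtain an approximation $\tilde{u}_i\in\mathcal{E}^p(M,N)$, an integral cycle $T_i\in\mathcal{Z}_{n-k}(M;\mathbb{Z})$ satisfying $\sigma_{k-1}\mathbb{M}(T_i)\le\lambda(\alpha)^{k/(k-1)}(k-p)E_p(u_i)$, and $\Gamma_i\in\mathcal{I}_{n+1-k}(M;\mathbb{Z})$ with $\mathbb{M}(\Gamma_i)<\eta/3$ and $T_\alpha(\tilde{u}_i)-T_i=\partial\Gamma_i$; for the endpoints I set $\tilde{u}_0:=u$, $\tilde{u}_r:=v$, $T_0=T_r=0$, and $\Gamma_0=\Gamma_r=0$. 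The approximation estimate in Theorem \ref{bigcpctthm} gives $\|\tilde{u}_{i-1}-\tilde{u}_i\|_{L^p}\le\delta_1+2C\Lambda^{1/p}(k-p)^{3-2/p}$, which I make arbitrarily small by shrinking $\delta_1$ after fixing $p$. Define
$$S_i\;:=\;S_\alpha(\tilde{u}_{i-1},\tilde{u}_i)+\Gamma_{i-1}-\Gamma_i\;\in\;\mathcal{D}_{n+1-k}(M).$$
A direct computation using Lemma \ref{sdecomplem} shows $\partial S_i=T_i-T_{i-1}$, and Lemma \ref{hilem} (applied to the pair $\tilde{u}_{i-1},\tilde{u}_i$, whose energies are bounded by $C\Lambda(k-p)^{-2}$) yields $S_\alpha(\tilde{u}_{i-1},\tilde{u}_i)\in\mathcal{I}_{n+1-k}(M;\mathbb{Z})+\partial\mathcal{D}_{n+2-k}(M)$, so $S_i$ lies in the same class. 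The mass bound of Lemma \ref{sdecomplem} combined with $\mathbb{M}(\Gamma_i)<\eta/3$ gives
$$\mathbb{M}(S_i)\le C(\alpha)[\Lambda/(k-p)^2]^{(k-1)/p}\|\tilde{u}_{i-1}-\tilde{u}_i\|_{L^p}^{1+p-k}+\tfrac{2\eta}{3},$$
so by choosing $\delta_1$ sufficiently small (depending on $p$, $\Lambda$, $\eta$) I can ensure $\mathbb{M}(S_i)<\eta$ for every $i$.

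Lemma \ref{fillinlem} then applies to $\{T_i\}_{i=0}^{r}$, placing it in $\mathscr{S}_{n-k,\delta_0}(M)$ and giving $\Psi(\{T_i\})\equiv[\sum_{i}S_i]$ in $H_{n+1-k}(M;\mathbb{R})$. Telescoping via Lemma \ref{sdecomplem} yields $\sum_iS_\alpha(\tilde{u}_{i-1},\tilde{u}_i)=S_\alpha(v)-S_\alpha(u)-\partial(\sum_iR_\alpha(\tilde{u}_{i-1},\tilde{u}_i))$, while $\sum_i(\Gamma_{i-1}-\Gamma_i)=\Gamma_0-\Gamma_r=0$, so $[\sum_iS_i]=[S_\alpha(v)-S_\alpha(u)]=\overline{\xi}$ in $H_{n+1-k}(M;\mathbb{R})$. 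By definition of the width,
$$\sigma_{k-1}{\bf L}_{n-k,\mathbb{R},\delta_0}(\overline{\xi})\le\sigma_{k-1}\max_i\mathbb{M}(T_i)\le\lambda(\alpha)^{k/(k-1)}(k-p)\max_iE_p(u_i)\le\lambda(\alpha)^{k/(k-1)}\Lambda,$$
and letting $p=p_j\to k$, then $\varepsilon\to 0$, then $\delta_0\to 0$ produces (\ref{mainthmbodbd}). The main obstacle is the coordination of approximation parameters: the factor $[\Lambda/(k-p)^2]^{(k-1)/p}$ in the mass bound for $S_\alpha(\tilde u_{i-1},\tilde u_i)$ blows up as $p\to k$, so one must verify that the $L^p$-fineness $\delta_1$ (together with the $(k-p)^{3-2/p}$ intrinsic error of Theorem \ref{bigcpctthm}) can be pushed small enough to absorb this growth uniformly in $i$; this succeeds precisely because $\eta$ depends only on $M$, $L_1$, $\delta_0$ and the mass bound for each $S_i$ is local in $i$, depending only on the adjacent pair $\tilde u_{i-1},\tilde u_i$.
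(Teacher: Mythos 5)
Your overall strategy is the same as the paper's: pick an almost-optimal $\{u_i\}\in\mathcal{C}^p_{\delta_1}(u,v)$, use Theorem~\ref{bigcpctthm} to produce $\tilde u_i\in\mathcal{E}^p$, $T_i$, $\Gamma_i$, set $S_i:=S_\alpha(\tilde u_{i-1},\tilde u_i)+\Gamma_{i-1}-\Gamma_i$, invoke Lemmas~\ref{sdecomplem}, \ref{hilem}, and \ref{fillinlem}, and telescope. The structure is right, and you have correctly identified that the delicate point is the mass bound $\mathbb{M}(S_i)<\eta$. But that bound is where the argument as written has a genuine gap.

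You claim $\|\tilde u_{i-1}-\tilde u_i\|_{L^p}\le\delta_1+2C\Lambda^{1/p}(k-p)^{3-2/p}$ ``which I make arbitrarily small by shrinking $\delta_1$ after fixing $p$.'' That is false: the second term is the intrinsic approximation error from Theorem~\ref{bigcpctthm} (coming from $\|u_i-\tilde u_i\|^p_{L^p}\le C\Lambda(k-p)^{3p-2}$) and is completely insensitive to $\delta_1$. For fixed $p$ you cannot push $\|\tilde u_{i-1}-\tilde u_i\|_{L^p}$ below roughly $(k-p)^{3-2/p}$. Consequently, ``by choosing $\delta_1$ sufficiently small (depending on $p,\Lambda,\eta$) I can ensure $\mathbb{M}(S_i)<\eta$'' does not follow; the available upper bound on $\mathbb{M}(S_\alpha(\tilde u_{i-1},\tilde u_i))$ has an irreducible floor at each $p$. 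Your closing paragraph acknowledges that the $[\Lambda/(k-p)^2]^{(k-1)/p}$ factor blows up and must be absorbed, but the reason you offer --- that $\eta$ depends only on $M,L_1,\delta_0$ and the bound is local in $i$ --- does not touch the actual problem, which is whether that factor beats the $(k-p)^{3-2/p}$ error raised to the small power $1+p-k$. The missing step is the exponent computation. Taking $\delta_1(p)=(k-p)^{3-2/p}$ (so that your fineness error is comparable to the intrinsic one), one has $\|\tilde u_i-\tilde u_{i-1}\|^p_{L^p}\le C'(k-p)^{3p-2}$, and hence
\[
\mathbb{M}\bigl(S_\alpha(\tilde u_{i-1},\tilde u_i)\bigr)\;\le\; C(k-p)^{-2(k-1)/p}\cdot(k-p)^{(3p-2)(1+p-k)/p}\;=\;C'(k-p)^{1-3(k-p)},
\]
and since $1-3(k-p)>0$ for $p$ close to $k$, this tends to $0$, giving $\mathbb{M}(S_i)<\eta$ for $p>q(\eta)$. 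Without some version of this cancellation you have not verified the hypothesis of Lemma~\ref{fillinlem}, and this is the step on which the entire theorem turns. Once that exponent calculation is added, your proof coincides with the paper's.

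A minor additional imprecision: you quote Theorem~\ref{bigcpctthm} as giving $\sigma_{k-1}\mathbb{M}(T_i)\le\lambda(\alpha)^{k/(k-1)}(k-p)E_p(u_i)$, but the theorem only gives the bound with the fixed parameter $\Lambda$ on the right; the per-$i$ refinement you wrote is stronger than what was proved. You do not need the refinement, so just use $\sigma_{k-1}\max_i\mathbb{M}(T_i)\le\lambda(\alpha)^{k/(k-1)}\Lambda$ directly.
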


\hspace{3mm} Most of the work in the proof of Theorem \ref{lbdsbody} is contained in the following lemma, which combines the results of Section \ref{limthms} and Lemma \ref{hilem}.

\begin{lem}\label{mainlem} For any $\eta\in (0,1)$, there exists $q(\eta)\in (k-1,k)$ with the property that for every $p\in (q,k)$, there exists $\delta_1(p)>0$ such that for any $\{u_i\}_{i=0}^r \in \mathcal{C}_{\delta_1}^p(u,v)$ satisfying
\begin{equation}\label{mapseqenerbd}
(k-p)\max_jE_p(u_j)\leq \Lambda(u,v)+\eta,
\end{equation}
we can find cycles $T_0,T_1,\ldots, T_r\in \mathcal{Z}_{n-k}(M;\mathbb{Z})$ with $T_0=T_r=0$ for which
$$\max_{0\leq i\leq r}\sigma_{k-1}\mathbb{M}(T_i)\leq\lambda(\alpha)^{\frac{k}{k-1}}(\Lambda(u,v)+\eta),$$
and currents
$$S_1,\ldots,S_r\in \mathcal{I}_{n+1-k}(M;\mathbb{Z})+\partial \mathcal{D}_{n+2-k}(M)$$
such that 
$$\partial S_i=T_i-T_{i-1},$$
\begin{equation}\label{mainlemfillinest}
\mathbb{M}(S_i)<3\eta,
\end{equation}
and
$$[\Sigma_{i=1}^rS_i]=[S_{\alpha}(v)-S_{\alpha}(u)]\in H_{n+1-k}(M;\mathbb{R}).$$
\end{lem}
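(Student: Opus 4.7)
The plan is to apply Theorem~\ref{bigcpctthm} to each intermediate map in the sequence, producing integer-rectifiable approximations $\tilde{u}_j\in\mathcal{E}^p(M,N)$ with controlled homological singularities, and then build the connecting currents $S_i$ by a telescoping construction combining the currents $S_\alpha(\tilde{u}_{i-1},\tilde{u}_i)$ from Lemma~\ref{sdecomplem} with the small-mass integral fillings supplied by Theorem~\ref{bigcpctthm}.

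Concretely, set $q(\eta):=q(M,\alpha,\Lambda(u,v)+\eta,\eta)\in(k-1,k)$ as in Theorem~\ref{bigcpctthm}, using that $\Lambda(u,v)<\infty$ by Remark~\ref{upperbdrk}. For $p\in(q,k)$ and $\{u_j\}\in\mathcal{C}^p_{\delta_1}(u,v)$ satisfying the energy bound, apply Theorem~\ref{bigcpctthm} (with $\Lambda=\Lambda(u,v)+\eta$ and tolerance $\eta$) to each $u_j$ with $1\leq j\leq r-1$ to produce $\tilde{u}_j\in\mathcal{E}^p(M,N)$, a cycle $T_j\in\mathcal{Z}_{n-k}(M;\mathbb{Z})$ with $\sigma_{k-1}\mathbb{M}(T_j)\leq \lambda(\alpha)^{k/(k-1)}(\Lambda(u,v)+\eta)$, and $\Gamma_j\in\mathcal{I}_{n+1-k}(M;\mathbb{Z})$ satisfying $T_\alpha(\tilde{u}_j)-T_j=\partial\Gamma_j$, $\mathbb{M}(\Gamma_j)<\eta$, $E_p(\tilde{u}_j)\leq C(M)(\Lambda(u,v)+\eta)(k-p)^{-2}$, and $\|u_j-\tilde{u}_j\|_{L^p}=O((k-p)^{(3p-2)/p})$. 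Since $u,v\in C^\infty(M,N)$ satisfy $T_\alpha(u)=T_\alpha(v)=0$, set $\tilde{u}_0:=u$, $\tilde{u}_r:=v$, $T_0=T_r=0$, $\Gamma_0=\Gamma_r=0$ for the endpoints, and define
\begin{equation*}
S_i:=S_\alpha(\tilde{u}_{i-1},\tilde{u}_i)+\Gamma_{i-1}-\Gamma_i,\qquad i=1,\ldots,r.
\end{equation*}
A direct computation using $\partial S_\alpha(\tilde{u}_{i-1},\tilde{u}_i)=T_\alpha(\tilde{u}_i)-T_\alpha(\tilde{u}_{i-1})$ and $\partial\Gamma_j=T_\alpha(\tilde{u}_j)-T_j$ gives $\partial S_i=T_i-T_{i-1}$. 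Telescoping with $\Gamma_0=\Gamma_r=0$ and a second application of Lemma~\ref{sdecomplem} yields $\sum_i S_i=S_\alpha(v)-S_\alpha(u)-\partial\bigl(\sum_i R_\alpha(\tilde{u}_{i-1},\tilde{u}_i)\bigr)$, so $[\sum_i S_i]=[S_\alpha(v)-S_\alpha(u)]$ in $H_{n+1-k}(M;\mathbb{R})$.

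It remains to choose $\delta_1(p)>0$ small enough to enforce two things for every adjacent pair: first, $\mathbb{M}(S_\alpha(\tilde{u}_{i-1},\tilde{u}_i))<\eta$ (which, combined with $\mathbb{M}(\Gamma_j)<\eta$, gives $\mathbb{M}(S_i)<3\eta$); and second, the integer-rectifiable decomposition of $S_\alpha(\tilde{u}_{i-1},\tilde{u}_i)$ supplied by Lemma~\ref{hilem}. Both amount to a $p$-dependent $L^p$-closeness requirement on adjacent $\tilde{u}_j$. The triangle inequality $\|\tilde{u}_{i-1}-\tilde{u}_i\|_{L^p}\leq \|\tilde{u}_{i-1}-u_{i-1}\|_{L^p}+\delta_1+\|u_i-\tilde{u}_i\|_{L^p}$ bounds the outer terms by $O((k-p)^{(3p-2)/p})$ independently of $\delta_1$, so taking $\delta_1(p)$ small enough forces $\|\tilde{u}_{i-1}-\tilde{u}_i\|_{L^p}$ below any prescribed $p$-dependent threshold. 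Feeding this into the mass estimate of Lemma~\ref{sdecomplem} gives the first bound; the second follows from Lemma~\ref{hilem}---applied in its smooth-endpoint form when $i=1$ or $i=r$---together with Remark~\ref{equivintsdecomp}, and since $\Gamma_{i-1}-\Gamma_i\in\mathcal{I}_{n+1-k}(M;\mathbb{Z})$, the decomposition passes to $S_i$.

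The main obstacle is managing the competing $p$-dependent scales: $E_p(\tilde{u}_j)=O((k-p)^{-2})$ inflates the mass estimate in Lemma~\ref{sdecomplem} by a factor $(k-p)^{-2(k-1)/p}$, and the threshold $\epsilon(M,N,L_2(p),p)$ in Lemma~\ref{hilem} also deteriorates as $p\to k$. The saving point is that the lemma permits $\delta_1$ to depend freely on $p$, and the approximation error $\|u_j-\tilde{u}_j\|_{L^p}=O((k-p)^{(3p-2)/p})$ decays fast enough to dominate both these thresholds, so a valid choice of $\delta_1(p)$ always exists.
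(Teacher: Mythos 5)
Your construction of $S_i:=S_\alpha(\tilde{u}_{i-1},\tilde{u}_i)+\Gamma_{i-1}-\Gamma_i$, the endpoint convention, the boundary check, and the telescoping argument for the homology class all match the paper's proof, which follows exactly this architecture. The gap is in the last stage where you pin down the mass bound.

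Your claim that ``taking $\delta_1(p)$ small enough forces $\|\tilde{u}_{i-1}-\tilde{u}_i\|_{L^p}$ below any prescribed $p$-dependent threshold'' is false. The two outer terms in the triangle inequality, $\|\tilde{u}_{i-1}-u_{i-1}\|_{L^p}$ and $\|u_i-\tilde{u}_i\|_{L^p}$, are each of size $O\big((k-p)^{(3p-2)/p}\big)$ by Theorem~\ref{bigcpctthm}, and this floor does not shrink when $\delta_1$ does. So for fixed $p$ you cannot push $\|\tilde{u}_{i-1}-\tilde{u}_i\|_{L^p}$ below an arbitrary threshold; at best, taking $\delta_1(p)$ comparable to the approximation error gives $\|\tilde{u}_{i-1}-\tilde{u}_i\|_{L^p}=O\big((k-p)^{(3p-2)/p}\big)$, no smaller. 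The requirement $\mathbb{M}\big(S_\alpha(\tilde{u}_{i-1},\tilde{u}_i)\big)<\eta$ is therefore not a $\delta_1$-smallness condition but a $q(\eta)$-largeness condition, and you need to verify it survives the competing blow-up from $E_p(\tilde{u}_j)=O\big((k-p)^{-2}\big)$. You flag both scales in your last paragraph and assert that the approximation error ``decays fast enough,'' but you never compute the composite exponent. The paper does: with the choice $\delta_1(p)=(k-p)^{3-2/p}$, combining Lemma~\ref{sdecomplem} with the bounds from Theorem~\ref{bigcpctthm} gives
$$
\mathbb{M}\big(S_\alpha(\tilde{u}_{i-1},\tilde{u}_i)\big)\;\lesssim\;(k-p)^{-\frac{2(k-1)}{p}}\,\big[(k-p)^{\frac{3p-2}{p}}\big]^{1+p-k}\;=\;(k-p)^{1-3(k-p)},
$$
and since the exponent $1-3(k-p)\to 1>0$ as $p\to k$, the right-hand side tends to zero, which is what allows one to choose $q(\eta)$ so that this is below $\eta$. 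Without this exponent arithmetic --- which is not a formality, since the two $p$-dependent scales oppose each other --- your argument does not establish the mass bound. Fixing the proof therefore requires replacing the $\delta_1$-smallness framing with the explicit choice $\delta_1(p)=(k-p)^{3-2/p}$ (or similar) and then carrying out the cancellation of exponents to conclude that $q(\eta)$ can be chosen appropriately.
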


\begin{proof} First, we appeal to Theorem \ref{bigcpctthm} to guarantee the existence of $q_0(\eta)=q_0(\Lambda(u,v)+\eta,\eta)>0$ such that for any $p\in (q_0,k)$ and any sequence of maps $u_1,\ldots,u_{r-1}\in W^{1,p}(M,N)$ satisfying (\ref{mapseqenerbd}), there exists a corresponding sequence $\tilde{u}_1,\ldots,\tilde{u}_{r-1}\in \mathcal{E}^p(M,N)$ satisfying
\begin{equation}\label{cpctcons1}
E_p(\tilde{u}_i)\leq \frac{C}{(k-p)^2},
\end{equation}
\begin{equation}\label{cpctcons2}
\|u_i-\tilde{u}_i\|_{L^p}^p\leq C(k-p)^{3p-2},
\end{equation}
and a sequence of integral cycles $T_i\in \mathcal{Z}_{n-k}(M;\mathbb{Z})$ and integral $(n+1-k)$-currents $\Gamma_i\in \mathcal{I}_{n+1-k}(M;\mathbb{Z})$ such that
\begin{equation}\label{cpctcons3}
\sigma_{k-1}\mathbb{M}(T_i)\leq \lambda(\alpha)^{\frac{k}{k-1}}[\Lambda(u,v)+\eta]
\end{equation}
\begin{equation}\label{cpctcons4}
T_{\alpha}(\tilde{u}_i)-T_i=\partial \Gamma_i,
\end{equation}
and
\begin{equation}\label{cpctcons5}
\mathbb{M}(\Gamma_i)<\eta.
\end{equation}

\hspace{3mm} Now, consider a family $\{u_i\}_{i=0}^r\in \mathcal{C}^p_{\delta}(u,v)$ for $p\in (q_0,k)$ satisfying (\ref{mapseqenerbd}). For $1\leq i\leq r-1$, choose 
$$\tilde{u}_i\in \mathcal{E}^p(M,N),\text{ }T_i\in \mathcal{Z}_{n-k}(M;\mathbb{Z})\text{, and }\Gamma_i\in \mathcal{I}_{n+1-k}(M;\mathbb{Z})$$
satisfying (\ref{cpctcons1})-(\ref{cpctcons5}), and extend these sequences trivially by setting 
$$\tilde{u}_0=u,\text{ }\tilde{u}_r=v,\text{ }T_0=T_r=0,\text{ and }\Gamma_0=\Gamma_r=0.$$ 
Next, setting
$$S_i:=S_{\alpha}(\tilde{u}_{i-1},\tilde{u}_i)+\Gamma_{i-1}-\Gamma_i,$$
for $i=1,\ldots,r$, we see that
$$\partial S_i=T_i-T_{i-1},$$
and, by Lemma \ref{hilem}, these $S_i$ have the form
$$S_i\in \mathcal{I}_{n+1-k}(M;\mathbb{Z})+\partial\mathcal{D}_{n+2-k}(M).$$
Moreover, since 
$$\Sigma_{i=1}^r(\Gamma_{i-1}-\Gamma_i)=\Gamma_0-\Gamma_r=0,$$
and (by Lemma \ref{sdecomplem}) 
$$S_{\alpha}(\tilde{u}_i)-S_{\alpha}(\tilde{u}_{i-1})-S_{\alpha}(\tilde{u}_{i-1},\tilde{u}_i)\in \partial D_{n+2-k}(M),$$
it follows that 
\begin{equation}
[\Sigma_{i=1}^rS_i]=[S_{\alpha}(v)-S_{\alpha}(u)]\in H_{n+1-k}(M;\mathbb{R}).
\end{equation}

\hspace{3mm} To complete the proof of the lemma, it remains to establish the mass bound (\ref{mainlemfillinest}) for these $S_i$, for $p>q(\eta)$ sufficiently large and $\delta=\delta_1(p)$ sufficiently small. To estimate the mass of 
$$S_i=S_{\alpha}(\tilde{u}_{i-1},\tilde{u}_i)+\Gamma_{i-1}-\Gamma_i,$$
we first apply (\ref{cpctcons5}) to see that
$$\mathbb{M}(S_i)\leq \mathbb{M}(S_{\alpha}(\tilde{u}_{i-1},\tilde{u}_i))+2\eta.$$
Now, by Lemma \ref{sdecomplem} and the energy bound (\ref{cpctcons1}), we know that
\begin{eqnarray*}
\mathbb{M}(S_{\alpha}(\tilde{u}_{i-1},\tilde{u}_i))&\leq &C [E_p(\tilde{u}_{i-1})^{\frac{k-1}{p}}+E_p(\tilde{u}_i)^{\frac{k-1}{p}}]\|\tilde{u}_i-\tilde{u}_{i-1}\|_{L^p}^{1+p-k}\\
&\leq &C (k-p)^{-2(k-1)/p}\|\tilde{u}_i-\tilde{u}_{i-1}\|_{L^p}^{1+p-k}.
\end{eqnarray*}
Moreover, by (\ref{cpctcons2}), we have
$$\|u_i-\tilde{u}_i\|_{L^p}^p\leq C(k-p)^{3p-2},$$
while by definition of $\mathcal{C}^p_{\delta}(u,v)$, we have also
$$\|u_i-u_{i-1}\|_{L^p}^p<\delta^p.$$
Taking $\delta=\delta_1(p)=(k-p)^{3-2/p}$ and combining the estimates above, it follows in particular that
$$\|\tilde{u}_i-\tilde{u}_{i-1}\|_{L^p}^p\leq C'(k-p)^{3p-2},$$
and consequently,
\begin{eqnarray*}
\mathbb{M}(S_{\alpha}(\tilde{u}_{i-1},\tilde{u}_i))&\leq &C(k-p)^{-2(k-1)/p}\|\tilde{u}_i-\tilde{u}_{i-1}\|_{L^p}^{1+p-k}\\
&\leq & C'(k-p)^{-\frac{2(k-1)}{p}+(3p-2)\frac{1+p-k}{p}}\\
&=&C'(k-p)^{1-3(k-p)}.
\end{eqnarray*}
Since $\lim_{p\to k}(k-p)^{1-3(k-p)}=0$, we can therefore choose $q(\eta)\in (k-1,k)$ such that
$$\mathbb{M}(S_{\alpha}(\tilde{u}_{i-1},\tilde{u}_i))<\eta,$$
and, consequently,
$$\mathbb{M}(S_i)<3\eta$$
for $p\in (q,k)$. This completes the proof.
\end{proof}

\hspace{3mm} Combining the preceding lemma with the results of Lemma \ref{fillinlem}, we complete the proof of Theorem \ref{lbdsbody} as follows.

\begin{proof} Fix some $\delta\in (0,1)$. By Lemma \ref{fillinlem}, we can find $\eta(\delta)\in (0,\delta)$ such that for any sequence $\{T_i\}_{i=0}^r\subset\mathcal{Z}_{n-k}(M;\mathbb{Z})$ satisfying $T_0=T_r=0,$
\begin{equation}\label{cycleseq1}
\mathbb{M}(T_i)\leq C(\alpha)[\Lambda(u,v)+1]
\end{equation}
and
\begin{equation}
T_i-T_{i-1}=\partial S_i
\end{equation}
for some $S_1,\ldots,S_r\in \mathcal{I}_{n+1-k}(M;\mathbb{Z})+\partial D_{n-k+2}(M)$ with
\begin{equation}\label{cycleseq3}
\mathbb{M}(S_i)<3\eta,
\end{equation}
we have $\{T_i\}\in \mathscr{S}_{n-k,\delta}(M)$, with the associated homology class $\Psi(\{T_i\})$ satisfying
\begin{equation}\label{cyclesalmiso}
\Psi(\{T_i\})\equiv [\Sigma_{i=1}^rS_i]\text{ in }H_{n-k+1}(M;\mathbb{R}).
\end{equation}

\hspace{3mm} Now, with $\eta(\delta)\in (0,\delta)$ as above, let $q(\eta)$ be as in Lemma \ref{mainlem}, and choose $p\in (q,k)$ such that
$$(k-p)\gamma_p^*(u,v)\leq \Lambda(u,v)+\eta/2.$$
Then, choose $\delta_1(p)>0$ according to Lemma \ref{mainlem}, and select some family $\{u_i\}_{i=0}^r\in \mathcal{C}_{\delta_1}^p(u,v)$ such that
\begin{eqnarray*}
(k-p)\max_iE_p(u_i)&\leq & (k-p)\gamma_p^{\delta_1}(u,v)+\eta/2\\
&\leq & \Lambda(u,v)+\eta.
\end{eqnarray*}
By Lemma \ref{mainlem}, we can associate to these $\{u_i\}_{i=0}^r$ a family of cycles $T_0,\ldots,T_r\in \mathcal{Z}_{n-k}(M;\mathbb{Z})$ and currents $S_1,\ldots,S_r\in \mathcal{I}_{n+1-k}(M;\mathbb{Z})+\partial \mathcal{D}_{n-k+2}(M)$ satisfying (\ref{cycleseq1})-(\ref{cycleseq3}), as well as the sharper mass bound
\begin{eqnarray*}
\sigma_{k-1}\max_i\mathbb{M}(T_i)&<&\lambda(\alpha)^{\frac{k}{k-1}}\Lambda(u,v)+C(\alpha)\eta\\
&<&\lambda(\alpha)^{\frac{k}{k-1}}\Lambda(u,v)+C(\alpha)\delta,
\end{eqnarray*}
and the homological condition
$$[\Sigma_{i=1}^rS_i]\equiv [S_{\alpha}(v)-S_{\alpha}(u)]\text{ in }H_{n-k+1}(M;\mathbb{R}).$$

\hspace{3mm} In particular, it follows that there exists $\{T_i\}\in \mathscr{S}_{n-k,\delta}(M)$ satisfying 
$$\Psi(\{T_i\})\equiv [S_{\alpha}(v)-S_{\alpha}(u)]\in H_{n+1-k}(M;\mathbb{R})$$
and
$$\max_i\sigma_{k-1}\mathbb{M}(T_i)<\lambda(\alpha)^{\frac{k}{k-1}}\Lambda(u,v)+C(\alpha)\delta.$$
Recalling the notation of Section \ref{almsubsec}, this means precisely that
$$\sigma_{k-1}{\bf L}_{n-k,\mathbb{R},\delta}([S_{\alpha}(v)-S_{\alpha}(u)])<\lambda(\alpha)^{\frac{k}{k-1}}\Lambda(u,v)+C(\alpha)\delta.$$
Finally, taking $\delta\to 0$, we arrive at the desired inequality
$$\sigma_{k-1}{\bf L}_{n-k,\mathbb{R}}([S_{\alpha}(v)-S_{\alpha}(u)])\leq \lambda(\alpha)^{\frac{k}{k-1}}\Lambda(u,v).$$

\end{proof}

\section{Associated $p$-Harmonic Maps}\label{pharms}

\hspace{3mm} In this short section, we demonstrate the existence of mountain pass critical points for the $p$-energy functional associated with energy lying between $\gamma_p^*(u,v)$ and $\gamma_p(u,v)$.

\begin{thm}\label{pharmthm} For any $(k-2)$-homotopic maps $u,v\in C^{\infty}(M,N)$ and $p\in (1,k)\setminus \mathbb{N}$ such that 
\begin{equation}
\max\{E_p(u),E_p(v)\}<\gamma_p^*(u,v),
\end{equation}
there exists a stationary $p$-harmonic map $w\in W^{1,p}(M,N)$ such that
$$\gamma_p^*(u,v)\leq E_p(w)\leq \gamma_p(u,v).$$
\end{thm}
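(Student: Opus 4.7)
The plan is to apply standard mountain--pass theory to the generalized Ginzburg--Landau functionals
$$E_{p,\epsilon}(f):=\int_M|df|^p+\frac{1}{\epsilon^p}\int_M F(f)$$
of Wang \cite{Wang} on $W^{1,p}(M,\mathbb{R}^L)$, where $F\in C^\infty(\mathbb{R}^L;[0,\infty))$ is smooth with $F^{-1}(0)=N$ and $F(y)\asymp \dist(y,N)^2$ near $N$. Since $F$ vanishes on $N$, one has $E_{p,\epsilon}|_{W^{1,p}(M,N)}=E_p$, so the mountain--pass value
$$c_\epsilon:=\inf_{\gamma\in\Gamma}\max_{t\in[0,1]}E_{p,\epsilon}(\gamma(t)),\qquad \Gamma:=\{\gamma\in C^0([0,1],W^{1,p}(M,\mathbb{R}^L))\mid \gamma(0)=u,\,\gamma(1)=v\},$$
is bounded above by $\gamma_p(u,v)$ uniformly in $\epsilon$.

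Next I would establish the matching asymptotic lower bound $\liminf_{\epsilon\to 0}c_\epsilon\geq\gamma_p^*(u,v)$. Given any admissible $\gamma\in\Gamma$ with $\sup_t E_{p,\epsilon}(\gamma(t))<c$, sample at a sufficiently fine partition $0=t_0<\cdots<t_r=1$ so that $\|\gamma(t_i)-\gamma(t_{i-1})\|_{L^p}<\delta/2$, and set $f_i:=\gamma(t_i)$. The penalty bound $\int F(f_i)\leq c\epsilon^p$ forces each $f_i$ to lie within a thin tubular neighborhood of $N$ outside a set of small measure, and a truncation--and--retraction procedure (modifying $f_i$ on the bad set, then composing with the nearest-point projection $\pi_N$) produces maps $\tilde f_i\in W^{1,p}(M,N)$ with $\tilde f_0=u$, $\tilde f_r=v$, $E_p(\tilde f_i)\leq c+o_\epsilon(1)$, and $\|\tilde f_i-\tilde f_{i-1}\|_{L^p}<\delta$. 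This gives $\gamma_p^\delta(u,v)\leq c+o_\epsilon(1)$, hence the claim upon sending $c\downarrow c_\epsilon$ and $\delta\downarrow 0$. Combined with the hypothesis $\max\{E_p(u),E_p(v)\}<\gamma_p^*(u,v)$, this ensures $c_\epsilon>\max\{E_{p,\epsilon}(u),E_{p,\epsilon}(v)\}$ for all $\epsilon$ small enough, so the mountain--pass geometry is genuine.

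For each fixed $\epsilon>0$ the functional $E_{p,\epsilon}$ is $C^1$ on $W^{1,p}(M,\mathbb{R}^L)$, and the Palais--Smale condition at any fixed level holds via standard monotonicity and coercivity arguments for the $p$-Laplacian in the presence of a smooth penalty. The Ambrosetti--Rabinowitz mountain--pass theorem then produces critical points $w_\epsilon\in W^{1,p}(M,\mathbb{R}^L)$ with $E_{p,\epsilon}(w_\epsilon)=c_\epsilon$, solving the penalized Euler--Lagrange system $-\Delta_p w_\epsilon+\frac{1}{p\epsilon^p}\nabla F(w_\epsilon)=0$ in the stationary sense. Along a subsequence $\epsilon_j\to 0$, Wang's convergence theorem for such penalized critical points furnishes strong $W^{1,p}$-convergence $w_{\epsilon_j}\to w\in W^{1,p}(M,N)$ with $w$ a stationary $p$-harmonic map, and strong convergence yields $E_p(w)=\lim_j c_{\epsilon_j}\in[\gamma_p^*(u,v),\gamma_p(u,v)]$, as claimed.

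The principal obstacle is the lower bound on $c_\epsilon$: the projection step requires a careful quantitative truncation to control the energy contribution on the set where $\gamma(t_i)$ strays from the tubular neighborhood of $N$, since no a priori $L^\infty$ bound on $\gamma_\epsilon$ is available, and the proof must pin down how $E_p(\tilde f_i)$ and $\|\tilde f_i-\tilde f_{i-1}\|_{L^p}$ inherit the desired estimates from $E_{p,\epsilon}(f_i)$ and $\|f_i-f_{i-1}\|_{L^p}$. A secondary technical point is the extraction of \emph{strong} (rather than merely weak) $W^{1,p}$-convergence for the non-minimizing critical points $w_{\epsilon_j}$, which is where the full $\epsilon$-regularity and energy-quantization machinery of Wang is invoked and where the hypothesis $p\in(1,k)\setminus\mathbb{N}$ becomes essential.
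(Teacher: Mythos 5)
Your overall architecture matches the paper's: introduce the generalized Ginzburg--Landau functionals $E_{p,\epsilon}$ of Wang, observe $\gamma_{GL,p,\epsilon}(u,v)\leq\gamma_p(u,v)$ directly, produce mountain--pass critical points $w_\epsilon$ for $\epsilon$ small, pass to a limit via Wang's compactness to get a stationary $p$-harmonic map, and sandwich its energy by also proving $\gamma_p^*(u,v)\leq\sup_{\epsilon>0}\gamma_{GL,p,\epsilon}(u,v)$. The discretization of a near--optimal GL path at an $L^p$--fine set of times to feed into the definition of $\gamma_p^*$ is also exactly the paper's device. The difference -- and it is the crux -- lies in how you push the sampled maps $f_i=\gamma(t_i)$ onto $N$-valued maps with comparable $E_p$ and small $L^p$-drift.

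You propose a direct quantitative truncation on the bad set $\{\dist(f_i,N)\geq\delta_N\}$ followed by nearest-point projection $\pi_N$, and you correctly flag that this is the problematic step: the penalty bound $\int F(f_i)\leq c\epsilon^p$ controls the \emph{measure} of the bad set but gives no control on $|df_i|$ there, so the composition with $\pi_N$ can blow up the $p$-energy in a way you have not bounded. This is a genuine gap in your argument as written. The paper avoids it entirely with a soft compactness argument (its Lemma on approximation by $N$-valued maps): suppose for contradiction that no uniform $\epsilon_0(p,\eta)$ works; then there are $\epsilon_j\to 0$ and $w_j$ with $E_{p,\epsilon_j}(w_j)$ bounded but such that every $N$-valued map is either $L^p$-far or $E_p$-much-larger. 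Passing to a weakly convergent subsequence $w_j\rightharpoonup w$ in $W^{1,p}$ with $w_j\to w$ in $L^p$, the vanishing penalty forces $w\in W^{1,p}(M,N)$, and weak lower semicontinuity gives $E_p(w)\leq\liminf E_{p,\epsilon_j}(w_j)$, contradicting the assumption. No quantitative truncation estimates are needed. Unless you can supply the missing gradient control on the bad set (which would require either a capacity-type argument or a cut-and-fill construction with interpolation in the transition region), you should adopt the compactness route.

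Two minor points: (i) you should specify that $F$ grows (say linearly) at infinity rather than merely $F\asymp\dist(\cdot,N)^2$ near $N$ with $F^{-1}(0)=N$; without this, $E_{p,\epsilon}$ need not be coercive on $W^{1,p}(M,\mathbb{R}^L)$ and the Palais--Smale condition you invoke is not available (the paper imposes $F(x)=|x|$ for $|x|$ large precisely for this reason). (ii) Your final step should note that the paper takes $\lim_{\epsilon\to0}\gamma_{GL,p,\epsilon}=\sup_{\epsilon>0}\gamma_{GL,p,\epsilon}$ -- monotone in $\epsilon$ -- so that the hypothesis $\max\{E_p(u),E_p(v)\}<\gamma_p^*(u,v)$ combined with the lower bound gives strict mountain--pass geometry for all sufficiently small $\epsilon$, which you state but should tie explicitly to the supremum rather than the $\liminf$.
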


\hspace{3mm} We produce these $p$-harmonic maps by applying standard mountain pass methods to the generalized Ginzburg-Landau functionals studied by Wang in \cite{Wang}. Fixing once again an isometric embedding 
$$N\subset \mathbb{R}^L$$
of our target manifold $N$ into some higher-dimensional Euclidean space, we consider a function $F: C^{\infty}(\mathbb{R}^L)$ satisfying
$$F(x)=\dist(x,N)^2\text{ when }\dist(x,N)<\delta_N$$
on the $\delta_N$-tubular neighborhood of $N$,
$$F(x)\geq \delta_N^2\text{ when }\dist(x,N)\geq \delta_N,$$
and (for technical reasons)
$$F(x)=|x|\text{ for }|x|>R_0,\text{ some large radius}.$$
For $p\in (1,\infty)$ and $\epsilon>0$, the generalized Ginzburg-Landau functionals
$$E_{p,\epsilon}: W^{1,p}(M,\mathbb{R}^L)\to \mathbb{R}$$
can then be defined by
\begin{equation}
E_{p,\epsilon}(w):=\int_M(|dw|^p+\epsilon^{-p}F(w)).
\end{equation}

\hspace{3mm} For the $(k-2)$-homotopic maps $u,v\in C^{\infty}(M,N)$ and $p\in (1,k)$, we then define the mountain pass energies $\gamma_{GL,p,\epsilon}(u,v)$ to be the infimum
\begin{equation}
\gamma_{GL,p,\epsilon}(u,v):=\inf\{\max_{t\in [0,1]} E_{p,\epsilon}(u_t)\mid u_0=u,\text{ }u_1=v\}
\end{equation}
of the maximum energy $\max_{t\in [0,1]}E_{p,\epsilon}(u_t)$ over all continuous paths $t\mapsto u_t$ in $C^0([0,1],W^{1,p}(M,\mathbb{R}^L))$ from $u_0=u$ to $u_1=v$. It follows immediately that
\begin{equation}\label{gleupper}
\gamma_{GL,p,\epsilon}(u,v)\leq \gamma_p(u,v)
\end{equation}
for every $\epsilon>0$, since any continuous family $u_t\in W^{1,p}(M,N)$ connecting $u$ to $v$ through $N$-valued maps satisfies
$$\gamma_{GL,p,\epsilon}(u,v)\leq \max_{t\in [0,1]}E_{p,\epsilon}(u_t)=\max_{t\in [0,1]}E_p(u_t).$$

\hspace{3mm} Now, since the generalized Ginzburg-Landau energies $E_{p,\epsilon}$ are $C^1$ functionals on the Banach space $W^{1,p}(M,\mathbb{R}^L)$, and satisfy a Palais-Smale condition (see, e.g., \cite{St18}, Section 7.1), we can appeal to standard existence results for critical points of mountain pass type (see \cite{Gh}, Chapter 6) to arrive at the following lemma.

\begin{lem}\label{glmpsols}
For any $p\in (1,k)$ and $\epsilon>0$, if 
\begin{equation}
\gamma_{GL,p,\epsilon}(u,v)>\max\{E_{p,\epsilon}(u),E_{p,\epsilon}(v)\}=\max\{E_p(u),E_p(v)\},
\end{equation}
then there exists a critical point $w_{\epsilon}$ of $E_{p,\epsilon}$ of energy
$$E_{p,\epsilon}(w_{\epsilon})=\gamma_{GL,p,\epsilon}(u,v).$$
\end{lem}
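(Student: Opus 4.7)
The strategy is to obtain $w$ as a subsequential limit of the mountain--pass critical points $w_\epsilon$ of $E_{p,\epsilon}$ furnished by Lemma \ref{glmpsols}, after establishing the sandwich
\[
\gamma_p^*(u,v)-o_\epsilon(1)\ \leq\ \gamma_{GL,p,\epsilon}(u,v)\ \leq\ \gamma_p(u,v)\qquad\text{as }\epsilon\to 0.
\]
The upper inequality is (\ref{gleupper}). Granted the lower one, the standing hypothesis $\gamma_p^*(u,v)>\max\{E_p(u),E_p(v)\}$ makes the gap condition of Lemma \ref{glmpsols} hold for every sufficiently small $\epsilon$, producing critical points $w_\epsilon$ with $E_{p,\epsilon}(w_\epsilon)=\gamma_{GL,p,\epsilon}(u,v)$ trapped in the strip. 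The compactness theory developed by Wang in \cite{Wang} for critical points of $E_{p,\epsilon}$ then supplies a subsequence $w_{\epsilon_j}$ converging strongly in $W^{1,p}$ to a stationary $p$-harmonic map $w\in W^{1,p}(M,N)$, with $\epsilon_j^{-p}\int_M F(w_{\epsilon_j})\to 0$ and $E_p(w_{\epsilon_j})\to E_p(w)$; substituting into the sandwich locks $E_p(w)$ inside $[\gamma_p^*(u,v),\gamma_p(u,v)]$.

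The main step is therefore the lower bound $\liminf_{\epsilon\to 0}\gamma_{GL,p,\epsilon}(u,v)\geq \gamma_p^*(u,v)$. Fix $\delta>0$, and for each $\epsilon$ choose a continuous path $t\mapsto u_t^\epsilon\in W^{1,p}(M,\mathbb{R}^L)$ connecting $u$ to $v$ with $\max_t E_{p,\epsilon}(u_t^\epsilon)\leq \gamma_{GL,p,\epsilon}(u,v)+\epsilon$. By absolute continuity of the path in $L^p$, sample along a partition $0=t_0<\cdots<t_r=1$ with $\|u_{t_i}^\epsilon-u_{t_{i-1}}^\epsilon\|_{L^p}<\delta/2$. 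Each interior map $u_i:=u_{t_i}^\epsilon$ satisfies $\int_M F(u_i)\leq C\epsilon^p$, so the bad set $B_i:=\{x:\dist(u_i(x),N)\geq \delta_N/2\}$ has measure $|B_i|=O(\epsilon^p)$, and off $B_i$ the nearest--point retraction $\pi_N\circ u_i$ is a $W^{1,p}$ map into $N$ with $|d(\pi_N\circ u_i)|\leq (1+O(\delta_N))|du_i|$. The target is to patch this retraction across $B_i$ to $\tilde u_i\in W^{1,p}(M,N)$ with $\|\tilde u_i-u_i\|_{L^p}=o_\epsilon(1)$ and $E_p(\tilde u_i)\leq E_p(u_i)+o_\epsilon(1)$; given such $\tilde u_i$ (with $\tilde u_0:=u$, $\tilde u_r:=v$), the sequence $\tilde u_0,\ldots,\tilde u_r$ lies in $\mathcal{C}_\delta^p(u,v)$ for $\epsilon$ small and witnesses $\gamma_p^\delta(u,v)\leq \gamma_{GL,p,\epsilon}(u,v)+o_\epsilon(1)$. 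Letting $\epsilon\to 0$ and then $\delta\to 0$ closes the lower bound via (\ref{gammastardef}).

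The principal obstacle is the patching across $B_i$. My plan is to apply the cubeulation and slicing machinery of Section \ref{slices} at a mesh scale $\rho=\rho(\epsilon)\to 0$ tuned so that $\rho^{-n}|B_i|\to 0$; by Fubini/slicing, a generic translate of the grid gives a cubeulation on which $u_i$ is well controlled on the $(k-1)$-skeleton and only a vanishing fraction of $n$-cells meet $B_i$. On $n$-cells disjoint from $B_i$ set $\tilde u_i:=\pi_N\circ u_i$; on each $n$-cell $\Delta$ meeting $B_i$, since $p>k-1$ and $(\pi_N\circ u_i)|_{|K^{k-1}|\cap\partial\Delta}$ is a Sobolev $N$-valued map approximable in $W^{1,p}$ by Lipschitz maps, replace $u_i|_\Delta$ by the radial extension $f\circ\Phi_k$ of a Lipschitz $N$-valued trace on $|K^{k-1}|\cap\partial\Delta$ homotopic to $\pi_N\circ u_i$ there --- exactly the construction used to define $\mathcal{E}^p(M,N)$ in Section \ref{homsingbasics}. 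The estimates of Lemmas \ref{philem} and \ref{wkapplem} then bound the energy of the replacement on each bad cell in terms of its $(k-1)$-skeletal energy, and summing over bad cells --- whose total volume is $O(\epsilon^p)$ and whose aggregate skeletal energy is $o_\epsilon(1)$ by the Fubini slicing --- yields $E_p(\tilde u_i)\leq E_p(u_i)+o_\epsilon(1)$ for an appropriately tuned $\rho=\rho(\epsilon)$. Combined with the limit passage recorded above, this completes the proof.
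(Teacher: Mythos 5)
Your proposal does not prove the statement at hand. Lemma \ref{glmpsols} is a fixed-$\epsilon$, fixed-$p$ existence statement: given the gap condition $\gamma_{GL,p,\epsilon}(u,v)>\max\{E_{p,\epsilon}(u),E_{p,\epsilon}(v)\}$, produce a critical point of the functional $E_{p,\epsilon}$ at exactly the level $\gamma_{GL,p,\epsilon}(u,v)$. Your first sentence takes these critical points as ``furnished by Lemma \ref{glmpsols}'' and then devotes the entire argument to the limit passage $\epsilon\to 0$: the sandwich $\gamma_p^*(u,v)\leq\liminf_\epsilon\gamma_{GL,p,\epsilon}(u,v)\leq\gamma_p(u,v)$, Wang's compactness for the critical points, and the patching of almost-$N$-valued maps back into $W^{1,p}(M,N)$. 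That is (a reasonable sketch of) the proof of Theorem \ref{pharmthm}, i.e.\ of the combination of Proposition \ref{pharmprop}, Lemma \ref{dcomplem}, and the lower bound (\ref{gllowbds}) --- but it is circular as a proof of Lemma \ref{glmpsols} itself, which it invokes as an input.

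What the lemma actually requires is a standard mountain pass argument for the unconstrained functional $E_{p,\epsilon}$ on the Banach space $W^{1,p}(M,\mathbb{R}^L)$ (not on $W^{1,p}(M,N)$, which is why the Ginzburg--Landau relaxation is introduced in the first place): one checks that $E_{p,\epsilon}$ is $C^1$, that it satisfies the Palais--Smale condition --- this is where the technical normalization $F(x)=|x|$ for $|x|>R_0$ enters, giving the coercivity needed to extract strongly convergent subsequences from Palais--Smale sequences (the paper points to \cite{St18}, Section 7.1) --- and that the hypothesis $\gamma_{GL,p,\epsilon}(u,v)>\max\{E_{p,\epsilon}(u),E_{p,\epsilon}(v)\}$ supplies the mountain pass geometry. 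The conclusion is then the classical mountain pass theorem (\cite{Gh}, Chapter 6). None of these three ingredients appears in your write-up, so as an answer to this particular statement there is a genuine gap; the material you did write belongs to (and largely matches) the paper's subsequent deduction of Theorem \ref{pharmthm}.
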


\hspace{3mm} In light of the upper bound (\ref{gleupper}) for the energies $\gamma_{GL,p,\epsilon}(u,v)$, the critical points $w_{\epsilon}$ given by Lemma \ref{glmpsols} have uniformly bounded energies $E_{p,\epsilon}(w_{\epsilon})$ as $\epsilon\to 0$. For non-integer $p\in (1,k)\setminus \mathbb{N}$, it therefore follows from the compactness results of \cite{Wang} (namely, \cite{Wang}, Corollary B) that some subsequence $w_{\epsilon_j}$ converges strongly to a stationary $p$-harmonic map $w\in W^{1,p}(M,N)$. In particular, we have the following existence result.

\begin{prop}\label{pharmprop}
For every $p\in (1,k)\setminus \mathbb{N}$, if
$$\max\{E_p(u),E_p(v)\}<\lim_{\epsilon\to 0}\gamma_{GL,p,\epsilon}(u,v)\left(=\sup_{\epsilon>0}\gamma_{GL,p,\epsilon}(u,v)\right),$$
then there exists a stationary $p$-harmonic map $w\in W^{1,p}(M,N)$ of energy
\begin{equation}
E_p(w)=\sup_{\epsilon>0}\gamma_{GL,p,\epsilon}(u,v).
\end{equation}
\end{prop}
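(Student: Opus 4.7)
The strategy is to deduce Theorem \ref{pharmthm} from Proposition \ref{pharmprop} by establishing the sandwich
\begin{equation}\label{sandwichplan}
\gamma_p^*(u,v)\;\leq\;\sup_{\epsilon>0}\gamma_{GL,p,\epsilon}(u,v)\;\leq\;\gamma_p(u,v).
\end{equation}
Granting (\ref{sandwichplan}), the hypothesis $\max\{E_p(u),E_p(v)\}<\gamma_p^*(u,v)$ of Theorem \ref{pharmthm} entails the hypothesis of Proposition \ref{pharmprop}, producing a stationary $p$-harmonic $w\in W^{1,p}(M,N)$ with $E_p(w)=\sup_{\epsilon>0}\gamma_{GL,p,\epsilon}(u,v)$, and the chain (\ref{sandwichplan}) places $E_p(w)$ inside the required window $[\gamma_p^*(u,v),\gamma_p(u,v)]$. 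The right-hand inequality in (\ref{sandwichplan}) is the already-noted (\ref{gleupper}): testing $\gamma_{GL,p,\epsilon}$ against continuous $W^{1,p}$-paths of $N$-valued maps, along which $E_{p,\epsilon}\equiv E_p$.

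The real content is the left-hand inequality of (\ref{sandwichplan}). Fix $\delta,\eta>0$; I will exhibit $\{u_i\}_{i=0}^r\in\mathcal{C}^p_\delta(u,v)$ with $\max_iE_p(u_i)\leq\sup_\epsilon\gamma_{GL,p,\epsilon}(u,v)+\eta$, which forces $\gamma_p^\delta(u,v)\leq\sup_\epsilon\gamma_{GL,p,\epsilon}(u,v)+\eta$ and the desired inequality upon letting $\delta,\eta\to 0$. For $\epsilon=\epsilon(\delta,\eta)>0$ sufficiently small, select a continuous path $\hat{u}^\epsilon:[0,1]\to W^{1,p}(M,\mathbb{R}^L)$ from $u$ to $v$ with
$$\max_{t\in[0,1]}E_{p,\epsilon}(\hat{u}^\epsilon_t)\;\leq\;\sup_{\epsilon>0}\gamma_{GL,p,\epsilon}(u,v)+\eta/3,$$
and, using $L^p$-continuity of $t\mapsto\hat{u}^\epsilon_t$, pick a partition $0=t_0<\cdots<t_r=1$ with $\|\hat{u}^\epsilon_{t_i}-\hat{u}^\epsilon_{t_{i-1}}\|_{L^p}<\delta/4$. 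Let $G_i:=\{x:\dist(\hat{u}^\epsilon_{t_i}(x),N)<\delta_N/2\}$ and $B_i:=M\setminus G_i$; the potential bound $\int F(\hat{u}^\epsilon_{t_i})=O(\epsilon^p)$ combined with $F\geq\delta_N^2/4$ on $B_i$ yields $|B_i|=O(\epsilon^p)$.

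The main obstacle is to replace each $\hat{u}^\epsilon_{t_i}$ (keeping $u_0=u$, $u_r=v$) by an $N$-valued map $u_i\in W^{1,p}(M,N)$ agreeing with the nearest-point projection $\pi_N\circ\hat{u}^\epsilon_{t_i}$ on $G_i$, with both $\|u_i-\hat{u}^\epsilon_{t_i}\|_{L^p}=o(1)$ and $E_p(u_i)\leq E_{p,\epsilon}(\hat{u}^\epsilon_{t_i})+o(1)$ as $\epsilon\to0$. The $G_i$-contribution is controlled by the near-isometry of $\pi_N$ on the $\delta_N/2$-tubular neighborhood, giving $E_p(u_i,G_i)\leq(1+C\delta_N)E_p(\hat{u}^\epsilon_{t_i},G_i)$. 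The extension across $B_i$ is effected by a cubeulation-and-retraction argument in the spirit of Section \ref{slices} and of \cite{HangLin}: choose by Fubini slicing a cubical complex $K_\rho$ of scale $\rho=\rho(\epsilon)$ on which $\hat{u}^\epsilon_{t_i}$ restricted to $|K^{k-1}_\rho|$ is both continuous and of controlled $p$-energy, project this $(k-1)$-skeletal trace into $N$ by $\pi_N$ where valid and by a bounded-energy $N$-valued replacement on the portion of skeleton meeting $B_i$ (whose $(k-1)$-measure is $o(1)$ as $\epsilon\to 0$), and extend to all higher cells via the radial retraction $\Phi_k$ as in Lemma \ref{philem}; the factor $(k-p)^{-1}$ appearing in (\ref{phienest}) multiplies a vanishing skeletal error, yielding the required $o(1)$ extension cost. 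With such $u_i$ in hand, the triangle inequality $\|u_i-u_{i-1}\|_{L^p}\leq\|\hat{u}^\epsilon_{t_i}-\hat{u}^\epsilon_{t_{i-1}}\|_{L^p}+\|u_i-\hat{u}^\epsilon_{t_i}\|_{L^p}+\|u_{i-1}-\hat{u}^\epsilon_{t_{i-1}}\|_{L^p}<\delta$ and the energy bound $\max_iE_p(u_i)\leq\sup_\epsilon\gamma_{GL,p,\epsilon}(u,v)+\eta$ follow for $\epsilon$ small, completing the proof.
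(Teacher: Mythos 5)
Your proposal does not prove Proposition \ref{pharmprop}; it proves a different statement. What you establish is the sandwich $\gamma_p^*(u,v)\leq\sup_{\epsilon>0}\gamma_{GL,p,\epsilon}(u,v)\leq\gamma_p(u,v)$, which in the paper is precisely the step used to \emph{deduce} Theorem \ref{pharmthm} \emph{from} Proposition \ref{pharmprop} (the right-hand inequality is (\ref{gleupper}); the left-hand one is (\ref{gllowbds}), proved via Lemma \ref{dcomplem}). Your plan explicitly takes Proposition \ref{pharmprop} as given. But the content of Proposition \ref{pharmprop} is an existence assertion: there is a \emph{stationary $p$-harmonic map} $w\in W^{1,p}(M,N)$ with $E_p(w)=\sup_{\epsilon>0}\gamma_{GL,p,\epsilon}(u,v)$. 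Nothing in your argument produces a critical point of anything — you only build discrete chains of $N$-valued maps with controlled energy and small $L^p$ gaps, which bounds a min-max value but yields no map realizing it.

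The missing proof has two ingredients, both absent from your proposal. First, one applies the mountain pass theorem to the $C^1$ functionals $E_{p,\epsilon}$ on the Banach space $W^{1,p}(M,\mathbb{R}^L)$, which satisfy a Palais--Smale condition; under the hypothesis $\max\{E_p(u),E_p(v)\}<\gamma_{GL,p,\epsilon}(u,v)$ (note: the hypothesis of Proposition \ref{pharmprop} involves $\sup_{\epsilon}\gamma_{GL,p,\epsilon}$, not $\gamma_p^*$ as in your write-up) this yields critical points $w_\epsilon$ with $E_{p,\epsilon}(w_\epsilon)=\gamma_{GL,p,\epsilon}(u,v)$ (Lemma \ref{glmpsols}). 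Second, since (\ref{gleupper}) gives uniform energy bounds as $\epsilon\to 0$, one invokes the compactness theorem of Wang (\cite{Wang}, Corollary B) to extract a subsequence $w_{\epsilon_j}$ converging \emph{strongly} to a stationary $p$-harmonic map $w$ with $E_p(w)=\lim_{\epsilon\to 0}\gamma_{GL,p,\epsilon}(u,v)$. This second step is where the restriction $p\in(1,k)\setminus\mathbb{N}$ is actually used — your proposal never uses it, which is itself a signal that the argument is aimed at the wrong target. As a side remark, your cubeulation-and-retraction construction for projecting almost-$N$-valued maps back into $N$ is a plausible, more explicit alternative to the paper's soft compactness-and-contradiction argument in Lemma \ref{dcomplem}, so that portion could be salvaged as part of a proof of Theorem \ref{pharmthm}; but it is not a proof of the proposition at hand.
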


\hspace{3mm} From (\ref{gleupper}), it's clear that the maps $w$ obtained in Proposition \ref{pharmprop} satisfy $$E_p(w)\leq \gamma_p(u,v).$$
Thus, to complete the proof of Theorem \ref{pharmthm}, it remains only to establish the lower bound
\begin{equation}\label{gllowbds}
\gamma_p^*(u,v)\leq \sup_{\epsilon>0}\gamma_{GL,p,\epsilon}(u,v).
\end{equation}
This will follow fairly easily from the definition of $\gamma_p^*(u,v)$ and the following easy lemma.

\begin{lem}\label{dcomplem} For every $\eta>0$, there exists some $\epsilon_0(p,\eta)>0$ such that if $\epsilon<\epsilon_0$ and $w\in W^{1,p}(M,\mathbb{R}^L)$ satisfies
$$E_{p,\epsilon}(w)<\gamma_p(u,v)+1,$$
then there exists $w'\in W^{1,p}(M,N)$ such that 
$$\|w-w'\|_{L^p}<\eta$$
and
$$E_p(w')\leq E_{p,\epsilon}(w)+\eta.$$
\end{lem}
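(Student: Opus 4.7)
The plan is to argue by contradiction, exploiting the natural compactness and lower-semicontinuity properties of the generalized Ginzburg--Landau functionals $E_{p,\epsilon}$ (the $\Gamma$-liminf for GL). Suppose the lemma fails; then there exist $\eta>0$ and sequences $\epsilon_j\downarrow 0$ and $w_j\in W^{1,p}(M,\mathbb{R}^L)$ with $E_{p,\epsilon_j}(w_j)<\gamma_p(u,v)+1$, such that no $w'\in W^{1,p}(M,N)$ satisfies both $\|w_j-w'\|_{L^p}<\eta$ and $E_p(w')\leq E_{p,\epsilon_j}(w_j)+\eta$. The goal is to produce, by compactness, a single $N$-valued limit $w_\infty$ that works as such a $w'$ for all sufficiently large $j$, contradicting this hypothesis.

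First I would establish a uniform $W^{1,p}$ bound on $\{w_j\}$. The gradient bound $\int_M|dw_j|^p\leq E_{p,\epsilon_j}(w_j)<\gamma_p+1$ is immediate. For a uniform $L^1$ bound, fix $R_0$ large enough that $N\subset B_{R_0/2}(0)\subset\mathbb{R}^L$; since $F(x)=|x|$ for $|x|>R_0$, the potential term in $E_{p,\epsilon_j}$ yields
$$\int_{\{|w_j|>R_0\}}|w_j|\leq \epsilon_j^p\,E_{p,\epsilon_j}(w_j)<\epsilon_j^p(\gamma_p+1),$$
while $|w_j|\leq R_0$ on the complement, so $\|w_j\|_{L^1(M)}$ is uniformly bounded. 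Combined with the gradient bound, Poincar\'e--Wirtinger on the closed manifold $M$ yields a uniform bound on $\|w_j\|_{W^{1,p}(M)}$.

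With this bound, Rellich--Kondrachov allows me to pass to a subsequence (unrelabelled) along which $w_j\to w_\infty$ strongly in $L^p(M,\mathbb{R}^L)$, weakly in $W^{1,p}$, and pointwise a.e.; I may further arrange that $E_{p,\epsilon_j}(w_j)\to L:=\liminf_j E_{p,\epsilon_j}(w_j)$. The potential control $\int_M F(w_j)\leq \epsilon_j^p(\gamma_p+1)\to 0$ together with Fatou's lemma forces $\int_M F(w_\infty)=0$; since $F\geq 0$ vanishes precisely on $N$ (it equals $\dist(\cdot,N)^2$ inside the tubular neighborhood and is bounded below by $\delta_N^2$ outside), $w_\infty(x)\in N$ a.e., so $w_\infty\in W^{1,p}(M,N)$.

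Finally, weak $W^{1,p}$ lower-semicontinuity of the convex functional $w\mapsto\int_M|dw|^p$ gives $E_p(w_\infty)\leq \liminf_j\int_M|dw_j|^p\leq L$. For $j$ large enough, $\|w_j-w_\infty\|_{L^p}<\eta$ and $E_{p,\epsilon_j}(w_j)>L-\eta/2$, so $E_p(w_\infty)\leq L<E_{p,\epsilon_j}(w_j)+\eta$; the choice $w':=w_\infty$ then contradicts the failure hypothesis. The main obstacle is the uniform $W^{1,p}$ bound in the first step: without the outer-growth condition $F(x)=|x|$ for $|x|>R_0$, there would be no barrier against $L^1$ concentration of the $w_j$ at infinity. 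Once that bound is in hand, every remaining ingredient is standard compactness and convex lower-semicontinuity.
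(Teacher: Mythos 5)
Your argument is correct and is essentially the same contradiction-by-compactness proof given in the paper: extract a weakly $W^{1,p}$-convergent (and strongly $L^p$-convergent) subsequence, use $\int_M F(w_j)\to 0$ to conclude the limit is $N$-valued, and apply weak lower semicontinuity of the $p$-energy. Your explicit derivation of the uniform $W^{1,p}$ bound from the linear growth $F(x)=|x|$ for $|x|>R_0$ is a detail the paper leaves implicit, but the overall route is identical.
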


\begin{proof} This is another simple proof by contradiction via compactness. If, for some $\eta>0$, no such $\epsilon_0(p,\eta)$ existed, then we could find a sequence $\epsilon_j\to 0$ and $w_j\in W^{1,p}(M,\mathbb{R}^L)$ such that
$$\limsup_{j\to\infty}E_{p,\epsilon_j}(w_j)\leq \gamma_p(u,v)+1<\infty$$
and for every $j$ and $w'\in W^{1,p}(M,N)$, either
\begin{equation}\label{contracond}
\|w_j-w'\|_{L^p}>\eta\text{ or }E_p(w')>E_{p,\epsilon}(w_j)+\eta.
\end{equation}
But, passing to a further subsequence, we can find $w\in W^{1,p}(M,\mathbb{R}^L)$ for which $w_j\rightharpoonup w$ in $W^{1,p}$ and 
\begin{equation}\label{lpcontra}
\|w_j-w\|_{L^p}\to 0.
\end{equation}
Since the energies $E_{p,\epsilon_j}(w_j)$ are uniformly bounded as $\epsilon_j\to 0$, we see that
$$\lim_{j\to\infty}\int_M F(w_j)=0,$$
and consequently $w\in W^{1,p}(M,N)$. And of course, it follows from the weak convergence that
\begin{equation}
E_p(w)\leq \liminf_{j\to\infty}E_{p,\epsilon}(w_j),
\end{equation}
which, together with (\ref{lpcontra}), contradicts (\ref{contracond}).
\end{proof}

\hspace{3mm} Now, for any $\delta>0$, choose $\epsilon_0=\epsilon_0(p,\delta/3)$ according to Lemma \ref{dcomplem}, and for $\epsilon<\epsilon_0$, consider a path $w_t\in W^{1,p}(M,\mathbb{R}^L)$ connecting $w_0=u$ to $w_1=v$, such that
$$\max_{t\in [0,1]}E_{p,\epsilon}(w_t)\leq \gamma_{GL,p,\epsilon}(u,v)+\delta.$$
Select a sequence of times $0=t_0<t_1<\cdots<t_r=1$ such that 
\begin{equation}\label{wilpdiff}
\|w_{t_i}-w_{t_{i-1}}\|_{L^p}<\frac{\delta}{3},
\end{equation}
and for each $1\leq i\leq r-1$, appeal to Lemma \ref{dcomplem} to find a map $u_i\in W^{1,p}(M,N)$ such that
\begin{equation}\label{uiwilpdiff}
\|u_i-w_{t_i}\|_{L^p}<\frac{\delta}{3}
\end{equation}
and
\begin{equation}\label{uiglbd}
E_p(u_i)\leq \gamma_{GL,p,\epsilon}(u,v)+2\delta.
\end{equation}
It follows from (\ref{wilpdiff}) and (\ref{uiwilpdiff}) that the sequence
$$u=u_0,u_1,\ldots,u_{r-1},u_r=v$$
belongs to $\mathcal{C}_{\delta}^p(u,v)$, and from (\ref{uiglbd}), we therefore see that
\begin{equation}
\gamma_p^{\delta}(u,v)\leq \gamma_{GL,p,\epsilon}(u,v)+2\delta.
\end{equation}
In particular, we've now shown that
$$\gamma_p^{\delta}(u,v)\leq \sup_{\epsilon>0}\gamma_{GL,p,\epsilon}(u,v)+2\delta$$
for every $\delta>0$. Taking the limit as $\delta\to 0$, we arrive at the desired lower bound (\ref{gllowbds}); together with (\ref{gleupper}) and Proposition \ref{pharmprop}, this completes the proof of Theorem \ref{pharmthm}.

\section{Appendix}
\subsection{Remarks on the Proof of Lemma \ref{slicelemma}}\label{slicelemapp}\hspace{30mm}

\hspace{3mm} Here, we provide a few comments on how Lemma \ref{slicelemma} follows from the arguments of (\cite{HangLin}, Section 3 and 4; see also \cite{Hang}, Section 2).

\hspace{3mm} To begin, we fix some (piecewise) smooth cubeulation $h:|K|\to M$ of $M$ (following, for instance, the construction in \cite{Wh1}), where $K$ is a cubical complex all of whose faces are isometric to $[-1,1]^n$, and 
$$Lip(h)+Lip(h^{-1})\leq C(M).$$
We remark that it is enough to prove Lemma \ref{slicelemma} for rational scales $\delta=\frac{1}{m}$, and henceforth (as in \cite{Hang}) we restrict ourselves to this case. Beginning from the initial cubeulation $K=K_1$ above, we can then subdivide each $n$-cell into $m^n$ copies of $[-\delta,\delta]^n$, to obtain a new complex $K_{\delta}$ with the same underlying space $|K_{\delta}|=|K|$ as the initial one.

\hspace{3mm} Now, consider an isometric embedding $M\subset \mathbb{R}^L$ into a high-dimensional Euclidean space, and fix $\epsilon(M)>0$ such that the nearest point projection $\Pi_M$ onto $M$ is well-defined and smooth on the $\epsilon(M)$-neighborhood of $M$ in $\mathbb{R}^L$. As in \cite{HangLin}, define the map
$$H: |K|\times B_{\epsilon}^L(0)\to M$$
by setting
$$H(x,\xi):=\Pi_M(h(x)+\xi).$$
By choosing $\epsilon(M)$ sufficiently small, we can then arrange that
\begin{equation}\label{obvlipest}
\|H\|_{Lip}\leq C(M),
\end{equation}
and the maps 
$$h_{\xi}:=H(\cdot,\xi):|K|\to M$$
are invertible, with
\begin{equation}\label{hxibilip}
Lip(h_{\xi})+Lip(h_{\xi}^{-1})\leq C(M).
\end{equation}
Moreover, we can arrange that the Jacobian determinant
$$JH_{j,\delta}(x,\xi):=\det(DH_{j,\delta}(x,\xi)\circ [DH_{j,\delta}(x,\xi)]^*)^{1/2}$$
of the restriction $H_{j,\delta}:=H|_{|K^j_{\delta}|}$ of $H$ to the $j$-skeleton $K^j_{\delta}$ has a uniform lower bound
\begin{equation}\label{jaclbd}
JH_{j,\delta}(x,\xi)\geq C(M)^{-1}>0.
\end{equation}

\hspace{3mm} Next, as in Section 4 of \cite{HangLin}, fix $y\in M$, and consider the map
$$\psi:|K|\times\{\xi \in \mathbb{R}^L\mid \xi\perp T_yM,\text{ }|\xi|\leq \epsilon(M)\}\to |K|\times \mathbb{R}^L$$
mapping the product of $|K|$ with the normal disk $D_{\epsilon}^{\perp}(y)$ to $M$ at $y$ to $|K|\times \mathbb{R}^L$ by
$$\psi(x,y):=(x,y+\xi-h(x)).$$
For any subset $A\subset |K|$, we then observe that
$$H^{-1}(y)\cap A\subset \psi(A).$$
In particular, for the skeleta $|K^j_{\delta}|$ of $K_{\delta}$, it follows that
\begin{equation}\label{preimest}
\mathcal{H}^{L-n+j}(H^{-1}(y)\cap |K^j_{\delta}|)\leq \mathcal{H}^{L-n+j}(\psi(|K^j_{\delta}|\times D_{\epsilon}^{\perp}(y))\leq C(M)\delta^{j-n},
\end{equation}
where in the last inequality we have used the area formula for the map $\psi$ together with the simple estimate $\mathcal{H}^j(|K^j_{\delta}|)\leq C(K_1)\delta^{j-n}$ (since $K^j_{\delta}$ comprises $C(K_1)\delta^{-n}$ $j$-cells of size $\delta$).

\hspace{3mm} Armed with the estimates (\ref{obvlipest})-(\ref{preimest}), one can now employ the coarea formula and argue exactly as in Section 3 of \cite{HangLin} to conclude the proof of Lemma \ref{slicelemma}.

\subsection{Upper Bounds for $\gamma_p(u,v)$ from the Hang-Lin Construction}\label{upperbds}\hspace{30mm}

\hspace{3mm} We recall now the construction of \cite{HangLin} (cf. also \cite{BL} in the case that either $u$ or $v$ is constant), and explain how it leads immediately to a proof of Theorem \ref{ubdsthm}

\begin{prop}\label{goodpath}\emph{(cf. \cite{HangLin})} Let $u,v\in C^{\infty}(M,N)$ be $(k-2)$-homotopic for some $k\leq n=dim(M)$. Then there is a path of maps $t\mapsto u_t$ with $u_0=u$, $u_1=v$, continuous in $W^{1,p}(M,N)$ for every $1\leq p<k$, such that
\begin{equation}
\sup_{t\in [0,1]}E_p(u_t)\leq \frac{C}{k-p}
\end{equation}
for some $C$ independent of $p$.
\end{prop}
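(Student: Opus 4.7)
The proof amounts to a careful energy audit of the path construction in Hang–Lin's proof of (\cite{HangLin}, Theorem 1.1), extracting the explicit $p$-dependence at each stage. Fix a cubeulation $h : |K| \to M$ at unit scale adapted to both $u$ and $v$ via Lemma \ref{slicelemma}. Since $u, v$ are smooth, the skeletal $p$-energies $\int_{|K^j|}|d(u\circ h)|^p$ and $\int_{|K^j|}|d(v\circ h)|^p$ are bounded uniformly in $p \in [1,k)$ by a constant depending only on $u$ and $v$. The path $u_t$ is constructed in three concatenated phases, each of which will be shown to contribute at most $\frac{C(u,v)}{k-p}$ to the maximum $p$-energy.

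Phase A retracts $u$ to the ``cellular'' map $\tilde u := u|_{|K^{k-1}|} \circ \Phi_k \circ h^{-1}$ (and symmetrically $v$ to $\tilde v$) by concatenating the one-parameter families $\phi_{j,s}$ of Lemma \ref{philem} as $j$ decreases from $n$ to $k$ and $s$ decreases from $1$ to $0$ at each stage. Continuity of the resulting path in $W^{1,p}(M,N)$ follows from Lemma \ref{philem}, while the inductive estimates used in the proof of Lemma \ref{wkapplem} bound the $p$-energy of every intermediate map by $\frac{C(M) E_p(u)}{k-p} \leq \frac{C(u,v)}{k-p}$.

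Phase B aligns $\tilde u$ and $\tilde v$ on the $(k-2)$-skeleton. Invoking the $(k-2)$-homotopy hypothesis (refining $K$ if needed), fix a Lipschitz homotopy $H_s : |K^{k-2}| \to N$ between $u|_{|K^{k-2}|}$ and $v|_{|K^{k-2}|}$ and extend it to a path $G_s : |K^{k-1}| \to N$ of Lipschitz maps by coning the boundary deformation on each $(k-1)$-cell. Lifting via $\Phi_k \circ h^{-1}$ and applying Lemma \ref{wkapplem} gives a continuous path in $W^{1,p}(M,N)$ whose $p$-energy is bounded by $\frac{C(M)}{k-p}$ times the uniformly bounded $p$-energy of $G_s$ on $|K^{k-1}|$. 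Phase C then handles the remaining cell-by-cell interpolation on $(k-1)$-cells: on each $(k-1)$-cell $\sigma$, the pair $(G_1|_\sigma, v|_\sigma)$ agrees on $\partial\sigma$ and so defines a class $[g_\sigma] \in \pi_{k-1}(N)$. If $[g_\sigma] = 0$ the interpolation on $\sigma$ is straightforward; in general, exploit the adjacent $k$-cells of $K$: inside a $k$-cell $\tau$ having $\sigma$ as a face, construct a one-parameter family of maps that realizes the class $[g_\sigma]$ as a radial ``bubble'' transported from $\sigma$ across $\tau$ as $t$ runs from $0$ to $1$, so that at $t=1$ the value on $\sigma$ is $v|_\sigma$.

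The main obstacle is the bubble construction in Phase C: one must exhibit an explicit one-parameter family of $W^{1,p}$ maps on $\tau$ with $p$-energy uniformly bounded by $\frac{C}{k-p}$, interpolating between two radial extensions that differ on a single face by a fixed class in $\pi_{k-1}(N)$. The key computation underpinning the $\frac{1}{k-p}$ rate is the elementary identity
\[
\int_0^1 r^{k-1-p}\, dr \;=\; \frac{1}{k-p},
\]
which gives the $p$-energy of the radial extension to a $k$-ball of a fixed boundary map $S^{k-1}\to N$. Only finitely many classes $\gamma\in\pi_{k-1}(N)$ arise across the cells of $K$ (by compactness of $|K^{k-1}|$ and smoothness of $u$, $v$), so one may fix a Lipschitz representative $\omega_\gamma: S^{k-1}\to N$ for each and realize the bubbles by radial precomposition of $\omega_\gamma$ with a varying-scale radial map, giving a constant independent of $p$. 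Concatenating the three phases and summing the contributions over the finitely many cells of $K$ then yields a path $u_t$ with $\sup_t E_p(u_t) \leq \frac{C(u,v)}{k-p}$, as required.
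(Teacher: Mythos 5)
Your Phases A and B match the paper closely: retract $u$ and $v$ to cellular maps via the families $\phi_{j,s}$ (with the $\frac{1}{k-p}$ coming from $\int_s^1 r^{j-1-p}\,dr$), and then use the $(k-2)$-homotopy together with the homotopy extension property of the pair $(|K^{k-1}|,|K^{k-2}|)$ to replace $u|_{|K^{k-1}|}$ by a Lipschitz map $u_2$ that agrees with $v$ on $|K^{k-2}|$ — the paper phrases this directly via HEP rather than by coning a $(k-2)$-skeleton homotopy, but these are the same idea. Phase C is where you and the paper diverge.

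For the cell-by-cell interpolation on $(k-1)$-cells, the paper sidesteps the ``bubble transport'' construction you propose. Fixing the $(k-1)$-cell $\sigma_i$ on which $f_{i-1}$ and $f_i$ disagree, it considers the star neighborhood $V$ of $\sigma_i$ in $|K^k|$ (the union of all $k$-cells having $\sigma_i$ as a face). Since $f_{i-1}=f_i$ off $\sigma_i$, the two maps restrict to a common Lipschitz map $g$ on $\partial V$. One then deforms $w_{i-1}=f_{i-1}\circ\phi_{k,0}$ to the radial extension $g(x/|x|_\infty)$ on $V$ by the same scaling trick used in Lemma \ref{philem}, and then back out to $w_i$. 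This is uniform in the sense that it never refers to $\pi_{k-1}(N)$, requires no case split on whether $[g_\sigma]$ is trivial, and needs no choice of representatives $\omega_\gamma$ — and the energy estimate is again just the $\int_0^1 r^{k-1-p}\,dr=\frac{1}{k-p}$ computation applied cell-wise. Your bubble-transport picture is intuitively correct, but it hides two points that the star-neighborhood retraction handles automatically: (i) $\sigma$ is a face of several $k$-cells $\tau$, and the boundary data on each $\partial\tau$ changes during the transport, so one must coherently update the radial extension on all of them simultaneously, not just on a single $\tau$ carrying the bubble; and (ii) one must verify $W^{1,p}$-continuity of the resulting path, which is more delicate for a moving singularity than for a pure scaling deformation. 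Making your version rigorous is possible but is genuine extra work; the paper's ``collapse to $\partial V$ and re-expand'' move is the cleaner route and is the one it actually takes.
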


\begin{proof} To begin, fix a smooth cubeulation $h:|K|\to M$, where $K$ is a cubical complex built of $n$-cells isometric to $[-1,1]^n$. In what follows, we will frequently identify $M$ and $|K|$ without comment. Since we've taken $u$ and $v$ to be smooth, note that the restrictions $u|_{|K^j|}$ and $v_{|K^j|}$ of $u$ and $v$ to the lower-dimensional skeleta of $K$ define Lipschitz maps from $|K^j|$ to $N$.

\hspace{3mm} Recalling the terminology of Section \ref{slices}, we observe now that there exists a path of maps $u_t$ connecting $u$ to $u\circ \Phi_k$, such that $t\mapsto u_t$ is continuous in $W^{1,p}(M,N)$ for each $p<k$, with the desired energy bounds. Indeed, it follows directly from Lemma \ref{philem} that the path
$$[0,1]\ni s\mapsto u\circ \phi_{n,s}$$
connecting $u\circ \Phi_n$ to $u$ satisfies the desired properties, as do the paths
$$[0,1]\ni s\mapsto u\circ \phi_{j,s}\circ\Phi_{j+1}$$
connecting $u\circ \Phi_{j+1}$ to $u\circ \Phi_j$ for each $k\leq j\leq n-1$. Concatenation yields the desired path $u_t$ from $u$ to $u\circ \Phi_k$, and in the same way we can construct such a path connecting $v$ to $v\circ \Phi_k$. 

\hspace{3mm} It remains now to construct a path of maps $u_t$ from $M$ to $N$ connecting $u\circ\Phi_k$ to $v\circ \Phi_k$, in such a way that $t\mapsto u_t$ is continuous in $W^{1,p}(M,N)$ and 
$$\max_t E_p(u_t)\leq \frac{C}{k-p}$$
for every $1\leq p<k$. In fact, it is enough to construct such a path of maps
$$w_t:|K^k|\to N$$
connecting $u\circ \phi_{k,0}$ to $v\circ \phi_{k,0}$ on the $k$-skeleton $|K^k|$, since we can then take $u_t:=w_t\circ \Phi_{k+1}$ to obtain the desired path of maps on $M$. In the remainder of the proof, we construct such a path $w_t:|K^k|\to N$.

\hspace{3mm} Since the maps $u$ and $v$ are $(k-2)$-homotopic, their restrictions $u|_{|K^{k-2}|}$, $v|_{|K^{k-2}|}$ to the $(k-2)$-skeleton $|K^{k-2}|$ are homotopic, by definition. And since the pair $(|K^{k-1}|,|K^{k-2}|)$ satisfies the homotopy extension property (cf. \cite{HangLin}, Proposition 2.1), we can therefore find a map 
$$u_2:|K^{k-1}|\to N$$
such that $u|_{|K^{k-1}|}$ is homotopic to $u_2$ on $|K^{k-1}|$ and $u_2$ agrees with $v$ 
$$u_2|_{|K^{k-2}|}=v|_{|K^{k-2}|}$$
on $|K^{k-2}|$. Moreover, it's easy to check (cf. \cite{HangLin}, Sections 2.2-2.3) that we can take both the map $u_2$ and the homotopy $f: |K^{k-1}|\times [0,1]\to N$ from $u|_{|K^{k-1}|}$ to $u_2$ to be Lipschitz. The precomposition $f_t\circ \Phi_k$ of the homotopy $f_t$ with $\Phi_k$ then gives us a path of maps $M\to N$ connecting $u\circ \Phi_k$ to $u_2\circ\Phi_k$, which evidently satisfies the desired estimates and continuity properties in $W^{1,p}(M,N)$ for $1\leq p<k$. 

\hspace{3mm} In particular, to complete the proof of the proposition, we now see that it suffices to construct a path of maps $w_t:|K^k|\to N$, continuous in $W^{1,p}(|K^k|,N)$ for $1\leq p<k$, satisfying
$$\max_t E_p(w_t,|K^k|)\leq \frac{C}{k-p},$$
that connects $v\circ \phi_{k,0}$ to $u_2\circ \phi_{k,0},$ where $u_2\in Lip(|K^{k-1}|,N)$ agrees with $v$ on the $(k-2)$-skeleton $|K^{k-2}|$. To do this, we enumerate the $(k-1)$-cells $\sigma_1,\ldots,\sigma_m\in K^{k-1}\setminus K^{k-2}$, and define maps $w_0,\ldots,w_m\in W^{1,p}(|K^k|,N)$ by
$$w_i:=f_i\circ \phi_{k,0},$$
where the maps $f_i\in Lip(|K^{k-1}|,N)$ are defined by $f_0=v|_{|K^{k-1}|},$ $f_m=u_2$, and
$$f_i(x):=v(x)\text{ for }x\in |K^{k-1}|\setminus (\sigma_1\cup\cdots\cup \sigma_i),$$
$$f_i(x):=u_2(x)\text{ for }x\in \sigma_1\cup\cdots\cup\sigma_i,$$
for $1\leq i\leq m$. (That these $f_i$ are Lipschitz follows from the fact that $u_2=v$ on $|K^{k-2}|$.) We claim that each $w_i$ can be deformed into $w_{i+1}$ through a path of maps $w_t$ satisfying the desired properties; once we have constructed these paths, concatenation evidently gives the desired path from $v\circ\phi_{k,0}$ to $u_2\circ\phi_{k,0}$.

\hspace{3mm} Now, fix $i\in \{1,\ldots,m\}$. By construction, the maps $f_{i-1},f_i\in Lip(|K^{k-1}|,N)$ coincide on the complement $|K^{k-1}|\setminus \sigma_i$ of the $(k-1)$-cell $\sigma_i$. Consider the star neighborhood 
$$V:=\bigcup\{\Delta \in K^k\text{ a }k\text{-cell}\mid \sigma_i\subset \partial \Delta\}$$
of $\sigma_i$, which we can identify in a bi-Lipschitz way with
$$W=\bigcup_{j=1}^aW_j\subset \mathbb{R}^{k-1+a},$$
where
$$W_j:=\{(x,0,\ldots,0)+te_{k-1+j}\mid x\in [-1,1]^{k-1},\text{ }0\leq t\leq 1\},$$
and $a$ is simply the number of distinct $k$-cells for which
$$\sigma\cong [-1,1]^{k-1}\times \{0\}\subset \mathbb{R}^{k-1+a}$$
is a face. 

\hspace{3mm} Next, note that the boundary
$$\partial V\cong W\cap \partial [-1,1]^{k-1+a}$$
lies in $|K^{k-1}|\setminus \sigma_i$, so that the maps
$$f_i=f_{i-1}=:g\in Lip(\partial V,N)$$
agree on $\partial V$. For $t\in [0,\frac{1}{2}]$, we can then define maps $w_{i-1+t}:|K^k|\to N$ by setting
$$w_{i-1+t}:=w_{i-1}=w_i\text{ on }|K^k|\setminus V,$$
and (identifying $V$ with $W$)
$$w_{i-1+t}(x):=w_{i-1}\left(\frac{x}{\max\{(1-2t),|x|_{\infty}\}}\right)\text{ for }x\in V.$$
We can then check by direct computation, as in the proof of Lemma \ref{philem}, that $[0,\frac{1}{2}]\ni t\mapsto w_{i-1+t}$ satisfies the desired energy estimates and continuity properties, while connecting $w_{i-1}$ to the map $w_{i-0.5}$ given by
$$w_{i-0.5}:=w_i\text{ on }|K^k|\setminus V$$
and
$$w_{i-0.5}(x):=g(x/|x|_{\infty})\text{ for }x\in V.$$
Since $w_i|_{\partial V}=g$ as well, we can employ the same construction to obtain a path 
$$[\frac{1}{2},1]\ni t\mapsto w_{i-1+t}$$
connecting $w_{i-0.5}$ to $w_i$ in the desired way. We have thereby constructed a path $[0,1]\ni t\mapsto w_{i-1+t}:|K^k|\to N$ from $w_{i-1}$ to $w_i$ satisfying the desired estimates, completing the proof.
\end{proof}

\end{document}